\newcommand {\1}    {{\bf 1}}
\newcommand     {\sd}   {{\mathbb n}}
\newcommand     {\CD}   {{\C[\partial]}}
\newcommand     {\Cd}   {{\C[\partial]}}
\newcommand     {\C}    {{\mathbb C}}
\newcommand     {\N}    {{\mathbb N}}
\newcommand     {\Z}    {{\mathbb Z}}
\newcommand     {\Fou}    {{\mathcal F}}
\newcommand     {\ad}   {\mbox{ad\,}}
\newcommand     {\End}  {\mbox{End}}
\newcommand     {\lie} {{\mbox{\scriptsize {\it Lie}}}}
\newcommand     {\Tor}  {{\mbox{\it Tor }}}
\newcommand     {\open} {``}
\newcommand     {\close}{''}
\renewcommand     {\tt}   {\otimes}
\DeclareMathOperator{\gc}{gc}
\DeclareMathOperator{\id}{id}
\DeclareMathOperator{\chom}{Chom}
\DeclareMathOperator{\cend}{Cend}
\DeclareMathOperator{\Nil}{Nil}
\DeclareMathOperator{\spn}{span}
\DeclareMathOperator{\Hom}{Hom}
\DeclareMathOperator{\Aut}{Aut}
\newtheorem{thm}{Theorem}[section]
\newtheorem{lemma}{Lemma}[section]
\newtheorem{prop}{Proposition}[section]
\newtheorem{cor}{Corollary}[section]
\theoremstyle{definition}
\newtheorem{ex}{Example}[section]
\newtheorem{rem}{Remark}[section]
\newcommand{\kk}    {\mathbf{k}}       % a field
\newcommand{\dd}    {\mathfrak{d}}
\newcommand{\mb}[1]{\mathbb #1}
\newcommand{\mfr}[1]{\mathfrak #1}
\newcommand{\ot}[2]{ #1 \otimes #2}
\newcommand{\oH}[1]{{\otimes}_{H} #1}
\newcommand{\pbw}{\frac{ {{\partial}_{1}}^{i_1} \dots {{\partial}_{N}}^{i_N}}{i_1 ! \dots i_N !}}
\newcommand{\buni}[1]{{\partial}^{(#1)}}
\begin{document}

\title{A root space decomposition for finite vertex algebras}
\author[A.~D'Andrea]{Alessandro D'Andrea}
\author[G.~Marchei]{Giuseppe Marchei}
\thanks{The first author was supported by AST fundings from ``La Sapienza'' University.}
\address{Dipartimento di Matematica, Universit\`a degli Studi di
Roma ``La Sapienza''\\ P.le Aldo Moro, 5 -- 00185 Rome, Italy}
\email{dandrea@mat.uniroma1.it}

\begin{abstract}
Let $L$ be a Lie pseudoalgebra, $a \in L$. We show that, if $a$ generates a (finite) solvable subalgebra $S = \langle a \rangle \subset L$, then one may find a lifting $\bar a \in S$ of $[a] \in S/S'$ such that
$\langle \bar a \rangle$ is nilpotent.

We then apply this result towards vertex algebras: we show that every finite vertex algebra $V$ admits a decomposition into a semi-direct product $V = U \sd N$, where $U$ is a subalgebra of $V$ whose underlying Lie conformal algebra $U^\lie$ is a nilpotent self-normalizing subalgebra of $V^\lie$, and $N = V^{[\infty]}$ is a canonically determined ideal contained in the nilradical $\Nil V$.
\end{abstract}
\maketitle
\tableofcontents

\section{Introduction}
Let $H$ be a Hopf algebra. One may make \cite{BD} the class of left $H$-modules into a pseudotensor category $\mathcal M^*(H)$ in a non-standard way; an algebra object in $\mathcal M^*(H)$ is then a {\em pseudoalgebra} over $H$.

In \cite{BDK1}, the notion of Lie pseudoalgebra over a cocommutative Hopf algebra was introduced and studied. Not surprisingly, the study of finite Lie pseudoalgebras amounts to investigating commutator properties of families of {\em pseudolinear endomorphisms}. When $M$ is a finite (i.e., finitely generated) left $H$-module, the space of all pseudolinear endomorphisms of $M$ can be given a natural Lie pseudoalgebra structure, denoted by $\gc M$.

One of the Lie theoretic features of the pseudo-version of linear algebra is that an analogue of Lie Theorem holds. However, a pseudolinear endomorphism may fail to self-commute; as a consequence, not all pseudolinear endomorphisms can be made to \open stabilize a flag.\close\,
This can be made precise: if $f \in \gc M$, where $M$ is a finite $H$-module, then $f$ can be put in {\em upper triangular form} if and only if $f$ generates a solvable subalgebra $\langle f \rangle$ of the Lie pseudoalgebra $\gc M$; moreover, its action decomposes $M$ into a direct sum of generalized eigenspaces if and only if $\langle f \rangle$ is nilpotent. There are examples of $f$ such that $\langle f \rangle$ is not solvable. One may also choose $f$ so that $\langle f \rangle$ is solvable but not nilpotent.\\

The first part of this paper is devoted to showing that whenever $S = \langle f \rangle \subset \gc M$ is solvable, $f$ is not too far from generating a nilpotent subalgebra of $\gc M$. More precisely, one may always find $\bar f \equiv f \mod S'$ such that $\langle \bar f \rangle$ is nilpotent. Therefore, even though $f$ may fail to decompose $M$ into a direct sum of generalized eigenspaces, a (non-unique) suitable {\em modification} of $f$ certainly does. We expect this fact to be useful towards the study of some class of subalgebras of the Lie pseudoalgebra $\gc M$, where $M$ is a finite $H$-module, e.g., subalgebras of $\gc M$, all of whose nonzero subalgebras contain a nonzero self-commuting element.

In the second half of the paper, we employ this fact towards characterizing finite vertex algebras by studying the adjoint representation of the underlying Lie conformal algebra. Vertex algebras that are of interest in physics are very large objects, and are typically graded vector spaces of superpolynomial growth. It is well known that finite-dimensional vertex algebras collapse to differential commutative algebra structures; however, infinite-dimensional examples of low growth are less well understood.
One of the authors showed in \cite{nilpo} that all finite vertex algebras $V$ possess a solvable underlying Lie conformal algebra $V^\lie$, and that all generalized weight space (of nonzero weight) with respect to the adjoint action of any subalgebra of $V^\lie$ are nil-ideals of the vertex algebra structure.

A more precise description can be obtained by mimicking the root space decomposition technique in this new setting: if $V$ is a finite vertex algebra, choose a generic element in $V^\lie$, and modify it so that it generates a nilpotent subalgebra of $V^\lie$. Then, decompose $V$ into direct sum of generalized weight spaces. All nonzero weights result in abelian vertex ideals, whereas the generalized $0$-weight space is a vertex subalgebra $U$ of $V$ with the property that $U^\lie$ is nilpotent and self-normalizing in $V^\lie$, i.e., it is a {\em Cartan subalgebra} of $V$. Then $V$ decomposes into a semidirect product of $U$ with a canonically determined abelian ideal $N$; namely, $N$ is the ideal of $V$ on which the central series of $V^\lie$ stabilizes. Finally, we show by an explicit example that $N$ may fail to vanish. This shows that there exist finite vertex algebras whose underlying Lie conformal algebra is not nilpotent.

The general philosophy is that vertex algebras naturally tend to be very large objects. Because of this, the algebraic requirement that finitely many quantum fields close, up to $\CD$-linear combination, under normally ordered product and $\lambda$-bracket, forces some form of nilpotence on the structure; we describe the exact form of this nilpotence in Theorem \ref{fVAdec}.

\section{Preliminaries on Lie pseudoalgebras}
In this paper we will work over an algebraically closed field $\kk$ of zero characteristic. Unless otherwise specified, all vector spaces, linear maps and tensor product will be considered over $\kk$.
\subsection{Hopf algebras and Lie pseudoalgebras}
Let $H$ be a cocommutative Hopf algebra \cite{Sw} with coproduct $\Delta$, $\Delta(h) =  \ot {h_{(1)}}{h_{(2)}}$, counit $\epsilon$ and antipode $S$.

The tensor product $H \tt H$ can be made into a right $H$-module by $\alpha. h = \alpha \Delta(h),$ where $\alpha \in H \tt H, h \in H$. If $L$ is a left $H$-module, it makes then sense to consider $(H \tt H)\oH L$, along with its natural left $H \tt H$-module structure.

A \emph{Lie pseudoalgebra} over $H$ is a (left) $H$-module $L$ together with a \emph{pseudobracket}, i.e., an $\ot
HH$-linear map
\begin{equation*}%\label{pseudo}
\begin{array}{cccc}
[\,\, \ast \,\,] : & L \otimes L & \longrightarrow & {(H \otimes H)} {\otimes}_{H} L\\
\quad & a \otimes b & \longmapsto & [a \ast b ]
\end{array}
\end{equation*}
satisfying \emph{skew-commutativity}
\begin{equation*}%\label{skew}
[b \ast a ] = -(\sigma \oH {{\id}_{L}}) [a \ast b],
\end{equation*}
and the
\emph{Jacobi identity}
\begin{equation}\label{jacobi}
[[ a \ast b] \ast c] = [ a \ast [ b \ast c ]] - (( \sigma \otimes
\id) {\otimes}_{H} \id ) [ b \ast [ a \ast c ]],
\end{equation}
for all choices of $a, b, c \in L$.
Here, $\sigma: H \tt H \to H \tt H$ denotes the permutation of factors, $\sigma ( \ot hk) =  \ot kh$, and \eqref{jacobi} takes place in $( H \otimes H \otimes H) \oH L$, once we extend the pseudobracket
so that
$$[ ((h \tt k) \oH r)  \ast s ] = \sum\limits_{i} ( h \tt k \tt 1) (\ot \Delta \id)
( \ot {f^i} {g^i} ) \oH t_i,$$
$$[ r \ast ((h \tt k) \oH s) ] = \sum\limits_{i} (1 \tt h \tt k) (\ot \id \Delta)
( \ot {f^i} {g^i} ) \oH t_i,$$ if $[r \ast s] = \sum\limits_{i} (
\ot {f^i} {g^i} ) \oH t_i$, where $f^i, g^i, h, k \in H$, $r, s, t_i \in L$.

If $L, M$ are Lie pseudoalgebras over $H$, then an $H$-linear map $f: L \to M$ is a {\em Lie pseudoalgebra homomorphism} if $[ f(a) \ast f(b)] = ((\id \tt \id) \oH f) [a \ast b],$ for all $a, b \in L$. A Lie pseudoalgebra $L$ is \emph{finite} if it is finitely generated as an $H$-module.
\begin{ex}
If $H=\kk$ then $\ot HH \simeq H$ and $\Delta =\id$. In this case the notion of Lie pseudoalgebra over $\kk$ is equivalent to the ordinary notion of a Lie algebra.
\end{ex}
\begin{ex}\label{conformal}
Let $\mathfrak{d} = \kk \partial$ be a one dimensional Lie algebra. Then $H=\mathcal{U} (\mathfrak{d})=\kk [\partial]$ has a standard cocommutative Hopf algebra structure. In this case the axioms of Lie pseudoalgebra over $H$ are equivalent to the axioms of Lie conformal algebra \cite{DK, K}. The equivalence between pseudobracket and $\lambda$-bracket is given by
$$[ a \ast b ] = \sum\limits_{i} P_i ( \ot \partial 1, \ot 1 \partial)
\oH c_i \,\, \iff %\longleftrightarrow
\,\, [a _{\lambda} b ] =
\sum\limits_{i} P_i (-\lambda,\partial+\lambda) c_i.$$
\end{ex}

\subsection{Hopf algebra notations}
Throughout the rest of the paper $\dd$ will denote a finite-dimensional Lie algebra, and $H=\mathcal{U} (\mathfrak{d})$ its universal enveloping algebra.

$H$ is a Noetherian domain and possesses a standard Hopf algebra structure satisfying
$$\Delta (\partial) = \ot \partial 1 + \ot 1 \partial, \quad \,\, S (
\partial )=-\partial, \qquad \quad \partial \in \mathfrak{d}.$$
If $\dim \mfr d=N$ and ${\{\partial_i\}}_{i=1}^{N}$ is a basis of $\mfr d$ then
\begin{equation*}%\label{PBW}
\buni I = \pbw, \qquad \qquad I=(i_1,\dots,i_N) \in \N^{N},
\end{equation*}
is a $\kk$-basis of $H$, by the Poincar\`{e}-Birkoff-Witt Theorem. The coproduct satisfies
\begin{equation}\label{delta}
\Delta ({\buni I}) = \sum\limits_{J+K=I} \ot {\buni J}{\buni K}.
\end{equation}
Recall that $H$ has a canonical increasing filtration given by
$$F^{n}H = \spn_{\kk} \{ \buni I \mid | I | \leq n \}, \qquad
\,\, n=0,1,2,\dots$$
where $|I|=i_1+\dots+i_N$ if $I=(i_1,\dots,i_N)$. This filtration satisfies $F^{-1} H=\{ 0 \}$, $F^{0} H = \kk$,
$F^{1} H = \kk \oplus \mfr d$. We will say that elements in $F^i H \setminus F^{i-1} H$ have degree
$i$. Due to \eqref{delta}, $\Delta (h) - 1 \tt h \in H \tt F^{i-1}H$ if $h$ has degree $i$.

\begin{rem}
It is easy to check that
$$h \tt k = (h S(k_{(1)})\tt 1) \cdot  \Delta(k_{(2)}),$$
for all $h, k \in H$, hence every element of $H \tt H$ can be expressed in the form $\sum_i (h_i \tt 1) \Delta (l_i)$, where $h_i, l_i \in H$.

Similarly, whenever $M$ is an $H$-module, elements from $(H \tt H) \oH M$ can be {\em straightened} to the form $\sum_i (h_i \tt 1) \oH m_i$. Notice that both the $h_i$ and the $m_i$ can be chosen to be linearly independent.
\end{rem}
\begin{lemma}\label{HoHLtoHA}
The linear map $\tau_M: (H \tt H)\oH M \to H \tt M$ defined by $\tau_M((h \tt k) \oH m) = hS(k_{(1)}) \tt k_{(2)}m$ is an isomorphism of vector spaces.
\end{lemma}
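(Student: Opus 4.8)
The plan is to produce an explicit two-sided inverse. Define the $\kk$-linear map $\rho_M\colon H\otimes M\to(H\otimes H)\otimes_H M$ by $\rho_M(h\otimes m)=(h\otimes 1)\otimes_H m$. I will check (i) that $\tau_M$ is well defined, (ii) that $\tau_M\circ\rho_M=\id$, and (iii) that $\rho_M\circ\tau_M=\id$.

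For (i), since the formula $(h\otimes k)\otimes m\mapsto hS(k_{(1)})\otimes k_{(2)}m$ is clearly additive in each slot, the only thing to verify is that it is compatible with the $H$-balancing defining $(H\otimes H)\otimes_H M$, i.e.\ that $\tau_M\bigl((h\otimes k).g\otimes_H m\bigr)=\tau_M\bigl((h\otimes k)\otimes_H gm\bigr)$ for all $g\in H$. Recalling that the right $H$-action on $H\otimes H$ is $\alpha.g=\alpha\Delta(g)$, we have $(h\otimes k).g=hg_{(1)}\otimes kg_{(2)}$, so the left-hand side equals $hg_{(1)}S\bigl(k_{(1)}g_{(2)}\bigr)\otimes k_{(2)}g_{(3)}m$, where the iterated coproduct of $g$ has been reindexed using coassociativity together with the fact that $\Delta$ is an algebra map. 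Since $S$ is an algebra anti-homomorphism this becomes $hg_{(1)}S(g_{(2)})S(k_{(1)})\otimes k_{(2)}g_{(3)}m$, and the antipode identity $\sum g_{(1)}S(g_{(2)})\otimes g_{(3)}=1\otimes g$ collapses it to $hS(k_{(1)})\otimes k_{(2)}gm$, which is the right-hand side.

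Item (ii) is immediate: $\tau_M\bigl(\rho_M(h\otimes m)\bigr)=\tau_M\bigl((h\otimes 1)\otimes_H m\bigr)=hS(1)\otimes m=h\otimes m$. For (iii) I would invoke the identity $h\otimes k=\sum(hS(k_{(1)})\otimes 1)\Delta(k_{(2)})$ recorded in the Remark above; since $(hS(k_{(1)})\otimes 1)\Delta(k_{(2)})=(hS(k_{(1)})\otimes 1).k_{(2)}$, this lets $k_{(2)}$ pass through $\otimes_H$ and yields $(h\otimes k)\otimes_H m=\sum(hS(k_{(1)})\otimes 1)\otimes_H k_{(2)}m=\rho_M\bigl(\sum hS(k_{(1)})\otimes k_{(2)}m\bigr)=\rho_M\bigl(\tau_M((h\otimes k)\otimes_H m)\bigr)$. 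Hence $\rho_M$ and $\tau_M$ are mutually inverse $\kk$-linear maps.

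All the manipulations are routine Sweedler-notation bookkeeping; the one point that deserves care is the reindexing of the iterated coproduct of $g$ in step (i) before the antipode identity is applied, but there is no genuine obstacle here.
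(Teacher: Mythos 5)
Your proof is correct and follows essentially the same route as the paper, which simply notes that $\tau_M$ is well defined and that $h \otimes m \mapsto (h \otimes 1)\otimes_H m$ is its inverse; you have spelled out the balancing check and the two composite identities (the latter via the straightening identity $h\otimes k=(hS(k_{(1)})\otimes 1)\Delta(k_{(2)})$ from the preceding Remark, exactly as intended). No gaps.
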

\begin{proof}
It is clearly well defined, and $h \tt m \mapsto (h \tt 1) \oH m$ is its inverse.
\end{proof}
\begin{rem}
When $M = H$, the above lemma shows invertibility of the map $h \tt k \mapsto h S(k_{(1)}) \tt k_{(2)}$. Its inverse $\Fou: h \tt k \mapsto h k_{(1)} \tt k_{(2)}$ is a linear endomorphism of $H \tt H$, called {\em Fourier transform} in \cite{BDK1}.
\end{rem}
\begin{cor}\label{coefficienti}
Let $M$ be an $H$-module, $\alpha = \sum_i (f^i \tt g^i) \oH m_i \in (H \tt H) \oH M$, $\gamma \in \Hom_\kk(H, H)$. Then the element
$$\alpha_\gamma = \sum_i \gamma(f^i S(g^i_{(1)})) g^i_{(2)} m_i \in M$$
is well defined.
\end{cor}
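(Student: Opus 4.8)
What has to be shown is that the element $\alpha_\gamma$ does not depend on the way $\alpha$ is written as a finite sum $\sum_i (f^i\tt g^i)\oH m_i$. The plan is to realize the assignment $\alpha\mapsto\alpha_\gamma$ as a composition of maps each of which is manifestly well defined, namely
\[
(H\tt H)\oH M \xrightarrow{\ \tau_M\ } H\tt M \xrightarrow{\ \gamma\tt{\id}_M\ } H\tt M \xrightarrow{\ \mu\ } M,
\]
where $\tau_M$ is the isomorphism of Lemma \ref{HoHLtoHA} and $\mu\colon h\tt m\mapsto hm$ is the $H$-action on $M$.

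I would first invoke Lemma \ref{HoHLtoHA} to get that $\tau_M$ is a well-defined $\kk$-linear map with $\tau_M(\alpha)=\sum_i f^iS(g^i_{(1)})\tt g^i_{(2)}m_i$; then observe that $\gamma\tt{\id}_M$ is a well-defined $\kk$-linear map because $\gamma\in\Hom_\kk(H,H)$ and we are now dealing with an honest tensor product over $\kk$, and that $\mu$ is well defined, being the structure map of the $H$-module $M$. Composing the three and tracking a simple tensor $(h\tt k)\oH m$ gives
\[
\mu\bigl((\gamma\tt{\id}_M)(\tau_M((h\tt k)\oH m))\bigr) = \gamma(hS(k_{(1)}))\,k_{(2)}m ,
\]
and since $\gamma(f^iS(g^i_{(1)}))\,(g^i_{(2)}m_i)=\bigl(\gamma(f^iS(g^i_{(1)}))\,g^i_{(2)}\bigr)m_i$ by associativity of the action, the composite sends $\alpha$ to $\sum_i\gamma(f^iS(g^i_{(1)}))\,g^i_{(2)}m_i=\alpha_\gamma$. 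Being a composition of well-defined maps, this is independent of the chosen representative of $\alpha$, which is the assertion.

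The one point worth flagging --- and it is minor --- is that one should resist the temptation to apply $\gamma$ directly at the level of $(H\tt H)\oH M$: since $\gamma$ is merely $\kk$-linear, $\gamma\tt{\id}_M$ need not descend to the $H$-balanced tensor product, so it is essential to pass through $\tau_M$ first and convert to a plain tensor product over $\kk$. After that, the proof is pure bookkeeping with the Sweedler notation $\Delta(g^i)=g^i_{(1)}\tt g^i_{(2)}$, and I anticipate no real obstacle.
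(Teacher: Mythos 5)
Your argument is correct and is essentially identical to the paper's proof: both realize $\alpha\mapsto\alpha_\gamma$ as the composition $\mu\circ(\gamma\tt\id_M)\circ\tau_M$, with well-definedness inherited from Lemma \ref{HoHLtoHA}. Your closing remark on why $\gamma$ cannot be applied before passing through $\tau_M$ is a sensible extra observation, but the substance of the proof matches the paper's.
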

\begin{proof}
The map $\tau_M$ from the previous lemma maps $\alpha \in (H \tt H) \oH M$ to $\sum_i f^i S(g^i_{(1)})) \tt g^i_{(2)} m_i \in H \tt M$. The element $\alpha_\gamma$ is then obtained by applying $\mu \circ (\gamma \tt \id_M)$, where $\mu: H \tt M \to M$ is the $H$-module structure map.
\end{proof}

Elements of the form $\alpha_\gamma$ are called {\em coefficients} of $\alpha$.
\begin{rem}\label{coeff}
It is worth noticing that if $\alpha = \sum_i (h^i \tt 1) \oH c_i$, and the $h^i$ are linearly independent over $\kk$, then all elements $c_i$ can be realized as coefficients of $\alpha$; namely $c_i = \alpha_{\gamma_i}$, where $\gamma_i(h^j) = \delta_i^j$. In particular, $\alpha$ lies in $(H \tt H) \oH S$, where $S$ is an $H$-submodule of $L$, if and only if $c_i \in S$ for all $i$.
\end{rem}
Let $L$ be a Lie pseudoalgebra over $H$. For any choice of $A, B \subset L$, set $[A, B]$ to be the smallest
$H$-submodule $S \subset L$ such that $[a \ast b] \in (H \tt H) \oH S$ for all $a \in A, b \in B$. Due to Remark \ref{coeff}, $[A, B]$ is the $H$-submodule generated by coefficients of all $[a \ast b], a \in A, b \in B$. A subspace $S \subset L$ is a {\em subalgebra} of $L$ if $[S, S] \subset S$. If $X$ is a subset of the Lie pseudoalgebra $L$, then $\langle X \rangle$ denotes the {\em subalgebra generated by $X$}, i.e., the smallest subalgebra of $L$ containing $X$.

Define the \emph{derived series} of $L$ as $L^{(0)} = L$, $L^{(1)} = [L, L]$, $L^{(n+1)} = [L^{(n)}, L^{(n)}]$. Similarly, the \emph{central series} of $L$ is defined by $L^{[0]} =L$, $L^{[1]} = [L, L]$, $L^{[n+1]} = [L, L^{[n]}]$. A Lie pseudoalgebra $L$ is {\em solvable} (resp. {\em nilpotent}) if $L^{(n)}$ (resp. $L^{[n]}$) equals $(0)$ for some $n$; $L$ is \emph{abelian} if the {\em derived subalgebra} $L' = [L, L]$ equals $(0)$.

An {\em ideal} of a Lie pseudoalgebra $L$ is a subspace $I$ such that $[L, I] \subseteq I$. The {\em centre} $Z(L)$ of $L$ is the space of all elements $z \in L$ such that $[z, L] = (0)$. Every $H$-submodule of $Z(L)$ is an ideal. If $N$ is a central ideal of $L$, then $L$ is nilpotent if and only if $L/N$ is nilpotent.

\begin{lemma}\label{alfaoHemme}
Let $H$ be a cocommutative Hopf algebra, $M$ an $H$-module. Assume $\alpha \in H \tt H$ and $m \in M$ is not a torsion element. Then $\alpha \oH m=0$ if and only if $\alpha=0$.
\end{lemma}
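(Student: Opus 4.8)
The plan is to reduce everything to the elementary fact that in a tensor product of vector spaces a sum $\sum_i h_i \otimes n_i$ with the $h_i$ linearly independent over $\kk$ vanishes only when every $n_i = 0$. The conceptual point is that $H \otimes H$, equipped with the right $H$-action $\beta . h = \beta\Delta(h)$, is a \emph{free} right $H$-module --- this is exactly invertibility of the Fourier transform $\Fou$ --- so that the functor $- \otimes_H M$ is exact and $\alpha \otimes_H m$ still ``remembers'' the individual coefficients of $\alpha$.

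One implication is trivial. For the converse, first I would straighten $\alpha$ by passing through the Fourier transform: since $\Fou\colon h \otimes k \mapsto (h \otimes 1)\Delta(k)$ is a bijection, set $\Fou^{-1}(\alpha) = \sum_i a^i \otimes b^i \in H \otimes H$ and rewrite this sum so that the $a^i$ are $\kk$-linearly independent (expand in a $\kk$-basis of the first tensor factor); discarding vanishing terms we may further assume every $b^i \neq 0$. Now I would apply the isomorphism $\tau_M \colon (H \otimes H) \otimes_H M \to H \otimes M$ of Lemma~\ref{HoHLtoHA}. Since $\tau_M\big((h \otimes k) \otimes_H m\big) = hS(k_{(1)}) \otimes k_{(2)} m$ and $hS(k_{(1)}) \otimes k_{(2)} = \Fou^{-1}(h \otimes k)$, applying $\tau_M$ amounts to forming $\Fou^{-1}(\alpha)$ and then letting the middle tensor factor act on $m$; explicitly,
$$\tau_M(\alpha \otimes_H m) = \sum_i a^i \otimes b^i m \in H \otimes_\kk M .$$
Hence $\alpha \otimes_H m = 0$ forces $\sum_i a^i \otimes b^i m = 0$, and linear independence of the $a^i$ yields $b^i m = 0$ for every $i$. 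Since $m$ is not a torsion element and each $b^i$ is nonzero, the sum must in fact be empty, that is, $\Fou^{-1}(\alpha) = 0$, and therefore $\alpha = 0$.

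I do not expect a genuine obstacle: the statement is essentially formal once the earlier machinery is in place. The only point needing a little attention is the bookkeeping in the straightening step --- choosing the $a^i$ linearly independent while keeping the $b^i$ nonzero, which is immediate from injectivity of $\Fou$ --- together with the identification of $\tau_M$ with ``$\Fou^{-1}$ followed by the action map'', so that one really does see each $b^i m$ separately. After that, Lemma~\ref{HoHLtoHA} and the non-torsion hypothesis finish the argument at once. (Note, incidentally, that cocommutativity of $H$ plays no role here; only the Hopf algebra axioms, via $\Fou$, are used.)
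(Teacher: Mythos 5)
Your proof is correct and follows essentially the same route as the paper's: both pass through the isomorphism $\tau_M$ of Lemma~\ref{HoHLtoHA}, identify $\tau_M(\alpha \oH m)$ with $\Fou^{-1}(\alpha)$ acted on $m$ in the second factor, and then use the non-torsion hypothesis to recover $\alpha = 0$. The only cosmetic difference is that the paper reconstructs $\alpha$ by composing with the invertible map $\Fou \circ (\id_H \tt \phi^{-1})$, where $\phi(h)=hm$, while you expand with $\kk$-linearly independent first factors and kill each coefficient $b^i m$ separately.
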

\begin{proof}
Let $Hm$ be the cyclic module generated by $m$. Assuming that $m$ is not torsion is equivalent to requiring that the map $\phi: H \to Hm$, $\phi(h) = hm$ is an isomorphism of vector spaces. Let
$$\alpha=\sum_i h^i \otimes k^i \in \ot HH.$$
By Lemma \ref{HoHLtoHA} we have
\begin{equation*}%\label{alfaprimo}
\tau_M ( (\alpha \oH m ))=\sum_i h^i S({k^i}_{(1)}) \otimes {k^i}_{(2)} m \in H \otimes Hm\subset H \tt M.
\end{equation*}
Applying the invertible map $\Fou \circ (\id_H \tt \phi^{-1}): H \tt Hm \to H \tt H$ to this element gives back $\alpha$.
\end{proof}

\subsection{The general linear pseudoalgebra}
Let $L$ be a Lie pseudoalgebra over $H$. A \emph{representation} of $L$, or $L$-\emph{module}, is an $H$-module $V$ endowed with an $H \tt H$-linear {\em action}
$$ \ot L V \ni \ot a v \mapsto a \ast v \in (\ot H H ) \oH V,$$
such that, for every $a,b \in L$, $v \in V$,
\begin{equation*}%\label{action}
[ a \ast b ] \ast v = a \ast ( b \ast v ) - (( \ot \sigma  \id ) \oH
\id ) ( b \ast ( a \ast v ) ),
\end{equation*}
which is understood as in \eqref{jacobi}. An $L$-module $V$ is \emph{finite} if it is finitely generated as an $H$-module.

Let $V$ be a representation of the Lie pseudoalgebra $L$. If $A \subset L, X \subset V$, then set $A\cdot X$ to be the smallest $H$-submodule $N$ of $V$ such that $a \ast v \in (H \tt H) \oH N$ for all $a \in A, v \in X$. By Remark \ref{coeff}, $A \cdot X$ is the $H$-submodule of $V$ generated by all coefficients of $a \ast v, a \in A, v \in X$.

An $H$-submodule $W\subset V$ is {\em stable under the action of} $a \in L$ if $a \cdot W \subset W$. It is an \emph{$L$-submodule} of $V$ if $L \cdot W \subset W$. An $L$-module $V$ is \emph{irreducible} if it does not contain any nontrivial $L$-submodule. If $U$ and $V$ are two $L$-modules, then a map $\phi: U \longrightarrow V$
is a \emph{homomorphism} of $L$-modules if it is $H$-linear and satisfies
$$a \ast \phi (u)=  ( ( \ot \id  \id ) \oH \phi ) ( a \ast u ),$$
for all $a \in L, \, u \in U.$
Let $V,W$ be two $H$-modules. A map $f: V \longrightarrow (\ot H H) \oH W$ is a \emph{pseudolinear map} from $V$ to $W$ if it is $\kk$-linear and satisfies
$$ f(hv) = (1 \tt h) \cdot f(v), \quad \quad h \in H, v \in V.$$
The space $\chom (V,W)$ of all pseudolinear maps from $V$ to $W$ has a left $H$-module structure given by
$$ (h f) (v) = (h \tt 1) \cdot f(v).$$
If $V=W$ we set $\cend V = \chom (V,V)$. If $V$ is a finite $H$-module then there exists a unique Lie pseudoalgebra structure on $\cend V$ making $V$ a representation of $\cend V$ via the action $f \ast v = f (v)$. This Lie pseudoalgebra is usually denoted by $\gc V$, and making a finite $H$-module $V$ into a representation of a Lie pseudoalgebra $L$ is equivalent to giving a Lie pseudoalgebra homomorphism from $L$ to $\gc V$.

\begin{ex}
Any Lie pseudoalgebra $L$ over $H$ is a module over itself via $a * b := [a * b], a, b \in L$. When $L$ is finite, the adjoint action defines a Lie pseudoalgebra homomorphism $\ad : L \to \gc L$ whose kernel equals $Z(L)$. Notice that $L$ is nilpotent if and only if $L/Z(L)$ is nilpotent.
\end{ex}
\begin{rem}\label{torLtorM}
If $f \in \chom (V, W)$, then $f \ast v = 0$ as soon as $v \in \Tor V$. The adjoint action of any given $a \in L$ induces an element $\overline{\ad a} \in \chom(L^{[n]}/L^{[n+1]}, L^{[n+1]}/L^{[n+2]})$. Assume $L$ is a finite Lie pseudoalgebra, and $L^{[n]}, L^{[n+1]}$ have the same rank. Then the quotient $L^{[n]}/L^{[n+1]}$ is torsion, whence $\overline{\ad a} = 0$ for all $a \in L$. This forces $L^{[n+1]} = L^{[n+2]}$, and we conclude that the central series of any finite Lie pseudoalgebra stabilizes to an ideal, that we denote by $L^{[\infty]}$.
\end{rem}

\subsection{Action of coefficients}

If $a, b$ are elements of a Lie pseudoalgebra $L$ over $H$, it may be useful to know the action of coefficients $[a \ast b]_\gamma$, as defined in Corollary \ref{coefficienti}, on an $L$-module $M$.
\begin{lemma}\label{actioncoeff}
Let $L$ be a Lie pseudoalgebra over $H$, $M$ an $L$-module. Choose $a, b \in L$, and set $\alpha = [a \ast b]$. Assume that
$$[a \ast b] \ast v = \sum_i (k^i \tt l^i \tt m^i) \oH v_i,$$
where $k^i, l^i, m^i, \in H$ and $v_i \in M$. If $\gamma \in \Hom_\kk(H, H)$, then
\begin{equation}\label{actionofcoeff}
\alpha_\gamma \ast v = \sum_i (\gamma(k^i S(l^i_{(1)})) l^i_{(2)}
\tt m^i) \oH v_i.
\end{equation}
\end{lemma}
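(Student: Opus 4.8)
The plan is to reduce everything to the \emph{straightened} form of $\alpha=[a\ast b]$ and then to unwind the definitions of the extended action and of ``coefficient''; the only real content will be Hopf-algebra bookkeeping. First I would write $\alpha=[a\ast b]=\sum_j(h^j\tt 1)\oH c_j$ with $c_j\in L$, $h^j\in H$ (the straightening recalled in the remark before Lemma~\ref{HoHLtoHA}). Applying Corollary~\ref{coefficienti} with every $g^i$ replaced by $1$, and using $\Delta(1)=1\tt 1$, gives the clean expression $\alpha_\gamma=\sum_j\gamma(h^j)\,c_j\in L$, where $\gamma(h^j)\in H$ acts on $c_j$. Since the representation $L\to\gc M$ is a Lie pseudoalgebra homomorphism, hence $H$-linear, and the $H$-module structure on $\cend M$ is $(hf)(v)=(h\tt 1)\cdot f(v)$, I obtain $\alpha_\gamma\ast v=\sum_j(\gamma(h^j)\tt 1)\cdot(c_j\ast v)$; writing $c_j\ast v=\sum_k(p^{jk}\tt q^{jk})\oH v_{jk}$, this equals $\sum_{j,k}(\gamma(h^j)p^{jk}\tt q^{jk})\oH v_{jk}$.

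Next I would compute the argument $\alpha\ast v$ of the right-hand side from the same data, via the rule extending the action to $(H\tt H)\oH L$ (the module analogue of the displayed formula extending the pseudobracket): from $\alpha=\sum_j(h^j\tt 1)\oH c_j$ one gets $\alpha\ast v=\sum_{j,k}(h^jp^{jk}_{(1)}\tt p^{jk}_{(2)}\tt q^{jk})\oH v_{jk}$, which is an admissible choice of the expression $\sum_i(k^i\tt l^i\tt m^i)\oH v_i$ occurring in the statement. Substituting $k^i=h^jp^{jk}_{(1)}$, $l^i=p^{jk}_{(2)}$, $m^i=q^{jk}$, $v_i=v_{jk}$ into the right-hand side of \eqref{actionofcoeff}, and using that $S$ is an anti-homomorphism together with coassociativity, the $H$-coefficient $\gamma\bigl(k^iS(l^i_{(1)})\bigr)l^i_{(2)}$ turns into $\sum\gamma\bigl(h^j\,p^{jk}_{(1)}S(p^{jk}_{(2)})\bigr)p^{jk}_{(3)}$. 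The single Hopf identity that finishes the computation is $\sum x_{(1)}S(x_{(2)})\tt x_{(3)}=1\tt x$ (valid after grouping $\Delta^{(2)}=(\Delta\tt\id)\Delta$ and applying the antipode axiom to the first two legs): it collapses the previous expression to $\gamma(h^j)\,p^{jk}$, so the right-hand side equals $\sum_{j,k}(\gamma(h^j)p^{jk}\tt q^{jk})\oH v_{jk}=\alpha_\gamma\ast v$, as found above.

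Finally I would check that the right-hand side of \eqref{actionofcoeff} does not depend on the chosen expression for $\alpha\ast v$; equivalently, that $\sum_i(k^i\tt l^i\tt m^i)\oH w_i\mapsto\sum_i\bigl(\gamma(k^iS(l^i_{(1)}))l^i_{(2)}\tt m^i\bigr)\oH w_i$ is a well-defined map on $(H\tt H\tt H)\oH M$. This goes exactly as in the proof of Corollary~\ref{coefficienti}: it is the composite of the straightening isomorphism generalizing Lemma~\ref{HoHLtoHA} with $\mu\circ(\gamma\tt\id)$ on the relevant $H$-leg, and invariance under the defining relation $(\beta\,\Delta^{(2)}(h))\oH w=\beta\oH(hw)$ follows once more from $S$ being an anti-homomorphism and from $\sum x_{(1)}S(x_{(2)})\tt\cdots=1\tt\cdots$. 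Granting this, the computation above --- done for the particular expression coming from the straightened form of $\alpha$ --- yields \eqref{actionofcoeff} in general. The proof is purely formal; the step demanding care is the rewriting of $\gamma(k^iS(l^i_{(1)}))l^i_{(2)}$, where the leg that stays \emph{outside} $\gamma$ must remain correctly coupled to those inside it, so that the antipode axiom is applied in the valid form $\sum x_{(1)}S(x_{(2)})\tt x_{(3)}=1\tt x$ rather than in a form that is false. Working with the straightened form of $\alpha$ is what keeps this bookkeeping short; the same computation with a general expression $\alpha=\sum_j(f^j\tt g^j)\oH t_j$ works too, but then one additionally needs $\sum S(x_{(1)})x_{(2)}=\epsilon(x)1$ to dispose of the $g^j$-legs.
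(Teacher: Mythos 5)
Your proof is correct and follows essentially the same route as the paper: both straighten $[a\ast b]=\sum_j(h^j\tt 1)\oH{c_j}$ and rest on the well-definedness of the assignment $(k\tt l\tt m)\oH{w}\mapsto\bigl(\gamma(kS(l_{(1)}))l_{(2)}\tt m\bigr)\oH{w}$ on $(H\tt H\tt H)\oH{M}$ (the paper's $\phi_\gamma$), your only deviations being that you treat a general $\gamma$ directly rather than the dual functionals $\gamma_i$ followed by linearity, and that you actually verify the well-definedness which the paper merely asserts. Incidentally, your intermediate expression $[a\ast b]\ast v=\sum_{j,k}(h^j p^{jk}_{(1)}\tt p^{jk}_{(2)}\tt q^{jk})\oH{v_{jk}}$ is the one dictated by the composition rule recalled in the preliminaries, and it is exactly what lets the antipode axiom collapse the coefficient as needed; the extra antipode in the paper's displayed formula \eqref{actionofbracket} appears to be a misprint.
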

\begin{proof}
The assignment $k \tt l \tt m \mapsto (\gamma(k S(l_{(1)})) l_{(2)} \tt m) \oH u$ extends  to a well-defined linear map $\phi_\gamma: (H \tt H \tt H) \oH M \to (H \tt H) \tt_H M$. Moreover, if $[a \ast b] = \sum_i (h^i \tt 1) \tt_H c_i$, where the $h^i$ are linearly independent, and $c_i \ast v = \sum_j (k^{ij} \tt l^{ij}) \oH
v_{ij}$, then
\begin{equation}\label{actionofbracket}
[a \ast b] \ast v = \sum_{i, j} (h^i S(k^{ij}_{(1)}) \tt k^{ij}_{(2)} \tt l^{ij}) \oH v_{ij}.
\end{equation}
If we choose $\gamma_i(h^j) = \delta_i^j$, which is possible by linear independence of elements $h^j$, then $\phi_{\gamma_i}$ recovers from \eqref{actionofbracket} the expression \eqref{actionofcoeff} for the action of $c_i = c_{\gamma_i}$ on $v$. The general statement follows by $H\tt H$-linearity of the pseudobracket.
\end{proof}

\section{Representations of solvable and nilpotent Lie pseudoalgebras}
In the following two sections we recall some results from \cite{BDK1} about representation theory of solvable and nilpotent Lie pseudoalgebras.

\subsection{Weight vectors}
Let $L$ be a Lie pseudoalgebra over $H$ and $M$ be an $L$-module. If $\phi \in \Hom_{H} (L,H)$, the \emph{weight space} $M_{\phi}$ is defined as
$$M_{\phi} =\{ v \in M \mid a \ast v = ( \ot {\phi(a)}{1} ) \oH v,
\,\, \mbox{ for all } a \in L \}.$$
If $M_{\phi} \neq 0$, then $\phi$ is a {\em weight} for the action of $L$ on $M$. Every nonzero element of $M_{\phi}$ is a \emph{weight vector} of weight $\phi$.
\begin{rem}\label{freeness}
The weight space $M_{0}$ is always an $H$-submodule of $M$, whereas $M_\phi, \phi \neq 0$ is just a vector subspace. However, in this case, the $H$-submodule $H M_{\phi}\subset M$ is free over $M_{\phi}$.
\end{rem}
We have the following pseudoalgebraic analogues of Lie's Theorem:
\begin{thm}\label{liethm}
Let $L$ be a solvable Lie pseudoalgebra over $H$. Then every finite non-trivial $L$-module has
a weight vector.
\end{thm}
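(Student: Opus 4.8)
The plan is to adapt the classical proof of Lie's Theorem, the genuinely new work being the systematic bookkeeping of coefficients inside $(H\otimes H)\otimes_H(-)$. Since making $M$ into an $L$-module amounts to a homomorphism $L\to\gc M$, and a weight vector for the image subalgebra pulls back (along the homomorphism) to a weight vector for $L$, I would first replace $L$ by its image and assume $L\subseteq\gc M$; solvability is preserved, being a quotient property, and $M$ stays nonzero. If $L=0$ any nonzero vector of $M$ is a weight vector of weight $0$, so assume $L\neq 0$. By solvability $L'=[L,L]$ is a proper subalgebra, and every $H$-submodule $I$ with $L'\subseteq I\subseteq L$ satisfies $[L,I]\subseteq[L,L]=L'\subseteq I$, hence is automatically an ideal of $L$.

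Next I would run an induction on a suitable measure of the size of $L$ (its rank over the Noetherian domain $H$, refined where necessary by the minimal number of $H$-generators). Since $L/L'$ is a nonzero finitely generated $H$-module, discarding all but one element of a minimal generating set produces $a\in L$ and an $H$-submodule $I$ with $L'\subseteq I$, $a\notin I$ and $L=I+Ha$, arranged so that $I$ is strictly smaller than $L$ in the chosen measure. Then $I$ is a solvable ideal, so by the inductive hypothesis the finite nonzero $I$-module $M$ has a weight vector: there is $\phi\in\Hom_H(I,H)$ with $M_\phi\neq 0$.

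The technical heart is the pseudoalgebraic \emph{invariance lemma}: the $H$-submodule $HM_\phi$ is stable under the action of all of $L$, in particular under $a$. In the Lie-algebra case this reduces to $\phi([x,a])=0$ for $x\in I$, obtained from a trace computation on the span of $v,\ a\cdot v,\ a\cdot(a\cdot v),\dots$; here pseudolinear endomorphisms need not self-commute, so that computation cannot be transcribed verbatim. Instead one works throughout inside $(H\otimes H)\otimes_H(-)$, expands the coefficients of $[x\ast a]\ast v$ via the formula of Lemma~\ref{actioncoeff}, and extracts the analogue of $\phi([x,a])=0$ by an argument replacing the classical trace, using $\operatorname{char}\kk=0$ together with the fact that $H$ is a domain. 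Identifying exactly which coefficients enter this computation, and with what multiplicities, is the step I expect to be the main obstacle.

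Granting invariance, $L$ acts on $HM_\phi$ and it remains to locate, inside $M_\phi$ itself, a simultaneous $\langle a\rangle$-eigenvector. If $\phi=0$, then $M_0$ is an honest $H$-submodule of $M$ (Remark~\ref{freeness}) on which the solvable subalgebra $\langle a\rangle$ acts, and a nonzero torsion vector of $M_0$ is already a weight vector of weight $0$ by Remark~\ref{torLtorM}; otherwise $M_0$ is torsion-free and we apply the single-pseudolinear-endomorphism case to the action of $a$. If $\phi\neq 0$, then $HM_\phi$ is free over the finite-dimensional $\kk$-space $M_\phi$ (Remark~\ref{freeness}), and since $I$ acts on $HM_\phi$ through the weight $\phi$, a twisting by $\phi$ identifies the $\langle a\rangle$-action with that of a single pseudolinear endomorphism on $H\otimes_\kk M_\phi$ admitting an eigenvector in $1\otimes M_\phi$. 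In either case one invokes the base case of the induction — a pseudolinear endomorphism generating a solvable subalgebra stabilizes a flag, hence has an eigenvector, established directly from the structure of $\gc$ — to obtain $v_0\in M_\phi$ that is simultaneously an $I$-weight vector and an $\langle a\rangle$-weight vector; Lemma~\ref{alfaoHemme} guarantees that the resulting functional on $L=I+Ha$ is well defined, so $v_0$ is a weight vector for $L$, closing the induction.
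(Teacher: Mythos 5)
There is nothing in the paper to compare against here: Theorem \ref{liethm} is not proved in this paper at all --- the section opens with ``we recall some results from \cite{BDK1}'', and the known proof in that reference (as in \cite{DK} for the conformal case) does not transcribe the classical Lie argument; it passes to the annihilation Lie algebra $\mathcal L = H^*\otimes_H L$ and its topological representation theory, precisely because the trace computation at the heart of the classical proof has no direct pseudoalgebra analogue. Your proposal attempts the direct transcription, and the point where it would have to replace the trace is exactly where it stops: you state the invariance lemma (that $HM_\phi$, for an $I$-weight $\phi$, is stable under the extra generator $a$, i.e.\ the analogue of $\phi([I,a])=0$) and then say only that one should ``extract'' it by ``an argument replacing the classical trace'', admitting this is ``the main obstacle''. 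That is not a proof sketch with a routine verification left out; it is the whole content of the theorem. The classical argument uses that the span of $v, a\cdot v, a\cdot(a\cdot v),\dots$ is a finite-dimensional space on which one can take traces; here $\phi(a)$ is an element of $H$ rather than a scalar, $HM_\phi$ is infinite-dimensional over $\kk$, the iterates $a\ast(a\ast v)$ live in $(H\otimes H\otimes H)\otimes_H M$, and $a$ need not even self-commute, so no substitute computation is exhibited or obviously available by ``bookkeeping of coefficients''.

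There is a second unproved step even granting invariance. A weight vector for $L=I+Ha$ must in particular be an $I$-weight vector, hence must lie in $M_\phi$ itself: for $u\in M_\phi$ and nonconstant $h\in H$ one has $x\ast(hu)=(\phi(x)\otimes h)\otimes_H u$, which is not of the form $(\phi(x)\otimes 1)\otimes_H(hu)=(\phi(x)h_{(1)}\otimes h_{(2)})\otimes_H u$. So your assertion that the $\phi$-twisted action of $a$ on $HM_\phi\simeq H\otimes_\kk M_\phi$ ``admits an eigenvector in $1\otimes M_\phi$'' is exactly what needs proving, and nothing in the proposal forces the eigenvector of a pseudolinear endomorphism of a free module to lie in that finite-dimensional slice; in the classical proof this, too, falls out of the invariance computation ($a$ maps $M_\phi$ to $M_\phi$), which you do not have. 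Two lesser issues: your induction measure (rank refined by minimal number of $H$-generators) need not strictly decrease when passing from $L$ to the submodule $I$ over a Noetherian domain $H=\mathcal U(\dd)$ with $\dim\dd>1$ (submodules can require more generators at equal rank), and the theorem as stated does not assume $L$ finite, while your induction and the reduction to the image in $\gc M$ implicitly do. These could be repaired, but without the invariance step and the simultaneous-eigenvector step the argument does not establish the theorem; the route through the annihilation algebra used in \cite{BDK1} is the known way around both.
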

\begin{cor}\label{corLie}
If $L$ is a solvable Lie pseudoalgebra over $H$ and $M$ is a finite $L$-module then $M$ has a finite filtration by $L$-submodules $(0)=M_0 \subset M_1 \subset \cdots \subset M_n=M$ such that each quotient $M_{i+1} / M_i$ is generated over $H$ by a weight vector for the action of $L$.
\end{cor}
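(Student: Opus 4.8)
The plan is to obtain this as a routine consequence of Theorem~\ref{liethm}, building the filtration one layer at a time. The key preliminary observation is that any nonzero weight vector already spans a one-step piece of such a filtration: if $v\in M_\phi$ with $\phi\in\Hom_H(L,H)$, then $a\ast v=(\phi(a)\tt 1)\oH v$ for every $a\in L$, and using $H\tt H$-linearity of the action (recalling that the right $H$-action on $H\tt H$ is through $\Delta$) one gets $a\ast(hv)=(\phi(a)\tt h)\oH v\in(H\tt H)\oH Hv$ for every $h\in H$. Hence the cyclic submodule $Hv$ is an $L$-submodule of $M$, and it is by construction generated over $H$ by the weight vector $v$.

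Next I would iterate this. Start with $M_0=(0)$, and suppose an $L$-submodule $M_i\subsetneq M$ has been produced. Since $H$ is Noetherian and $M$ is finite over $H$, the quotient $M/M_i$ is again a finite $H$-module, and it is nonzero, so Theorem~\ref{liethm} produces a weight vector $\bar v\in M/M_i$. Let $M_{i+1}$ be the preimage of $H\bar v$ under $M\to M/M_i$. Applying the preliminary observation to the $L$-module $M/M_i$ shows that $H\bar v$ is an $L$-submodule; hence $M_{i+1}$ is an $L$-submodule of $M$, it contains $M_i$ strictly because $\bar v\neq 0$, and $M_{i+1}/M_i\iso H\bar v$ is generated over $H$ by the weight vector $\bar v$ for the action of $L$.

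It remains to see that the construction terminates. Since $H=\env$ is Noetherian and $M$ is finitely generated over it, $M$ is a Noetherian $H$-module, so the strictly increasing chain $M_0\subsetneq M_1\subsetneq M_2\subsetneq\cdots$ must stabilize after finitely many steps; it can only stabilize at $M_n=M$, and this yields the required finite filtration.

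I do not expect a genuine obstacle here. The two points that deserve a line of care are the verification that $H\bar v$ is an $L$-submodule --- which is precisely where the $H\tt H$-linearity of the module action and the $\Delta$-twisted $H$-module structure on $(H\tt H)\oH M$ come in --- and the appeal to the Noetherian property of $H$ to guarantee that the process halts; everything else is formal.
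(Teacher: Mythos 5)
Your argument is correct and is essentially the intended derivation: the paper states Corollary \ref{corLie} without proof (it is recalled from \cite{BDK1}), and your key observation that the cyclic module generated by a weight vector is an $L$-submodule is precisely Lemma \ref{MphiLsubmod}, proved there by the same computation $a\ast(hv)=(1\tt h)(a\ast v)=(\phi(a)\tt h)\oH v$. The iteration via Theorem \ref{liethm} applied to the successive quotients, together with Noetherianity of $M$ over $H=\env$ to force termination, is exactly the standard completion of the argument, so there is nothing to correct.
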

The {\em length} of an $L$-module $M$ is the minimal length of a filtration as above.

\subsection{Generalized weight submodules}
Let $L$ be a Lie pseudoalgebra over $H$, $\phi \in \Hom_H (L,H)=L^*$.
\begin{lemma}\label{MphiLsubmod}
Let $M$ be a finite $L$-module. If $N\subset M_{\phi}$ is a vector subspace, then $HN$ is an $L$-submodule of $M$.
\end{lemma}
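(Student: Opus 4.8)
The plan is to verify directly that $HN$ satisfies the defining property of an $L$-submodule, namely that $a \ast w \in (H\tt H)\oH{HN}$ for every $a\in L$ and every $w\in HN$; since $HN$ is in any case an $H$-submodule of $M$, this yields $L\cdot HN\subseteq HN$. Because the action map is $\kk$-linear in its second argument and $HN$ is spanned by the elements $hv$ with $h\in H$, $v\in N$, it suffices to treat $w=hv$.

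First I would reduce $a\ast(hv)$ to the weight-space datum. Since $v\in N\subseteq M_\phi$ we have $a\ast v=(\phi(a)\tt 1)\oH{v}$, and by $H\tt H$-linearity of the action (the left $H\tt H$-module structure on $(H\tt H)\oH{V}$ being left multiplication on the first two tensor factors) one gets
\begin{equation*}
a\ast(hv)=(1\tt h)\cdot(a\ast v)=(1\tt h)\cdot\bigl((\phi(a)\tt 1)\oH{v}\bigr)=(\phi(a)\tt h)\oH{v}.
\end{equation*}

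The only substantive point is then to \emph{straighten} this element so that its $V$-component lands in $HN$. Applying the identity $f\tt g=(fS(g_{(1)})\tt 1)\Delta(g_{(2)})$ recorded just before Lemma~\ref{HoHLtoHA}, together with the definition of the right $H$-action on $H\tt H$ used to form $\oH{{}}$, I would rewrite
\begin{equation*}
(\phi(a)\tt h)\oH{v}=\bigl(\phi(a)S(h_{(1)})\tt 1\bigr)\Delta(h_{(2)})\oH{v}=\bigl(\phi(a)S(h_{(1)})\tt 1\bigr)\oH{(h_{(2)}v)}.
\end{equation*}
Since $h_{(2)}v\in HN$, the right-hand side lies in $(H\tt H)\oH{HN}$, which is exactly what is needed. (Equivalently, one may argue through Corollary~\ref{coefficienti} and Remark~\ref{coeff}: every coefficient of $(\phi(a)\tt h)\oH{v}$ is of the form $\gamma\bigl(\phi(a)S(h_{(1)})\bigr)h_{(2)}v$ for some $\gamma\in\Hom_\kk(H,H)$, hence lies in $HN$, so the element itself lies in $(H\tt H)\oH{HN}$.)

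I do not anticipate a genuine obstacle: once the straightening identity is invoked this is a one-line computation, and the only thing to watch is keeping track of which tensor slot each $H$-action acts on. Note that finiteness of $M$ is not actually used here; it is carried along merely because it is the standing hypothesis of this subsection.
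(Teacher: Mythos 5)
Your proof is correct and follows essentially the same route as the paper: the key computation $a\ast(hv)=(1\otimes h)\cdot(a\ast v)=(\phi(a)\otimes h)\otimes_H v$ is exactly the paper's one-line argument. The subsequent straightening step (and the appeal to Corollary~\ref{coefficienti}) is harmless but not needed, since $v\in N\subset HN$ already places $(\phi(a)\otimes h)\otimes_H v$ in $(H\otimes H)\otimes_H HN$.
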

\begin{proof}
Let $n \in N$. Then, for any $h \in H$, $a * hn = (\ot 1h) (a *n)= (\phi (a) \otimes h) \oH n \in (\ot HH) \oH HN.$
\end{proof}
We set $M^{\phi}_{-1}= (0)$ and inductively
\begin{equation*}
M^{\phi}_{i+1}=\spn_H \{ m \in M \mid a * m - (\phi(a) \otimes 1) \oH m \in (\ot HH) \oH M^{\phi}_{i}, \,\, \forall \, a \in L \}.
\end{equation*}
Then $M^{\phi}_{0}=H M_{\phi}$ and $M^{\phi}_{i+1} / M^{\phi}_{i}= H {( M/ M^{\phi}_{i})}_{\phi}$. The $M^{\phi}_{i}$ form an increasing sequence of $H$-submodules of $M$. By Noetherianity of $M$ this sequence stabilizes to an $H$-submodule $M^{\phi}=\bigcup\limits_i M^{\phi}_{i}$ of $M$, which is called the \emph{generalized weight submodule} relative to the \emph{weight} $\phi$.

We will occasionally stress the dependence of $M^\phi$ on the choice of the Lie pseudoalgebra acting on $M$ by writing $M_L^\phi$, or simply $M_a^\phi$ when $L = \langle a \rangle$ is the subalgebra of $\gc M$ generated by a single element $a$.
\begin{prop}
Let $N \subset M$ be finite $L$-modules, $\phi, \psi\in L^*$. Then
\begin{itemize}
\item $M^\phi$ is an $L$-submodule of $M$, and is a free $H$-module whenever $\phi \neq 0$.
\item The sum of generalized weight submodules of $M$ is always direct. In particular $M^\phi \cap M^\psi = (0)$ if $\phi \neq \psi$.
\item $N \subset M^\phi$ if and only if $N^\phi = N$.
\item $(M/M^\phi)^\phi = (0)$.
\item If $N \subset M^\phi$, then $N = M^\phi$ if and only if $(M/N)^\phi = (0)$.
\end{itemize}
\end{prop}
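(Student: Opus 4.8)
The bulk of the statement is formal, resting on the functoriality of $M\mapsto M^\phi$. An induction on $i$ shows that each $M^\phi_i$ is an $L$-submodule of $M$ — if $a*m-(\phi(a)\tt 1)\oH m\in(H\tt H)\oH M^\phi_i$ for all $a$, then $b*m\in(H\tt H)\oH M^\phi_{i+1}$ for all $b$ — hence so is $M^\phi$; the same induction shows that an $L$-module map $f\colon M\to M'$ sends $M^\phi_i$ into $(M')^\phi_i$, so that $N^\phi\subseteq M^\phi$ for a submodule $N$ and $\pi(M^\phi)\subseteq(M/N)^\phi$ for the projection $\pi\colon M\to M/N$. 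For $\phi\neq 0$, $M^\phi_{i+1}/M^\phi_i=H(M/M^\phi_i)_\phi$ is $H$-free by Remark \ref{freeness}, so $0\to M^\phi_i\to M^\phi_{i+1}\to H(M/M^\phi_i)_\phi\to 0$ splits and $H$-freeness propagates up the filtration from $M^\phi_{-1}=(0)$, which is the freeness clause of the first item. Stabilisation of the filtration forces $H(M/M^\phi)_\phi=(0)$, hence $(M/M^\phi)_\phi=(0)$ and then $(M/M^\phi)^\phi_i=(0)$ for all $i$ by the inductive formula, which is the fourth item; the fifth follows by applying $\pi$ to $M^\phi$, and the easy half of the third is $N=N^\phi\subseteq M^\phi$.

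The real content lies in the forward direction of the third item and in the directness in the second, and both reduce to the single fact that \emph{if a finite $L$-module $P$ satisfies $P=HP_\phi$, then every $L$-submodule $A\subseteq P$ satisfies $A=HA_\phi$ (so in particular $P$ carries no weight vector of weight $\psi\neq\phi$).} Granting this, $N\subseteq M^\phi\Rightarrow N^\phi=N$ follows by intersecting the filtration with $N$: $(N\cap M^\phi_{i+1})/(N\cap M^\phi_i)$ embeds in $M^\phi_{i+1}/M^\phi_i=H(M/M^\phi_i)_\phi$, hence equals $H$ times its weight-$\phi$ vectors; lifting such a weight vector to $w\in N\cap M^\phi_{i+1}$ gives $a*w-(\phi(a)\tt 1)\oH w\in(H\tt H)\oH(N\cap M^\phi_i)$, so the induction $N\cap M^\phi_i\subseteq N^\phi_i$ runs and $N=N\cap M^\phi\subseteq N^\phi$. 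For the second item one first derives, by induction on the length of the $\phi$-filtration (using the no‑foreign‑weight‑vectors statement at the bottom step, and the formal facts $(M^\psi)^\psi=M^\psi$ and ``$Q\subseteq P$ with $Q^\psi=(P/Q)^\psi=(0)$ forces $P^\psi=(0)$'' at the inductive step), that $P^\phi=P$ implies $P^\psi=(0)$ whenever $\psi\neq\phi$; then $M^{\phi_1}\cap\bigoplus_{j\geq 2}M^{\phi_j}$ is a submodule of $\bigoplus_{j\geq 2}M^{\phi_j}$, whose $\phi_1$-generalised weight submodule is $\bigoplus_{j\geq 2}(M^{\phi_j})^{\phi_1}=(0)$ (via $(P\oplus Q)^\phi=P^\phi\oplus Q^\phi$, obtained by projecting), while it coincides with its own $\phi_1$-generalised weight submodule by the third item; hence it vanishes, and induction on the number of weights gives the directness of $\sum_\chi M^\chi$.

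The main obstacle is therefore the displayed lemma, which is where the Hopf algebra genuinely enters. When $\phi=0$ the action on $P=HP_0=P_0$ is trivial and there is nothing to prove, so suppose $\phi\neq 0$; then $P\iso H\tt V$ with $V=P_\phi$ by Remark \ref{freeness}, the action being the diagonal one $a*(h\tt v)=(\phi(a)\tt h)\oH(1\tt v)$. Writing elements as $\sum_j h_j\tt u_j$ with $\{u_j\}$ a basis of $V$, and translating — through Lemma \ref{HoHLtoHA} and Corollary \ref{coefficienti} — the conditions that all coefficients of $a*(\sum_j h_j\tt u_j)$ lie in a prescribed submodule, or that $\sum_j h_j\tt u_j$ be a weight vector of weight $\psi$, into identities in $H\tt H$ of the form $\phi(a)\tt h_j=(\psi(a)\tt 1)\Delta(h_j)$, one expands $\Delta(h_j)$ by \eqref{delta}: the terms of $\Delta(h_j)-1\tt h_j-h_j\tt 1$ have second tensor factor of strictly smaller degree than $h_j$, so comparing the top-degree parts of the second factor against the canonical filtration of $H$ forces either $\phi=\psi$ or every $h_j\in\kk$ — in which case $\sum_j h_j\tt u_j\in V=P_\phi$. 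Organising this degree bookkeeping so as to handle submodules $A$ of arbitrary rank (by induction on $\dim V$, the rank-one case being a direct divisibility computation) is the technical heart of the argument; alternatively, the lemma and its corollaries may be quoted from the general representation theory of Lie pseudoalgebras in \cite{BDK1}.
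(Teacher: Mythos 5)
Your formal reductions are correct, and in places more detailed than the paper itself, but you take a genuinely different route. The paper disposes of everything except the freeness claim in one stroke, by reinterpreting $M$ as a representation of the annihilation Lie algebra $\mathcal{L}=H^*\otimes_H L$ and importing the classical Lie-algebra facts about generalized weight spaces from \cite{BDK1}; the freeness claim is proved by the same induction via Remark \ref{freeness} that you give. Your route stays inside the pseudoalgebra language and funnels all the content of items (2) and (3) through your displayed lemma: every $L$-submodule $A$ of a finite module $P$ with $P=HP_\phi$ satisfies $A=HA_\phi$.

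That lemma is precisely where your text stops being a proof. The computation you sketch --- writing an element as $\sum_j h_j\otimes u_j$ and deriving $\phi(a)\otimes h_j=(\psi(a)\otimes 1)\Delta(h_j)$, then comparing top degrees --- is the translation of the \emph{weight-vector} condition only; it establishes rigidity statements such as $P_\phi=\kk\otimes V$ and the absence of weight vectors of weight $\psi\neq\phi$, which is enough for the bottom step of your directness induction but not for the lemma itself. What item (3) (and, as you have arranged it, item (2) as well) actually needs is that an arbitrary submodule $A$ \emph{contains} enough $\phi$-weight vectors to $H$-generate it, and the condition ``all coefficients of $a*(\sum_j h_j\otimes u_j)$ lie in $A$'' does not reduce to the displayed identity: one must manufacture weight vectors inside $A$ by a coefficient/degree-reduction argument via Corollary \ref{coefficienti}, of roughly the same order of difficulty as the paper's Proposition \ref{phipsinonzero}. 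You acknowledge this by calling it ``the technical heart'' and offering instead to quote it from \cite{BDK1}. That fallback is legitimate in spirit --- the paper's own proof is likewise a citation --- but as written your argument is not self-contained at its crucial point; either carry out the degree bookkeeping for general rank, cite a precise statement of the submodule lemma, or simply invoke the annihilation-algebra correspondence as the paper does, which yields all five items at once and makes your key lemma unnecessary.
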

\begin{proof}
$M^\phi$ is an $L$-submodule by construction. It can be shown to be a free $H$-module by induction, using Remark \ref{freeness}.
All other statements follow easily from their Lie theoretic analogues by reinterpreting $M$ as a representation of the annihilation Lie algebra ${\mathcal L} = H^*\oH L$, see \cite{BDK1}.
\end{proof}

Notice that the direct sum $\sum\limits_{\phi \in L^*} M^\phi$ may fail to equal $M$. Equality, however, always holds when $L$ is nilpotent.

\begin{thm}\label{nilpotentdecomposition}
Let $L$ be a nilpotent Lie pseudoalgebra over $H$ and $M$ be a (faithful) finite $L$-module. Then $M$ decomposes as a direct sum of its generalized weight submodules, $M=\bigoplus\limits_{\phi \in L^*} M^{\phi}$.
\end{thm}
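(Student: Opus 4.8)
Since the sum $\sum_{\phi \in L^*} M^\phi$ is automatically direct by the preceding Proposition, the real content is the equality $M = \sum_{\phi} M^\phi$. We may assume $M$ is \emph{faithful}: replacing $L$ by its image $\bar L$ in $\gc M$ --- a finite Lie pseudoalgebra, nilpotent as a quotient of $L$ --- changes neither $M$ nor the family of generalized weight submodules, since every weight of $L$ on $M$ factors through $\bar L$. The plan is to transpose to the pseudoalgebra setting the classical argument that a finite-dimensional module over a nilpotent Lie algebra is the direct sum of its generalized weight spaces, reducing one pseudolinear operator at a time to the single-operator case --- that is, to the fact recalled in the introduction and established in \cite{BDK1} that $a \in \gc M$ decomposes $M$ into a direct sum of generalized eigenspaces exactly when $\langle a \rangle$ is nilpotent, which applies here because every cyclic subalgebra $\langle a \rangle \subseteq L$ is nilpotent, being a subalgebra of the nilpotent $L$.

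I would run the induction on the rank of $L$ as an $H$-module. If $L = 0$ then $L^* = \{0\}$ and $M = M^0$; moreover $\rk L = 0$ already forces $L = 0$ in the faithful case, since a torsion element of $\gc M$ acts by zero on $M$ (straighten its action on an arbitrary $v$ and use that $H$ is a domain, together with Remark \ref{coeff}), so $\gc M$, hence $L$, is torsion-free. If $L \neq 0$ is abelian, choose finitely many $H$-module generators $a_1,\dots,a_k$ of $L$; then $[a_i \ast a_j] = 0$, so the decompositions $M = \bigoplus_{\psi} M_{a_i}^{\psi}$ furnished by the single-operator case are mutually compatible --- each $M_{a_i}^{\psi}$ is stable under every $a_j$ because $a_i$ and $a_j$ pseudo-commute --- and their common refinement exhibits $M$ as a direct sum of subspaces each of which is a generalized weight submodule $M^\phi$. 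If $L$ is not abelian, then $\rk L' < \rk L$: were $L'$ of the same rank as $L$, the argument of Remark \ref{torLtorM} applied to the central series would give $L' = L^{[\infty]}$, which vanishes by nilpotence, so $L$ would be abelian. Pick an ideal $I$ with $L' \subseteq I \subsetneq L$ and $\rk I = \rk L - 1$ --- for instance the kernel of a composite $L \twoheadrightarrow L/L' \twoheadrightarrow K$ onto a rank-one quotient $K$; this $I$ is an ideal because $[L, I] \subseteq [L, L] \subseteq I$, and $L/I$ is abelian since $L' \subseteq I$. By the inductive hypothesis $M = \bigoplus_{\mu \in I^*} M_I^{\mu}$, and the key step is that each $M_I^{\mu}$ is stable not merely under $I$ but under all of $L$. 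Granting this, $M_I^\mu$ is an $L$-module on which $I$ acts with the single generalized weight $\mu$, and one finishes by rerunning the abelian argument for the (abelian) action induced by $L/I$ on $M_I^\mu$ after shifting the $I$-action by $\mu$, matching the summands so produced with the $M_L^\phi$.

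The principal obstacle, and the one place where the pseudoalgebra bookkeeping is genuinely required, is the stability claim: \emph{if $I \triangleleft L$ is an ideal and $\mu \in I^*$ is an $I$-weight, then $M_I^{\mu}$ is $L$-stable.} In the Lie algebra case this is a binomial expansion: for $a \in L$ and $v \in M_I^\mu$ one writes $a$ acting against a high power of $b - \mu(b)$, $b \in I$, as a finite sum of terms each carrying an iterated bracket $(\ad a)^j(b) \in I$ --- finite because $(\ad a)^m = 0$ on $L$ for $m \gg 0$ --- so that $a \ast v$ lies, modulo a $\mu$-term, in $(H \tt H) \oH M_I^{\mu}$. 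To make this work here one must compute throughout with \emph{coefficients}: express the relevant iterated actions via Lemma \ref{actioncoeff}, so that $\mu(b)$ is the honest $H$-valued quantity that it is, and check that the binomial-type identities survive being pushed through the coproduct. Everything else --- directness, freeness of $M^\phi$, the reductions above --- rests on results already in hand.

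As an alternative, parallel to the proof of the preceding Proposition, one can pass to the annihilation Lie algebra $\mathcal{L} = H^* \oH L$, which is nilpotent precisely because $L$ is (iterated brackets land in $H^* \oH L^{[n]}$), and invoke the classical generalized weight-space decomposition for the module $M$ over $\mathcal{L}$. Along this route the obstacle migrates to verifying the appropriate finiteness of $M$ as an $\mathcal{L}$-module and the dictionary between weights $\phi \in \Hom_H(L,H)$ and characters of $\mathcal{L}$, both of which are available in \cite{BDK1}.
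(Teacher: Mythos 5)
There is no internal proof to compare against: Theorem \ref{nilpotentdecomposition} is one of the results this section explicitly \emph{recalls} from \cite{BDK1}, and the argument given there runs through the annihilation algebra $\mathcal{L}=H^*\otimes_H L$ and the dictionary between finite $L$-modules and conformal $\mathcal{L}$-modules --- essentially the ``alternative'' route you sketch in your last paragraph, where you defer exactly the two points (finiteness/topology of $M$ over $\mathcal{L}$, and the correspondence between $\phi\in\Hom_H(L,H)$ and characters of $\mathcal{L}$) that constitute the actual work. So the question is whether your main route stands on its own, and as written it does not.

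The step you yourself call the principal obstacle --- that $M_I^{\mu}$ is stable under all of $L$, and likewise that $M_{a_i}^{\psi}$ is stable under each $a_j$ in the abelian case --- is precisely where the pseudoalgebra content of the theorem lives, and it is left undone. The classical binomial computation does not transcribe directly: $M^{\phi}$ is defined by the iterated $H$-span filtration $M^{\phi}_i$, not as the kernel of a power of an operator; $\mu(b)$ is an element of $H$, and iterating the pseudoaction of $b$ lands in $(H^{\otimes n})\otimes_H M$, so ``$(b-\mu(b))^N$'' has no direct meaning and the whole manipulation must be redone with coefficients (Lemma \ref{actioncoeff}) and the coproduct --- this is a lemma to be proved, not a remark. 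Second, the other load-bearing ingredient, the decomposition of $M$ under a single $a$ with $\langle a\rangle$ nilpotent, is exactly the case $L=\langle a\rangle$ of the theorem being proved: in this paper that statement (quoted in the introduction) is a consequence of Theorem \ref{nilpotentdecomposition} together with Theorem \ref{nilpweightsubmod}, and you cite it to \cite{BDK1}, the same source that proves the general theorem, so your argument reduces the statement to a special case of itself plus an unproved stability lemma. Finally, the concluding assembly is looser than it appears: the $L$-action on $M_I^{\mu}$ does not factor through $L/I$, and ``shifting the $I$-action by $\mu$'' does not define a pseudoalgebra action; what is really needed is to check that each piece of the simultaneous refinement is a genuine $M^{\phi}$ for a single $H$-linear $\phi\in\Hom_H(L,H)$ assembled from its values on $I$ and on chosen lifts, compatibly with all $H$-module relations and with torsion (Lemma \ref{alfaoHemme}); none of this is verified. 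Filling the stability lemma and this assembly step honestly, or else carrying out the annihilation-algebra route in detail, is what separates the sketch from a proof.
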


%%%%%%%%%%%%%%%%%%%%%%%%%%%%%

\subsection{Nilpotent pseudoalgebras}

We aim to show that the converse to Theorem \ref{nilpotentdecomposition} also holds, at least when $L$ is finite. We will first prove the result when $M$ coincides with one of its generalized weight spaces with respect to the action of $L$. The general statement will then follow easily.
\begin{prop}\label{Lnilpzero}
Let $M$ be a finite $H$-module and $L\subset \gc M$ be a Lie pseudoalgebra, $\phi \in L^*$, and assume that $M$ coincides with its $\phi$-generalized weight space with respect to the action of $L$. Then $L$ is a nilpotent Lie pseudoalgebra.
\end{prop}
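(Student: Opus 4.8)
The plan is to reduce the pseudoalgebraic statement to its Lie-algebraic counterpart by passing to the annihilation Lie algebra, exactly as suggested by the proof of the preceding proposition, and then to exploit the hypothesis that $M$ is a single generalized weight space. First I would set $\mathcal L = H^* \oH L$, the annihilation Lie algebra of $L$, which acts on the (now possibly infinite-dimensional, but suitably topologized) vector space $M$; pseudolinear endomorphisms of $M$ translate into ordinary linear endomorphisms, and the condition $L \subset \gc M$ says $\mathcal L$ acts faithfully-up-to-the-usual-caveats on $M$. The hypothesis that $M = M^\phi$ with respect to $L$ then says that every element of $\mathcal L$ acts on $M$ as (scalar determined by $\phi$) plus a locally nilpotent operator; more precisely, after twisting by the character coming from $\phi$, every element of $\mathcal L$ acts locally nilpotently on $M$.

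The core Lie-theoretic input I would invoke is an Engel-type theorem: a Lie algebra of locally nilpotent operators on a vector space, under the right finiteness hypotheses (here supplied by $M$ being a finite $H$-module, so that the relevant quotients are finite-dimensional), generates a nilpotent associative action, and hence the Lie algebra itself acts nilpotently in the sense that iterated brackets kill $M$ after boundedly many steps. Concretely, I would use Corollary \ref{corLie}: since $L$ is solvable (being contained in a $\gc M$ with $M = M^\phi$, one first checks $L$ is solvable — or this is part of the standing setup), $M$ has a finite filtration by $L$-submodules whose successive quotients are generated over $H$ by weight vectors, and because $M = M^\phi$ all these weights must equal $\phi$. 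On each successive quotient every $a \in L$ acts through $(\phi(a) \tt 1) \oH (-)$, so the associated graded action of $L$ is "diagonal" with the single weight $\phi$. Therefore, for any $a_0, a_1, \dots, a_{n-1} \in L$ where $n$ is the length of the filtration, the iterated pseudobracket action $[a_0 \ast [a_1 \ast [\cdots [a_{n-1} \ast v]\cdots]]]$, after subtracting the diagonal $\phi$-part at each stage, drops the filtration degree by one each time, hence vanishes on $M$ after $n$ steps.

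To convert this into nilpotence of $L$ as a Lie pseudoalgebra — not merely of its action — I would use faithfulness: $L \subset \gc M$, so the action of $L$ on $M$ is injective in the appropriate pseudoalgebra sense (the map $L \to \gc M$ is a monomorphism). The observation above shows that the operators $\ad a$, $a \in L$, built from the pseudobracket, satisfy an Engel condition relative to the filtration; combined with Lemma \ref{alfaoHemme} (to pass from "$\alpha \oH m = 0$ for enough non-torsion $m$" back to "$\alpha = 0$") and Remark \ref{coeff} (coefficients of a pseudobracket landing in a submodule iff the bracket does), one deduces that the central series $L^{[k]}$ of $L$ terminates. I would be slightly careful here to phrase the Engel argument for the derived/central series of $L$ itself: one shows $[L, [L, [\cdots]]]$ (with $n$ copies, $n$ = length of $M$) acts as zero on $M$, then faithfulness forces $L^{[n]} = 0$. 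The main obstacle I anticipate is precisely this last faithfulness-and-coefficients bookkeeping: translating "the iterated action vanishes on all of $M$" into "the iterated pseudobracket is zero in $(H \tt H) \oH L^{[n]}$" requires carefully tracking coefficients through the straightening of elements of $(H^{\tt k}) \oH L$, and making sure the torsion-freeness hypotheses needed for Lemma \ref{alfaoHemme} are met (which they are, since $L \subset \gc M$ embeds and $\gc M$ acts faithfully on a torsion-reduced quotient of $M$). Everything else is a routine transcription of the classical Engel/Lie argument into the pseudoalgebra language already set up in the preliminaries.
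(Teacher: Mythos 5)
Your argument works verbatim in the case $\phi = 0$ (it is the paper's own warm-up), but the key step for $\phi \neq 0$ is not correct. You claim that, on the filtration of Corollary \ref{corLie}, every $a \in L$ acts as ``a diagonal $\phi$-part plus something strictly lower,'' so that after twisting away the $\phi$-part iterated brackets drop the filtration degree of $M$ by one each time, giving $L^{[n]} = 0$ with $n$ the length of $M$. The problem is that $\phi \in \Hom_H(L,H)$ is $H$-valued, not a $\kk$-valued character, so there is no rank-one twist that converts the action into a locally nilpotent one, and the diagonal parts are not central: if $a \ast m_r \equiv (\phi(a) \tt 1)\oH m_r$ and $b \ast m_r \equiv (h \tt k)\oH m_{r-k}$ modulo lower filtration terms, then
\begin{equation*}
[a \ast b] \ast m_r \equiv \bigl(\phi(a) \tt h \tt k - \phi(a)k_{(1)} \tt h \tt k_{(2)}\bigr)\oH m_{r-k},
\end{equation*}
which does \emph{not} land one step lower in the filtration of $M$; since $\Delta(k) - 1\tt k \in H \tt F^{\deg k - 1}H$, all that happens is that the last tensor factor of the coefficient loses one degree in the Hopf filtration of $H$, while the filtration index $k$ stays put. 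So commutators lower the degree of the $H$-coefficients, not the position in the filtration of $M$, and your induction never gets off the ground for nonzero $\phi$.

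That this is a genuine gap, not a bookkeeping issue, is shown by the paper's own example after Theorem \ref{nilpweightsubmod}: there $M = Hm_1 + Hm_2 = M^\phi$ has length $2$, yet the central series of $L$ stabilizes at $L' \neq 0$, whose elements act nontrivially on $m_2$ --- so no iterated bracket of any length acts as zero on $M$. Your proposed proof never uses finiteness of $L$ and would apply to this example, so it cannot be right; and even for finite $L$ (take $\langle a \rangle$ with $a \ast m_2 = (\phi \tt 1)\oH m_2 + (h \tt k)\oH m_1$ and $\deg k = K$ large) the bound $L^{[n]} = 0$ fails, since one needs roughly $K$ further brackets to kill the coefficient. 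This is exactly what the paper's proof is designed to handle: it introduces the subspaces $N^j_k$ recording both the filtration index $k$ and the Hopf degree $j$ of coefficients, proves $[L^{[p]}, N^j_k] \subset N^{j-p-1}_k$ by a double induction, and only then uses finiteness of $L$ as an $H$-module to obtain a uniform degree bound $d$ with $N_k = N^d_k$, concluding $L^{[(n+1)d+1]} = 0$. Any correct proof must contain some substitute for this degree-tracking argument and must invoke the finiteness of $L$; your faithfulness and coefficient-straightening remarks (Lemma \ref{alfaoHemme}, Remark \ref{coeff}) are fine as far as they go, but they only become relevant once one has shown that some iterated bracket annihilates $M$, which is precisely the point left unproved.
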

\begin{proof}
Let $L^{[0]}=L$, $L^{[i]}=[L,L^{[i-1]}]$, $i \geq 1$, be the the central series of $L$ and $\{M_{k}\}$ be an increasing family of $L$-submodules of $M$ as in Corollary \ref{corLie}.

As a warm up, let us treat the case $\phi \equiv 0$ first. An easy induction shows that $L^{[i]} \cdot M_{k} \subset M_{k-i-1}$. Indeed, as $M = M^0$, we have $L \cdot M_k \subset M_{k-1}$ by our choice of $M_k$. Since $L^{[0]} = L$, this takes care of the basis of induction $i=0$.

Assume now that $L^{[i]} \cdot M_k\subset M_{k-i-1}$. Then
\begin{equation*}
\begin{array}{ll}
L^{[i+1]} \cdot M_k & = [L,L^{[i]}] \cdot M_k =L \cdot (L^{[i]} \cdot M_k) + L^{[i]} \cdot (L \cdot M_k)\\
& \subset L \cdot M_{k-i-1} + L^{[i]} \cdot M_{k-1} \subset M_{k-i-2}.
\end{array}
\end{equation*}
In order to conclude the proof it is enough to observe that if $M = M_n$ then, for $i=n$, we have $L^{[n-1]} \cdot M_n \subset M_0 = (0)$. This implies $L^{[n-1]}=0$, i.e., $L$ is a nilpotent Lie pseudoalgebra.\\

If $\phi \not\equiv 0$, the situation is slightly more delicate. For $1 \leq r \leq m$, let $m_r$ denote the cyclic generators of the quotients $M_r/M_{r-1}$. Set
\begin{equation*}
N^j_k=\{ b \in L \mid b * m_r \in (H \otimes F^{j}H) \oH m_{r-k} + M_{r-k-1}, \, r=1,\dots,N \},
\end{equation*}
and $N_k=\bigcup\limits_{j\in \N} N^j_k$, so that $N_{k}^j\subset N_{k+1}$ if $j<0$. Notice that $N_0 = L, N_{n+1} = (0)$.\\

We aim to prove, by induction on $p$, that $[L^{[p]}, N^j_k] \subset N^{j-p-1}_{k}$. Let us start with the basis of our induction: $[L^{[0]}, N^j_k] = [L, N^j_k] \subset N^{j-1}_k$. Let $a \in L$, $b \in N^j_k$. We know that
$$a*m_r=(\phi \otimes 1) \oH m_r \mod M_{r-1},$$
for $r=1,\dots,n$. Assume that $b*m_r=\sum\limits_i (h^i \otimes k^i) \oH m_{r-k} \mod M_{r-k-1}$, where $k^i \in F^{j}H$ for all $i$. Let us compute
\begin{equation*}
\begin{array}{rl}
[a*b]*m_r & = a*(b*m_r) -  (( \ot \sigma  \id ) \oH \id ){(b*(a*m_r))}\\
& = a* (\sum\limits_i (h^i \otimes k^i) \oH m_{r-k}) - (( \ot \sigma  \id ) \oH \id ) {(b*((\phi \otimes 1) \oH m_r ))}\\
%& = \sum\limits_i (1 \otimes h^i \otimes k^i)(1 \otimes \Delta) (a * m_{i-k}) -  (( \ot \sigma  %\id ) \oH \id ) \sum\limits_i {((1 \otimes \phi \otimes 1)(1 \otimes \Delta) (b* m_i) )}\\
& = \sum\limits_i(\phi \otimes h^i \otimes k^i - \phi k^i_{(1)} \otimes h^i \otimes k^i_{(2)} ) \oH m_{r-k}
\end{array}
\end{equation*}
up to terms in $a \cdot M_{r-k-1} + b \cdot M_{r-1} \subset M_{r-k-1}$. Now recall that, as $k^i \in F^j H$, then $\Delta(k^i) - 1 \tt k^i \in H \tt F^{j-1} H$. We conclude that all coefficients of $[a*b]$ lie in $N^{j-1}_{k}$.\\

As for the inductive step, assume now that $[L^{[p]}, N^j_k] \subset N^{j-p-1}_k$. Then we have
\begin{equation*}
\begin{array}{ll}
[L^{[p+1]}, N^j_k]  = [[L, L^{[p]}], N^j_k] & \subset [L, [L^{[p]}, N^{j}_k] + [L^{[p]}, [L, N^j_k]] \\
&\subset [L, N^{j-p-1}_{k}] + [L^{[p]}, N^{j-1}_k] \subset N^{j-p-2}_k.
\end{array}
\end{equation*}
Since $L$ is a finite Lie pseudoalgebra, there exists $d$ such that $N_k=N^d_k$ for all $k$. Then we obtain
$[L^{[d]}, N_k] \subset N^{-1}_{k} \subset N_{k+1}$. As a consequence, $L^{[(n+1)d + 1]} = [L^{[(n+1)d]}, N_0] =(0)$, which proves that $L$ is a nilpotent Lie pseudoalgebra.
\end{proof}
\begin{thm}\label{nilpweightsubmod}
Let $M$ be a finite faithful module over a finite Lie pseudoalgebra $L$, and assume that $M=\bigoplus\limits_{\phi \in L^*} M^{\phi}$. Then $L$ is a nilpotent Lie pseudoalgebra.
\end{thm}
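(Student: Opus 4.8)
The plan is to reduce the general case to Proposition \ref{Lnilpzero}, which already handles the situation where $M$ equals a single generalized weight submodule. Since $M = \bigoplus_{\phi \in L^*} M^\phi$ and $M$ is finite, only finitely many summands $M^{\phi_1}, \dots, M^{\phi_s}$ are nonzero. Each $M^{\phi_j}$ is an $L$-submodule of $M$ by the Proposition on generalized weight submodules. Let $L_j$ denote the image of $L$ in $\gc M^{\phi_j}$ under the representation map $\rho_j \colon L \to \gc M^{\phi_j}$; this is a Lie pseudoalgebra, and by construction $M^{\phi_j}$ coincides with its $\bar\phi_j$-generalized weight submodule with respect to the action of $L_j$, where $\bar\phi_j$ is the induced weight. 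Proposition \ref{Lnilpzero} then tells us that each $L_j$ is a nilpotent Lie pseudoalgebra, say $L_j^{[n_j]} = (0)$.

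Next I would observe that the combined map $\rho = (\rho_1, \dots, \rho_s) \colon L \to \gc M^{\phi_1} \oplus \cdots \oplus \gc M^{\phi_s}$ has trivial kernel: an element of the kernel annihilates every $M^{\phi_j}$, hence annihilates their direct sum $M$, and since $M$ is faithful this forces the element to be $0$. Therefore $\rho$ is injective, and $L$ embeds into the direct sum $\bigoplus_j L_j$ (one should check that the image lands in the product of the $L_j$, which is immediate since $\rho_j(L) = L_j$). Taking $n = \max_j n_j$, the central series satisfies $L^{[n]} \subset \bigoplus_j L_j^{[n]} = (0)$, because the central series of a subalgebra of a direct sum is contained in the direct sum of the central series, and each $L_j^{[n]}$ vanishes. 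Hence $L^{[n]} = (0)$ and $L$ is nilpotent.

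The only point requiring a little care — and the step I expect to be the main obstacle — is verifying that $M^{\phi_j}$, viewed as a module over the quotient pseudoalgebra $L_j = L/\ker\rho_j$, genuinely satisfies the hypothesis of Proposition \ref{Lnilpzero}, namely that it equals its $\bar\phi_j$-generalized weight submodule \emph{for the action of $L_j$} and not merely for the action of $L$. This is essentially a compatibility check: the defining filtration $(M^{\phi_j})^{\bar\phi_j}_i$ computed with $L_j$ coincides with $(M^{\phi_j})^{\phi_j}_i$ computed with $L$, since the conditions $a * m - (\phi_j(a) \tt 1) \oH m \in (H \tt H) \oH (\cdots)$ depend on $a \in L$ only through its action on $M^{\phi_j}$, i.e., through $\rho_j(a) \in L_j$; and $\bar\phi_j$ is by definition the factorization of $\phi_j$ through $L_j$. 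Once this identification is in place, $M^{\phi_j} = (M^{\phi_j})^{\bar\phi_j}$ as an $L_j$-module, Proposition \ref{Lnilpzero} applies verbatim, and the rest is the elementary embedding argument above. One should also note that each $L_j$ acts on the finite $H$-module $M^{\phi_j}$, so the finiteness hypothesis of Proposition \ref{Lnilpzero} is met.
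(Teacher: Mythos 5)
Your proof is correct and follows essentially the same route as the paper's: restrict $L$ to each (of the finitely many nonzero) generalized weight summands, apply Proposition \ref{Lnilpzero} to each image $L_j \subset \gc(M^{\phi_j})$, and use faithfulness to embed $L$ into the finite direct sum $\bigoplus_j L_j$, which is nilpotent. Your extra verification that the weight $\phi_j$ factors through $L_j$ and that the generalized weight filtration is unchanged is a point the paper leaves implicit, so it is a welcome (and harmless) addition rather than a deviation.
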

\begin{proof}
Observe that under the assumption $M=\bigoplus\limits_{\phi \in L^*} M^{\phi}$ we have $L \subset \bigoplus\limits_{\phi \in L^*} \gc (M^{\phi})$. By Proposition \ref{Lnilpzero}, the image $L_{\phi}$ of $L$ in $\gc (M^{\phi})$ is nilpotent for all $\phi \in L^*$.  As a consequence, $\bigoplus\limits_{\phi \in L^*} L_{\phi}$ is nilpotent as it is a finite sum of nilpotent Lie pseudoalgebras. Finally, $L$ is a nilpotent Lie pseudoalgebra as it embeds in $\bigoplus\limits_{\phi \in L^*} L_{\phi}$.
\end{proof}
\begin{ex}
The finiteness assumption on $L$ in the statement of Theorem \ref{nilpweightsubmod} cannot be removed.

Indeed, let $M=Hm_1+Hm_2$ be a free $H$-module of rank $2$, and choose $L \subset \gc M$ to be the Lie pseudoalgebra of all pseudolinear maps $A \in \gc M$ such that $A*m_1 = (\phi(A) \tt 1) \oH m_1$, $A*m_2 = (\phi(A) \tt 1) \oH m_2 \mod (H \tt H)\oH m_1$, for some $\phi(A)\in H$. Then the central series of $L$ stabilizes to $L'$, which contains all $A$ such that $\phi(A) = 0$.
\end{ex}

Later on, we will deal with finite vertex algebras, and the following pseudoalgebraic analogue of Engel's theorem will turn out to be useful.
\begin{thm}\label{engel}
Let $L$ be a finite Lie pseudoalgebra over $H$. Assume that, for every $a\in L$, the generalized weight submodule $L_a^0$ for the adjoint action of $\langle a \rangle$ equals $L$. Then $L$ is a nilpotent Lie pseudoalgebra.
\end{thm}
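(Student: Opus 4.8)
\emph{The plan} is to mimic the classical proof of Engel's theorem, organising everything so that it feeds into Proposition~\ref{Lnilpzero}. Write $L^0$ for the generalized $0$-weight submodule of $L$ relative to the \emph{full} adjoint action; it is an ideal, being an $\ad L$-submodule of $L$. Since $L^0$ is in particular an $\langle a\rangle$-submodule of $L$ for every $a$, the hypothesis $L_a^0=L$ descends to the quotient $\bar L=L/L^0$, which therefore again satisfies the hypothesis of the theorem; moreover $(\bar L)^0=(0)$, since $(M/M^\phi)^\phi=(0)$ for every finite module $M$ and weight $\phi$. Now, granting that every \emph{nonzero} finite Lie pseudoalgebra satisfying the hypothesis has nonzero centre, the inclusion $Z(\bar L)\subseteq(\bar L)^0=(0)$ forces $\bar L=(0)$, i.e.\ $L^0=L$; then Proposition~\ref{Lnilpzero} (applied to $M=L$ and $\ad L\subseteq\gc L$) shows $\ad L$ is nilpotent, whence $L$ is nilpotent because $\ker\ad=Z(L)$ and $L$ is nilpotent if and only if $L/Z(L)$ is. So the whole theorem reduces to the assertion that a nonzero finite Lie pseudoalgebra satisfying the hypothesis has nonzero centre.

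\emph{I would prove a module version} of this, a pseudoalgebraic ``baby Engel'' lemma: if $L$ is a finite Lie pseudoalgebra satisfying the hypothesis of the theorem and $M\neq(0)$ is a finite $L$-module with $M_a^0=M$ for all $a\in L$, then the invariant space $M_0=\{v\in M\mid a\ast v=0\ \forall a\in L\}$ is nonzero; the centre statement is then the case $M=L$ (adjoint action), since $Z(L)=L_0$. The proof would go by induction on $\rk L$, after a few reductions: by Remark~\ref{torLtorM} torsion elements are annihilated, so $\Tor M\subseteq M_0$ and one may assume $M$ torsion-free and $L\subseteq\gc M$ (faithful action); and if $L$ is torsion then $L\subseteq Z(L)$ is abelian, in which case Theorem~\ref{nilpotentdecomposition} gives $M=\bigoplus_\phi M^\phi$ and disjointness of the summands (a vector of $M^\phi$ with $\phi(a)\neq0$ would also lie in a nonzero-weight generalized weight submodule for $\langle a\rangle$, impossible since $M=M_a^0$) forces $M=M^0$, hence $M_0\neq(0)$. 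For the inductive step ($L\neq(0)$) I would pick a maximal proper subalgebra $K\subsetneq L$ (Noetherianity) and, \emph{assuming $\rk K<\rk L$}, proceed as in the classical argument: $K$ acts on the finite module $L/K$ with $(L/K)_a^0=L/K$ for $a\in K$, so by induction $L/K$ has a nonzero $K$-invariant; hence the normalizer of $K$ properly contains $K$ and thus equals $L$, so $[K,L]\subseteq K$. Then the $K$-invariants $W=\{v\in M\mid a\ast v=0\ \forall a\in K\}$ are nonzero by the inductive hypothesis (for $K$ acting on $M$) and $L$-stable, by a Jacobi-identity computation using $[K,L]\subseteq K$ and the coefficient calculus of Lemma~\ref{actioncoeff}. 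Finally $K$ annihilates $W$, so $L/K$ acts on $W$, and the inductive hypothesis applied to $L/K$ on $W$ (here $\rk(L/K)<\rk L$ in any case) produces a nonzero vector killed by $L/K$, hence by $L$; it lies in $M_0$.

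\emph{The hard part} is precisely the rank bookkeeping that the inductive step above glossed over: a maximal proper subalgebra $K$ of $L$ need not satisfy $\rk K<\rk L$, and this happens exactly when $L/K$ is a torsion $H$-module. In that case I would first observe — using again Remark~\ref{torLtorM}, together with the fact that every $\gc(-)$ is a torsion-free $H$-module (a quick consequence of Lemma~\ref{HoHLtoHA}, since $H$ is a domain) — that $L/K$ is then one-dimensional over $\kk$, that $K$ is automatically an ideal, and that $L/K$ acts trivially on any module it acts on; so the steps ``$K\triangleleft L$'' and ``$L/K$ acts (trivially) on $W$'' still go through, and only the step ``$W\neq(0)$'', i.e.\ the inductive hypothesis for $K$, fails to be covered by a bare rank induction. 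Closing this case requires a secondary induction — either on a length-type invariant measuring how far $K$ sits below $L$, or, writing $L=K+\langle x\rangle$ with $\langle x\rangle$ nilpotent (a consequence of the hypothesis, via Proposition~\ref{Lnilpzero}), by first producing a nonzero $K$-invariant subspace of $M$ and then letting the nilpotent subalgebra $\langle x\rangle$ act on it. Making this torsion-extension recursion well-founded is where the real difficulty lies; once it is in place, the Engel-type argument closes and the theorem follows.
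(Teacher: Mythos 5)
You cannot be checked against an in-paper argument here: the paper states Theorem~\ref{engel} without proof, recalling it as a known pseudoalgebraic analogue of Engel's theorem (cf.\ \cite{BDK1,nilpo}), so your attempt has to stand on its own. Its architecture is sensible, and the easy parts are right: the reduction to showing that $L$ coincides with its generalized $0$-weight submodule for the full adjoint action, followed by Proposition~\ref{Lnilpzero} applied to $\ad L\subset\gc L$ and the observation that $L$ is nilpotent iff $L/Z(L)$ is, works; the hypothesis does pass to subalgebras and quotients by the bullet points of the proposition on generalized weight submodules; torsion elements are central and act trivially on every module; $K$ is indeed an ideal when $L/K$ is torsion, by Remark~\ref{torLtorM} applied to the $K$-module $L/K$; and the $L$-stability of the common $K$-invariants $W$ follows from Jacobi together with Lemma~\ref{actioncoeff}, as you indicate.

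The problem is that the theorem has been reduced precisely to the case you leave open, so this is a genuine gap and not bookkeeping. The induction on $\rk L$ behind your ``baby Engel'' lemma is not well founded: a maximal proper subalgebra $K\subset L$ may satisfy $\rk K=\rk L$ (torsion quotient), and then the inductive hypothesis ``$W\neq(0)$ for $K$ acting on $M$'' is unavailable; you acknowledge this but only gesture at remedies. The specific suggestions do not obviously work: a ``length-type invariant'' is problematic because finitely generated torsion modules over $H=\mathcal{U}(\mathfrak{d})$ need not have finite length once $\dim\mathfrak{d}\geq 2$; the assertion that $L/K$ is one-dimensional over $\kk$ in the torsion case is unjustified for nonabelian $\mathfrak{d}$ (maximality, once $K$ is known to be an ideal with abelian torsion quotient, only yields that $L/K$ is a simple torsion $H$-module, and such modules can be infinite dimensional); and the parenthetical ``$\rk(L/K)<\rk L$ in any case'' fails when $\rk K=0$, where $K$ is central torsion, $W=M$, and the appeal to induction for $L/K$ acting on $W$ becomes circular. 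Until the full-rank (torsion-quotient) case is closed by an actual well-founded recursion --- or by an argument avoiding maximal subalgebras altogether --- the proof is incomplete.
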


\section{Approximate nilpotence of solvable subalgebras of $\gc M$}\label{solvanilpmod}
In this section we present the following result for $1$-generated solvable subalgebras of $\gc M$:
\begin{thm}\label{quasinilpo}
Let $M$ be a finitely generated $H$-module. If $a \in \gc M$ generates a solvable subalgebra $S = \langle a \rangle$, then there exists $\bar a \in S$, $\bar a \equiv a \mod S'$, such that the subalgebra $\langle \bar a \rangle$ is nilpotent.
\end{thm}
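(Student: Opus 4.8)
The plan is to work with the action of $S = \langle a \rangle$ on the finite $H$-module $M$ and to exploit the fact, recalled in Theorem \ref{nilpotentdecomposition} and its converse (Theorem \ref{nilpweightsubmod}), that nilpotence of a subalgebra of $\gc M$ is detected by whether the module decomposes into generalized weight submodules. By Lie's Theorem for pseudoalgebras (Corollary \ref{corLie}), since $S$ is solvable and $M$ is finite, there is a filtration $(0) = M_0 \subset M_1 \subset \cdots \subset M_n = M$ by $S$-submodules whose successive quotients are generated over $H$ by weight vectors, say of weights $\phi_1, \dots, \phi_n \in S^* = \Hom_H(S,H)$. Since $S$ is $1$-generated by $a$, a weight $\phi_r$ is completely determined by the single element $\phi_r(a) \in H$, and — restricting these weights along $a$ — only finitely many distinct polynomials $p_1(\partial), \dots, p_s(\partial) \in H$ occur (where I use the conformal picture of Example \ref{conformal}, so weights of a $1$-generated pseudoalgebra correspond to polynomials in $\partial$, equivalently to the value of the weight on $a$). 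The idea is that the obstruction to $\langle a \rangle$ being nilpotent, rather than merely solvable, is entirely captured by these weights colliding after the $H$-action is taken into account: $a$ fails to decompose $M$ into generalized weight submodules precisely when two of the $\phi_r$ differ by a "shift" that the free $H$-action can absorb. A suitable modification $\bar a = a + x$ with $x \in S' = [S,S]$ acts on each graded piece $M_r/M_{r-1}$ by the same weight $\phi_r$ as $a$ does modulo lower terms (because $x$, lying in the derived subalgebra, has weight $0$ on every weight vector — its coefficients act nilpotently on the associated graded), but $x$ can be chosen to correct the off-diagonal behaviour so that the new weights, read as elements of $H$, become genuinely non-comparable.

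Concretely, the key steps I would carry out are: (1) reduce to the case where $M$ is cyclic, or at least pass to the associated graded and keep track of the "nilpotent part" of the action of $a$; (2) show that the coefficients of $a$ act on $M$ with a block-upper-triangular shape in which the diagonal blocks are scalar multiplications by the $\phi_r(a)$ and the strictly-upper part lands in an ideal whose coefficients act nilpotently — this is where I invoke that $[S,S] \cdot M$ lowers the filtration and that elements of $S'$ act by weight $0$ on weight vectors; (3) identify the precise condition for $\langle a \rangle$ to be nilpotent: by Theorem \ref{engel} (Engel for pseudoalgebras) or directly by the converse to the nilpotent-decomposition theorem, this happens iff $M = \bigoplus_\phi M_a^\phi$, which in turn fails only because of "resonances" among the finitely many weight-values $\phi_r(a) \in H = \kk[\partial]$, i.e. relations of the form $\phi_r(a) = \phi_{r'}(a)$ only up to the ambiguity introduced by freeness of $HM_\phi$ over $M_\phi$ (Remark \ref{freeness}); (4) build $\bar a = a + x$, $x \in S'$, by a finite induction along the filtration, at each step using the freedom in $x$ on the block $M_r/M_{r-1}$ to separate the weight $\phi_r$ from the earlier ones — since the added element lies in $S'$ its own action is nilpotent on the graded pieces, so it does not create new resonances, it only removes the old ones. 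After finitely many corrections every generalized weight submodule of $\langle \bar a \rangle$ is a genuine $H$-module summand, $M$ decomposes as their direct sum, and Theorem \ref{nilpweightsubmod} (applied to the faithful part of the action) yields that $\langle \bar a \rangle$ is nilpotent.

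The main obstacle I anticipate is step (4): controlling that the modification can be performed \emph{uniformly} and \emph{simultaneously} for all the blocks, i.e. that correcting the resonance between blocks $r$ and $r'$ does not reintroduce a resonance with a third block, and that all the corrections can be taken inside $S'$ rather than merely inside $S$. The point to nail down is that the weights $\phi_r(a)$, viewed in $\kk[\partial]$, are only defined up to the natural shift ambiguity, and the "distance" between two weights is a well-defined non-negative integer (a difference of filtration degrees in $H$, coming from how $\Delta(h) - 1 \tt h \in H \tt F^{i-1}H$ behaves) — once one sees that this distance is additive enough along the filtration, a greedy induction from the bottom block upward terminates. A secondary technical point is the passage from $M$ to a faithful module and the handling of torsion in $M$ (Remark \ref{torLtorM}), which affects whether $\gc M$ sees all of $S$; this is routine but must be stated carefully so that the final application of Theorem \ref{nilpweightsubmod} is legitimate. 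I expect the bookkeeping — not any single conceptual leap — to be the real cost of the proof.
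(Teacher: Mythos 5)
Your overall frame agrees with the paper's: use Corollary \ref{corLie} to filter $M$ by $S$-submodules with weight-vector-generated quotients, aim to show that a suitable modification $\bar a$ decomposes $M$ into generalized weight submodules, and conclude nilpotence of $\langle \bar a\rangle$ from Theorem \ref{nilpweightsubmod} (faithfulness is automatic here, since $S\subset\gc M$, so your worry about that point is a non-issue). But the heart of your plan --- steps (3) and (4) --- rests on a mischaracterization of the obstruction and on a mechanism that cannot work. Since $S=Ha+S'$ and every weight $\phi\in S^*$ vanishes on $S'$, any modification $\bar a\equiv a \bmod S'$ has \emph{exactly} the same weights as $a$ on every filtration quotient (the paper records this explicitly); so you cannot ``separate the weight $\phi_r$ from the earlier ones'' or make the weights ``genuinely non-comparable''. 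Moreover, the failure of nilpotence is not a ``resonance'' among the values $\phi_r(a)$: when all the weights coincide, $M=M^\phi$ and $\langle a\rangle$ is automatically nilpotent with no modification at all (Proposition \ref{Lnilpzero}). The obstruction is the opposite configuration: two \emph{distinct} weights $\phi\neq\psi$ whose extension inside $M$ is not split by an $a$-stable complement. The modification's job is to kill the off-diagonal part of the action of $a$ (and possibly to change the lifting of the generator of the upper block), not to move the diagonal.

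Consequently, the real work is missing from your proposal. One must show that if $u$ is a $\phi$-weight vector and $[v]$ a $\psi$-weight vector with $\phi\neq\psi$, then after adding to $a$ suitable coefficients of $[a\ast a]$ (which lie in $S'$) and replacing $v$ by a lifting of the form $v-c_j^{-1}k^ju$, the off-diagonal term $\sum_i(h^i\tt k^i)\oH u$ in $a\ast v$ can be removed entirely. In the paper this is Lemma \ref{tech} together with Proposition \ref{phipsinonzero}, proved by a triple induction on the top degree $K$ of the $k^i$, the top degree $D$ of the $h^i$, and the rank of $\sum_i h^i\tt k^i$, and it hinges crucially on $\alpha=\phi(a)-\psi(a)\neq 0$; the splitting is then propagated along the filtration in Propositions \ref{phipsi} and \ref{quasinilp}. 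Your ``greedy induction on a distance between weights'' has no counterpart to this: the quantity that actually has to decrease lives in the off-diagonal coefficients of $a\ast v$, not in the weights, and your proposal gives no procedure for decreasing it. As written, the argument would stall exactly at the step you flag as the anticipated obstacle.
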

We will later specialize this result to give a characterization of finite vertex algebras.

\subsection{The length $2$ case}%\label{quasinilpres}
Let $M$ be a finite $H$-module and $a \in \gc M$ an element generating a solvable Lie pseudoalgebra $\langle a \rangle=S$.\\
A \emph{modification} of $a \in S$ is an element $\bar{a} \in S$ such that $a \equiv \bar{a} \,\, \mod S'$.
It follows by definition that the subalgebra generated by $\bar{a}$ is a subalgebra of $S$. The same inclusion holds for the corresponding derived subalgebras. As a consequence, a modification of a modification of $a$ is still a modification of $a$.
\begin{rem}
Let $\phi \in S^*$ be a weight for the action of $S = \langle a \rangle$ on $M$. Then $S = Ha + S'$, and the restriction of $\phi$ to $S'$ vanishes. This means that $\phi$ is uniquely determined by $\phi(a)$. As a consequence, $\phi(a)=\phi(\bar{a})$ whenever $\bar{a}$ is a modification of $a$.
\end{rem}
\begin{rem}
Let $M$ be a finite $H$-module, $S$ be a solvable Lie pseudoalgebra generated by $a \in \gc M$ and $N \subset M$ an $S$-submodule. Then $N$ is stable under the action of any modification $\bar{a}$ of $a$, as $\bar a$ belongs to $S$.
\end{rem}
\begin{prop}\label{phipsinonzero}
Let $M$ be a finite $H$-module and $a \in \gc M$ such that $S=\langle a \rangle$ is a solvable subalgebra. Assume that $M = Hu + Hv$, where $u \in M$ is a $\phi-$weight vector and $[v]\in M/Hu$ is a $\psi-$weight vector for the action of $S$, for some $\phi \neq \psi \in S^*$. Then there exists a lifting $\bar v \in M$ of $[v]$ such that $H \bar v$ is a complement of $Hu$ in $M$ and is stable under the action of some modification $\bar a$ of $a$.
\end{prop}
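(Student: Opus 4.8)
The plan is to work with the ``obvious'' first-order correction and show that the obstruction to nilpotence can be absorbed into $S'$. Fix a lifting $v_0 \in M$ of $[v]$; then $M = Hu \oplus Hv_0$ as $H$-modules, since $u$ is a $\phi$-weight vector with $\phi \neq \psi$ (so $u$ is not torsion, cf. Remark \ref{freeness}) and $[v]$ spans $M/Hu$ freely. Writing out the action of $a$ in this basis, we get
\begin{equation*}
a * u = (\phi(a) \tt 1) \oH u, \qquad a * v_0 = (\psi(a) \tt 1) \oH v_0 + \beta, \qquad \beta \in (H \tt H) \oH Hu.
\end{equation*}
The goal is to find $\bar v = v_0 + hu$ (for suitable $h \in H$) and a modification $\bar a = a + c$ with $c \in S'$ such that $\bar a * \bar v \in (H \tt H) \oH H\bar v$, i.e. $H\bar v$ becomes $\bar a$-stable (note that $Hu$ is automatically $\bar a$-stable, being an $S$-submodule). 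Once both $Hu$ and $H\bar v$ are $\bar a$-stable, the action of $\bar a$ on $M$ decomposes into two rank-one pieces, each of which generates an abelian — hence nilpotent — subalgebra of $\gc(Hu)$, resp. $\gc(H\bar v)$; by the embedding argument of Theorem \ref{nilpweightsubmod} (or directly), $\langle \bar a \rangle$ is then nilpotent.

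The key computation is to see how the ``error term'' $\beta$ transforms. First I would change $v_0$ to $\bar v = v_0 + hu$; using $a*u = (\phi(a)\tt 1)\oH u$ and pseudolinearity, $a * (hu) = (1 \tt h)(a * u) = (\phi(a) \tt h)\oH u$, so
\begin{equation*}
a * \bar v = (\psi(a) \tt 1) \oH \bar v + \beta + (\phi(a) \tt h)\oH u - (\psi(a) \tt h)\oH u .
\end{equation*}
Here I want to exploit that $\phi(a) \neq \psi(a)$ as elements of $H$ in a strong enough sense — more precisely, since $\phi \neq \psi$ are $H$-linear maps out of $S = Ha + S'$ agreeing on $S'$, we have $\phi(a) - \psi(a) \neq 0$ in the domain $H$. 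Identifying $(H\tt H)\oH Hu \cong H \tt H$ via the non-torsion generator $u$ (Lemma \ref{alfaoHemme} / Lemma \ref{HoHLtoHA}), I want to choose $h$ so that the $H\tt H$-coefficient of $u$ in $a*\bar v - (\psi(a)\tt 1)\oH\bar v$ becomes, say, of the form $(\xi \tt 1)$ for some $\xi \in H$ — i.e. to use the freedom in $h$ to ``triangularize'' $\beta$ down to a single $H$-term. This is where the precise structure of the Fourier transform $\Fou$ from the Remark after Lemma \ref{HoHLtoHA} enters: the map $h \mapsto (\phi(a)-\psi(a))\tt h$ composed with straightening is injective, and one checks its image, together with $\beta$, can be arranged so that $a * \bar v \equiv (\psi(a)\tt 1)\oH \bar v + (\xi \tt 1)\oH u \pmod{}$ nothing. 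In fact $\xi u$ is a coefficient of $[a \ast \bar v]$... wait — I should instead observe that $a * \bar v - (\psi(a) \tt 1) \oH \bar v \in (H\tt H)\oH Hu$ always has the form $(\eta \tt 1)\oH u$ after choosing $h$ appropriately, because $Hu$ is rank one and free.

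Granting that, the remaining step is the modification of $a$. With $a * \bar v = (\psi(a) \tt 1)\oH \bar v + (\eta \tt 1)\oH u$, set $\bar a = a - c$ where $c \in \gc M$ is the pseudolinear map killing $u$ and sending $\bar v \mapsto (\eta \tt 1)\oH u$ (extended $H$-linearly). Then $\bar a * \bar v = (\psi(a)\tt 1)\oH\bar v$ and $\bar a * u = (\phi(a)\tt 1)\oH u$, so $H\bar v$ is $\bar a$-stable, as wanted. It remains to check $c \in S'$: since $c = \operatorname{ad}(\text{something in }S)$-type element, one computes $[a \ast c]$ or rather exhibits $c$ itself as a coefficient of a bracket of elements of $S$ — concretely, $c$ should equal (a coefficient of) $[a' \ast a'']$ for suitable $a', a'' \in S$, or one argues directly that the ``strictly lower triangular'' pseudolinear map sending $\bar v$ into $Hu$ lies in $[\langle a\rangle, \langle a \rangle]$ because $\phi(a) \neq \psi(a)$ makes $\operatorname{ad} a$ invertible on that piece (this is the pseudoalgebraic analogue of the classical fact that for a $2\times 2$ matrix with distinct eigenvalues the off-diagonal entry lies in the derived algebra). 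I expect \emph{this last point} — showing $c \in S'$, equivalently that the correction term genuinely lives in the derived subalgebra rather than merely in $S$ — to be the main obstacle, since it requires understanding $[\langle a \rangle, \langle a \rangle]$ precisely enough, and the non-self-commutativity of pseudolinear maps means one cannot simply invoke the matrix picture; the key leverage is that $\phi(a)-\psi(a)$ is a \emph{nonzero} element of the domain $H$, so the relevant operator is injective and its cokernel is torsion, which should suffice to place $c$ in $S'$ after passing to the action on the torsion-free module $Hu$.
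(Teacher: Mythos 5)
There is a genuine gap, and it sits exactly where you suspect. Your plan is: change the lifting once ($\bar v = v_0 + hu$) so that the error term becomes a single tensor $(\eta \tt 1)\oH u$, then subtract the pseudolinear map $c$ with $c \ast u = 0$, $c \ast \bar v = (\eta\tt 1)\oH u$, and finally argue $c \in S'$. Both halves of this plan fail as stated. First, the normalization step is not available: by Lemma \ref{HoHLtoHA} the error term lives in $(H\tt H)\oH Hu \cong H\tt H$, while varying the lifting by $hu$ only shifts it by $(\phi(a)\tt h - \psi(a)h_{(1)}\tt h_{(2)})\oH u$ (note the coproduct term, which your formula $(\psi(a)\tt h)\oH u$ drops) --- a family parametrized by a single copy of $H$. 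So ``because $Hu$ is rank one and free'' cannot force the error into the shape $(\eta\tt 1)\oH u$; in general it remains a genuine element $\sum_i (h^i\tt k^i)\oH u$ of $H\tt H$ acting on $u$. (A smaller issue: $M = Hu\oplus Hv_0$ is not automatic either, since only $\phi\neq\psi$ is assumed, one of the two weights may be zero and $u$ may be torsion; the paper's proof and the remark following it are written so as to cover this case.)

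Second, and more seriously, the claim $c \in S'$ is not a checkable afterthought but is the entire content of the proposition, and the heuristic you offer for it is false in this setting. The pseudoalgebraic analogue of ``$\ad a$ is invertible on the off-diagonal block when the eigenvalues differ'' does not hold: the coefficients of $[a\ast a]$ act on $v$ by terms of the form $(\alpha\tt h^i\tt k^i - h^i\tt\alpha\tt k^i)\oH u$ with $\alpha = \phi(a)-\psi(a)$, and when $\alpha$ lies in the $\kk$-span of the $h^i$ this expression degenerates, so the needed correction is \emph{not} in the image of bracketing with $a$ and cannot be absorbed by a modification at all --- the paper's proof deals with precisely this case by changing the lifting ($v' = v - c_j^{-1}k^j u$), and with the other cases by a triple induction (on the degree of the $k^i$, the degrees of the $h^i$ versus that of $\alpha$, and the rank of $\sum_i h^i\tt k^i$), using Lemma \ref{tech} only to produce elements of $S'$ that kill the error \emph{modulo lower-degree terms}, never an exact one-shot inverse. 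Your proposal thus needs the interleaved ``modify $a$ a little, re-lift $v$ a little, induct'' mechanism that constitutes the paper's argument; asserting the existence of a single correcting map $c$ and hoping it lands in $S'$ begs the question.
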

\begin{lemma}\label{tech}
Under the same hypotheses as above, let $b \in \gc M$ be such that $b\ast u = 0, b \ast v = (\beta \tt k) \tt_H u$, where $\beta, k \in H$, and the degree of $k$ is $K$. Then some coefficient $s$ of $[a \ast b]$ satisfies $s \ast u = 0$, $s \ast v = (1 \tt k) \tt_H u \mod (H \tt F^{K-1}H) \oH u$.
\end{lemma}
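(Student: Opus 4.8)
The plan is to compute $[a\ast b]\ast v$ directly using the Jacobi identity for the $\gc M$-action, extract a suitable coefficient via Lemma \ref{actioncoeff}, and control the lower-degree terms. Write $[a\ast b] = \sum_i (h^i\tt 1)\oH c_i$ with the $h^i$ linearly independent; by Remark \ref{coeff} the $c_i$ are exactly the coefficients of $[a\ast b]$, so it suffices to show that a $\kk$-linear combination of the $c_i$ has the stated action. Since $b\ast u = 0$ and $b\ast v = (\beta\tt k)\oH u$, and since $u$ is a $\phi$-weight vector while $[v]$ is a $\psi$-weight vector, we have $a\ast u = (\phi(a)\tt 1)\oH u$ and $a\ast v = (\psi(a)\tt 1)\oH v \bmod Hu$. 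Then
\begin{equation*}
[a\ast b]\ast v = a\ast(b\ast v) - ((\sigma\tt\id)\oH\id)(b\ast(a\ast v)).
\end{equation*}
The first term is $a\ast((\beta\tt k)\oH u) = (\beta\tt k)\cdot(a\ast u) = (\beta\phi(a)_{(1)}\tt k\phi(a)_{(2)}\tt 1)\oH u$ after inserting $a\ast u$ and using $H\tt H$-linearity; the second term, since $b$ kills $Hu$ and $b\ast v$ has all its $M$-part equal to $u$, contributes $((\sigma\tt\id)\oH\id)(\psi(a)\tt\beta\tt k)\oH u = (\beta\tt\psi(a)k\tt 1)\oH u$ — here I must be careful about exactly where $\Delta$ lands, but the point is that both terms are supported on $u$ and their $H\tt H\tt H$-coefficients are polynomials in $\partial$ whose relevant entries are $\beta$ (possibly multiplied by a degree-$0$ or higher piece coming from $\phi(a),\psi(a)$) tensored with $k$ up to lower-degree corrections from the coproduct.

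Next I would apply Lemma \ref{actioncoeff}: choosing $\gamma\in\Hom_\kk(H,H)$ appropriately picks out the coefficient $c_\gamma$ whose action on $v$ is obtained from the expression $\sum_i(\gamma(k^i S(l^i_{(1)}))l^i_{(2)}\tt m^i)\oH v_i$. The key observation is that in every summand of $[a\ast b]\ast v$ the middle tensor slot is $k$ modulo $F^{K-1}H$ (the coproduct $\Delta(k) - 1\tt k\in H\tt F^{K-1}H$ produces only lower-degree terms, and multiplying $k$ on the left by the degree-$\le 0$-in-the-relevant-variable factors coming from $\phi(a)$ does not lower its degree in the leading term), while the $\beta$-dependence sits entirely in the first (or, before straightening, the "$a$") slot. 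Choosing $\gamma$ to be evaluation dual to the element $\beta$ appearing there — more precisely, using linear independence of the $h^i$ to select via $\gamma_i(h^j)=\delta_i^j$ the coefficient $c_i$ tied to the leading $\beta$-term — yields a coefficient $s$ with $s\ast u = 0$ (everything is supported on $u$ and $b\ast u=0$ forces the bracket to kill $u$ as well, since $[a\ast b]\ast u = a\ast(b\ast u) - \dots = 0$) and $s\ast v = (1\tt k)\oH u \bmod (H\tt F^{K-1}H)\oH u$.

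The main obstacle I anticipate is bookkeeping of the coproduct and of the left multiplications by $\phi(a),\psi(a)$: one has to verify that after straightening $(H\tt H\tt H)\oH M$-expressions and applying $\phi_\gamma$, the degree-$K$ part of the middle slot is genuinely $k$ (coefficient $1$ after the right choice of $\gamma$) and that all the $\phi(a)$- and $\psi(a)$-induced factors either cancel or are absorbed into the $\bmod(H\tt F^{K-1}H)\oH u$ error — in particular that they cannot conspire to multiply the leading term by a nonzero scalar other than what the normalization of $\gamma$ accounts for. A secondary point is checking $s\ast u=0$: this is immediate because $b\ast u=0$ and $a\ast u$ is supported on $u$, so $[a\ast b]\ast u$ lies in $(H\tt H)\oH Hu$ with the $M$-part $u$ killed by $b$, giving $[a\ast b]\ast u = 0$, hence every coefficient of $[a\ast b]$ annihilates $u$. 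Once these degree computations are pinned down, the statement follows.
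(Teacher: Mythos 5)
Your overall strategy coincides with the paper's (compute $[a\ast b]\ast v$ from the Jacobi identity, extract a coefficient via Lemma \ref{actioncoeff}, control filtration degrees), and your argument that $s\ast u=0$ is correct: $a\ast(b\ast u)=0$ and $b\ast(a\ast u)=b\ast((\phi(a)\tt 1)\oH u)=0$, so $[a\ast b]\ast u=0$ and every coefficient kills $u$. But the heart of the lemma is missing. The correct computation gives $[a\ast b]\ast v=(\phi(a)\tt\beta\tt k-\psi(a)k_{(1)}\tt\beta\tt k_{(2)})\oH u$ modulo terms whose last tensor factor lies in $F^{K-1}H$, hence $\equiv(\alpha\tt\beta\tt k)\oH u$ with $\alpha=\phi(a)-\psi(a)$; neither of your displayed expressions is this (the first term is $(\phi(a)\tt\beta\tt k)\oH u$, not $(\beta\phi(a)_{(1)}\tt k\phi(a)_{(2)}\tt 1)\oH u$, and similarly for the second), and you explicitly leave the bookkeeping open. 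What you flag as ``the main obstacle'' is in fact the entire content of the lemma: the $\phi(a)$- and $\psi(a)$-contributions neither cancel nor get absorbed into the error; they combine into the factor $\alpha$ sitting in the slot that gets contracted by $\gamma$, and the hypothesis $\phi\neq\psi$ --- which you never invoke --- is exactly what guarantees $\alpha\neq 0$. With this, Lemma \ref{actioncoeff} yields $[a\ast b]_\gamma\ast v=(\gamma(\alpha S(\beta_{(1)}))\beta_{(2)}\tt k)\oH u$ modulo $(H\tt F^{K-1}H)\oH u$, and one chooses $\gamma$ with $\gamma(\alpha S(\beta))=1$ and vanishing in lower filtration degree, so that only the term $\beta\tt 1$ of $\Delta(\beta)$ survives and the leading part is $(1\tt k)\oH u$. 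If $\alpha$ were zero no such $\gamma$ would exist, and indeed the statement fails when $\phi=\psi$; so a proof that does not use $\phi\neq\psi$ cannot be complete.

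A second, related problem is your recipe for selecting the coefficient. Taking ``$\gamma$ dual to $\beta$,'' or picking a single straightened coefficient $c_i$ from $[a\ast b]=\sum_i(h^i\tt 1)\oH c_i$ via $\gamma_i(h^j)=\delta_i^j$, does not in general produce the normalized leading term: a single $c_i$ may well act as $(\beta'\tt k)\oH u$ with $\beta'$ of positive degree, and such an element is not congruent to $(1\tt k)\oH u$ modulo $(H\tt F^{K-1}H)\oH u$ when $u$ is not torsion, since that error space only perturbs the second slot in degrees below $K$. The element $s$ you need is the coefficient $[a\ast b]_\gamma$ for the specific functional $\gamma$ normalized against $\alpha S(\beta)$ as above, in general a $\kk$-linear combination of the $c_i$ rather than one of them. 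Once you (i) redo the two Jacobi terms correctly, (ii) isolate the leading term $(\alpha\tt\beta\tt k)\oH u$ using $\phi\neq\psi$, and (iii) choose $\gamma$ dual to $\alpha S(\beta)$ and vanishing on lower degrees, your outline becomes the paper's proof.
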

\begin{proof}
A direct computation gives
\begin{equation*}
\begin{array}{rl}
[a*b] * v & = (\phi(a) \otimes \beta \otimes k- \psi(a) k_{(1)} \otimes \beta \otimes k_{(2)})\oH u\\
& = \qquad (\alpha \tt \beta \tt k) \oH u,
\end{array}
\end{equation*}
up to terms in $(H \tt F^{K-1}H) \oH u$. By Lemma \ref{actioncoeff}, we have
\begin{equation*}
[a \ast b]_\gamma \ast v = \gamma(\alpha S(\beta_{(1)}))\beta_{(2)} \tt k) \oH u \mod (H \tt F^{K-1}H) \oH u.
\end{equation*}
It now suffices to choose $\gamma \in \Hom_\kk(H, H)$ so that $\gamma(\alpha S(\beta))$ equals $1$, and requiring that it vanish on all terms of lower degree.
\end{proof}

\begin{proof}[Proof of Proposition \ref{phipsinonzero}]
We know that
\begin{equation*}
\begin{array}{lll}
a* u= & (\phi(a) \otimes 1) \oH u & \quad\\
a* v= & (\psi(a) \otimes 1) \oH v & \mod (H \tt H) \oH u.
\end{array}
\end{equation*}
We may then find $K \in \N$, and linearly independent elements $k^i \in H$ of degree $K$, such that
\begin{equation*}
a* v= \sum_i (h^i \otimes k^i) \oH u + (\psi(a) \otimes 1) \oH v \mod (H \tt F^{K-1} H) \tt_H u.
\end{equation*}
We may assume that the $h^i$ are linearly independent as well.

By a direct computation we obtain, modulo terms in $(H \tt H \tt F^{K-1} H) \oH u$,
\begin{equation*}
\begin{array}{rl}
a*(a*u) = & (\phi(a) \tt \phi(a) \tt 1) \oH u,\\
a*(a*v)= & \sum\limits_i ( \phi(a) \otimes h^i \otimes k^i ) \oH u + \sum\limits_i (h^i \otimes \psi(a) k^i_{(1)} \otimes  k^i_{(2)}) \oH u + (\psi(a) \otimes \psi(a) \otimes 1) \oH v\\
= & \sum\limits_i (\phi(a) \otimes h^i \otimes k^i ) \oH u + \sum\limits_i (h^i \otimes \psi(a) \otimes  k^i) \oH u + (\psi(a) \otimes \psi(a) \otimes 1) \oH v,
\end{array}
\end{equation*}
so that $[a \ast a] * u = 0$ and
\begin{equation*}
[a *a] * v = \sum_i (\alpha \otimes h^i \otimes k^i - h^i \otimes \alpha \otimes k^i ) \oH u \mod (H \tt H \tt F^{K-1}H)\oH u,
\end{equation*}
where $\alpha = \phi(a) - \psi(a) \in H$ is a nonzero element of degree $N$. Let $D$ be the maximal degree of the $h^i$.

We now proceed by induction on $K$, on $D$ and on the rank of $\sum_i h_i \tt k_i \in H \tt H$. We distinguish three cases:
\begin{enumerate}
\item $D > N$. Then we may choose $\gamma\in \Hom_\kk(H, H)$ such that $\gamma(\alpha) = 1$ and obtain
\begin{equation*}
\begin{array}{rl}
[a\ast a]_\gamma \ast v = & \sum_i (\gamma(\alpha S(h^i_{(1)}))h^i_{(2)} \tt k^i - \gamma(h^iS(\alpha_{(1)})
\alpha_{(2)} \tt k^i) \tt_H u,\\
= & \sum_i (h^i \tt k^i) \oH u
\end{array}
\end{equation*}
modulo terms in $(F^{N-1} H \tt F^K H + H \tt F^{K-1}H)\tt_H u$. The modification $a - [a \ast a]_\gamma$ then leads to a coefficient $\sum_i h^i \tt k^i$ of lower degree in the first tensor factor.\\

\item $N \geq D$ and $\alpha \notin \spn_\kk\langle h^i\rangle$. Choose $\gamma_i$ such that $\gamma_i(h^i) = -\delta_i^j, \gamma(\alpha) = 0$. Then
\begin{equation*}
\begin{array}{rl}
[a\ast a]_{\gamma_i} \ast v = & \sum_i (\gamma(\alpha S(h^i_{(1)}))h^i_{(2)} \tt k^i - \gamma(h^iS(\alpha_{(1)})) \alpha_{(2)} \tt k^i) \tt_H u,\\
= & (\alpha \tt k^i) \tt_H u,
\end{array}
\end{equation*}
modulo terms in $(F^N H \tt k^i + H \tt F^{K-1}) \oH u$. This shows that some coefficient $b$ of $[a \ast a]$ acts on $v$ so that
$$b \ast v = (\beta \tt k^i) \tt_H u \mod (H \tt F^{K-1}H)\oH u,$$
for some nonzero $\beta \in H$. By Lemma \ref{tech} we may then find, for each $i$, some element $s_i \in S'$ such that
$$s_i \ast v = (1 \tt k^i) \oH u \mod (H \tt F^{K-1}H) \oH u.$$
The element $a - \sum_i h^i s_i$ is then a modification of $a$ leading to a lower value of $K$.\\

\item $N \geq D$ and $\alpha \in \spn_\kk\langle h^i \rangle$. In this case we can find $c_i \in \kk$ such that $\alpha = \sum_i c_i h^i$. Choose $j$ so that $c_j \neq 0$, and set $v' = v - c_j^{-1} k^j u$. Then
\begin{equation*}
\begin{array}{ll}
a*v' & =-(\phi(a) \otimes c_j^{-1} k^j) \oH u + \sum\limits_i (h^i \otimes k^i)\oH u + (\psi(a) \otimes 1) \oH v\\
& = (-\phi(a) \otimes c_j^{-1} k^j + \sum\limits_i h^i \otimes k^i)\oH u + (\psi(a) \otimes 1) \oH (c_j^{-1} k^j u)+ (\psi(a) \otimes 1) \oH v'\\
& = (-c_j^{-1}\phi(a) \otimes  k^j + \sum\limits_i h^i \otimes k^i)\oH u + (c_j^{-1}\psi(a)k^j_{(1)} \otimes k^j_{(2)}) \oH u+ (\psi(a) \otimes 1) \oH v'\\
& =((h^j - c_j^{-1}\alpha ) \otimes k^j + \sum\limits_{i \neq j} (h^i \otimes k^i) ) \oH u + (\psi(a) \otimes 1) \oH v',
\end{array}
\end{equation*}
modulo terms in $(H \tt F^{K-1} H) \oH u$. The element $h^j - c_j^{-1}\alpha$ is a linear combination of the $h^i, i \neq j$, and so the rank of the $H \tt H$-coefficient multiplying $u$ is lower than that of $\sum_i h^i \tt k^i$.
\end{enumerate}
We may now apply induction.
\end{proof}
\begin{rem}
Notice that, in the above proof, the coefficient $\sum_i h^i \tt k^i$ is not uniquely determined, in case $u$ is a torsion element of $M$. However, the proof works equally well for any given choice of such a coefficient.
\end{rem}

\subsection{Proof of the general statement}

\begin{prop}\label{phipsi}
Let $M$ be a finite $H$-module and $a \in \gc M$ such that $S=\langle a \rangle$ is a solvable subalgebra. Assume that $\phi \neq \psi \in S^*$ are such that $M/M^{\phi}={(M/M^{\phi})}^{\psi}$. Then there exists an $H$-submodule $\overline M \subset M$ which is a complement to $M^\phi$ and is stable under some modification of $a$.
\end{prop}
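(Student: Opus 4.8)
The plan is to bootstrap from the length-$2$ case, Proposition~\ref{phipsinonzero}, by two nested inductions: a first one that whittles the generalized weight submodule $M^\phi$ down to a single cyclic layer $Hu$, and a second one that whittles the complementary part down to a single cyclic layer, at which point Proposition~\ref{phipsinonzero} applies directly. Throughout I would use the standard behaviour of generalized weight submodules recalled in Section~3 (for $W\subseteq M^\phi$ one has $(M/W)^\phi=M^\phi/W$; submodules and quotients of generalized $\psi$-weight modules are again such; distinct generalized weight submodules meet trivially), the fact that a weight of $S=\langle a\rangle$ is determined by its value on $a$, and Corollary~\ref{corLie}.

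\emph{First reduction.} Induct on the length of the $S$-module $M^\phi$. If $M^\phi=0$ take $\overline M=M$, and if $M^\phi=M$ take $\overline M=0$. If $M^\phi$ is nonzero and cyclic, say $M^\phi=Hu$ with $u$ a $\phi$-weight vector, appeal to the cyclic case below. Otherwise, by Corollary~\ref{corLie} choose an $S$-submodule $W=Hu\subseteq M^\phi$ with $u$ a $\phi$-weight vector; then $(M/W)^\phi=M^\phi/W$ is a proper nonzero submodule of $M/W$ and $(M/W)/(M/W)^\phi=(M/M^\phi)^\psi$, so $\phi\neq\psi$ survives as a pair of weights for the induced action and the induction hypothesis produces an $H$-submodule $\overline M_1\subseteq M/W$ complementary to $M^\phi/W$ and stable under a modification of the induced generator. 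As $W$ is an $S$-submodule, this modification lifts to a modification $a_1$ of $a$; the preimage $M_1\subseteq M$ of $\overline M_1$ then satisfies $M_1\cap M^\phi=W$, $M_1+M^\phi=M$, is stable under $a_1$ (hence under all of $\langle a_1\rangle$), has $(M_1)^\phi=W$ cyclic, and has $M_1/W$ a generalized $\psi$-weight module. Any complement to $W$ in $M_1$ stable under a modification of $a_1$ is then automatically a complement to $M^\phi$ in $M$ stable under a modification of $a$, so it suffices to treat the cyclic case $M^\phi=Hu$.

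\emph{The cyclic case.} Now $M^\phi=Hu$ with $u$ a $\phi$-weight vector and $M/Hu=(M/Hu)^\psi$; I would induct on the length of $M/Hu$. If this length is $\le 1$ the statement is either trivial ($M=Hu$) or exactly Proposition~\ref{phipsinonzero} ($M=Hu+Hv$). Otherwise, applying Corollary~\ref{corLie} to $M/Hu$, peel off a top layer: an $S$-submodule $P\supseteq Hu$ with $M/P$ cyclic, generated by a $\psi$-weight vector, and $P/Hu$ of strictly smaller length. One checks $P^\phi=Hu$ and $P/Hu=(P/Hu)^\psi$, so the induction hypothesis gives a complement $Q$ to $Hu$ in $P$ stable under a modification $a_1$ of $a$. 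Since $Q$ is then stable under all of $\langle a_1\rangle$, the quotient $M/Q$ carries a $\langle a_1\rangle$-action, and $M/Q=H\bar u+H\bar z$ where $\bar u$ (the image of $u$) is a $\phi$-weight vector, $[\bar z]$ is a $\psi$-weight vector modulo $H\bar u=P/Q$, and $\phi\neq\psi$ because $\phi(a)\neq\psi(a)$. Proposition~\ref{phipsinonzero} applied to $M/Q$ produces a complement $\overline R$ to $H\bar u$ stable under a modification $\bar a_2$ of the image of $a_1$; lifting $\bar a_2$ to a modification $a_2$ of $a$ that still stabilizes $Q$ (possible because $\langle a_1\rangle$ stabilizes $Q$ and surjects onto its image in $\gc(M/Q)$, while derived subalgebras of modifications of $a$ lie in $S'$), the preimage $R\subseteq M$ of $\overline R$ satisfies $R\cap Hu=0$ and $R+Hu=M$ and is stable under $a_2$. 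This is the required complement.

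\emph{Main difficulty.} The computations are routine once this scaffolding is in place; the delicate part will be the bookkeeping of modifications under the various quotients. The key points are: (i) an $H$-submodule stable under a single element $b$ is automatically stable under the whole subalgebra $\langle b\rangle$, so that quotients by such subspaces inherit a pseudoalgebra action; (ii) a modification of the generator induced on such a quotient lifts to a genuine modification of $a$ that can be chosen to stabilize the subspace being quotiented, using that $\langle a_1\rangle$ surjects onto its image and that $\langle\bar a\rangle'\subseteq S'$ for every modification $\bar a$ of $a$; and (iii) the formation of generalized weight submodules, and the inequality $\phi\neq\psi$, are preserved along all the submodules and quotients that appear. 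Tracking these compatibilities simultaneously, rather than any single calculation, is where the care lies.
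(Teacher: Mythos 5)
Your argument is correct and takes essentially the same route as the paper: both bootstrap from Proposition \ref{phipsinonzero} via a double induction on the lengths of $M^\phi$ and $M/M^\phi$, using the same quotient-and-pull-back manipulation of complements and the same bookkeeping of modifications along $S$-stable subquotients. The only difference is organizational: you shrink $M^\phi$ to a single cyclic layer $Hu$ first and then induct on the length of $M/Hu$, applying the length-two case to the quotient $M/Q$, whereas the paper first settles the case where $M/M^\phi$ has length one (applying Proposition \ref{phipsinonzero} inside a submodule) and then reduces the general case to it by inducting on the length of $M/M^\phi$.
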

\begin{proof}
We start by considering the case when the length of the $S$-module $M/M^\phi$ is $1$, and proceed by induction on the length $n$ of $M^\phi$. The basis of induction $n=1$ is provided by Proposition \ref{phipsinonzero}, so we assume that the length of $M^\phi$ equals $n> 1$.

Choose $u \in M_\phi$ such that $M^\phi/Hu$ has length $n-1$. We use induction on the $S$-module $M/Hu$ to find a complement $N/Hu$ to $M^\phi/Hu = (M/Hu)^\phi$ which is stable under the action of some modification $\tilde a$ of $a$. Notice that $N^\phi = N \cap M^\phi = Hu$ and that $N/Hu$ is isomorphic to $(M/Hu)/(M^\phi/Hu) \simeq M/M^\phi$, hence we may apply Proposition \ref{phipsinonzero} to $N$ and find a complement $M' \subset N$ to $Hu$ which is stable under some modification $\bar a$ of $\tilde a$. Now, $M^\phi + M' = M^\phi + Hu + M' = M^\phi + N = M$; moreover $M^\phi \cap M' \subset M^\phi \cap N = Hu$ so that $M^\phi \cap M' \subset M' \cap Hu = (0)$. We
conclude that $M'$ is a complement of $M^\phi$ in $M$ that is stable under the action of $\bar a$, which is a modification of $a$.\\

We proceed now with proving the statement when the length $m$ of $M/M^\phi$ is greater than $1$. Choose $\overline N = N/M^\phi \subset M/M^\phi$ of length $m-1$ so that $M/N$ has length $1$; as $M/M^\phi = (M/M^\phi)^\psi$, then $\overline N = \overline N^\psi$. Since $N^\phi = N \cap M^\phi = M^\phi$, we may use induction to find an $H$-submodule $N' \subset N$ which is a complement to $M^\phi$ and is stable under some modification $a'$ of $a$.

Consider now the quotient $M' = M/N'$. Then $(M')^\phi$ certainly contains the image $(M^\phi + N')/N'$ of $M^\phi$ under the canonical projection $\pi: M \to M/N'$. Moreover, $(M/N')/((M^\phi + N')/N')$ is isomorphic to $M/(M^\phi + N')$ and is therefore a quotient of $M/M^\phi$, which equals its $\psi$-generalized weight space. As $\psi \neq \phi$, we conclude that $(M')^\phi = (M^\phi + N')/N'$, and that $M'/(M')^\phi \simeq M/(M^\phi + N') = M/N$ has length one. We may then find a complement $\overline{M}/N'$ of $(M')^\phi$ in $M'$ which is stable under some
modification $\bar a$ of $a'$. We claim that $\overline M$ is a complement of $M^\phi$ in $M$.

Indeed, $\overline{M}/N' + (M')^\phi = M'$, hence $\overline{M} + (M^\phi + N') = M$; as $N' \subset \overline{M}$, we conclude that $M = \overline{M} + M^\phi$. On the other hand, $\overline{M} \cap M^\phi = N'$, hence $\overline{M} \cap M^\phi \subset N' \cap M^\phi = (0)$.
\end{proof}
We are now ready to prove our central result.
\begin{prop}\label{quasinilp}
Let $M$ be a finite $H$-module and $S$ be a solvable Lie pseudoalgebra generated by $a \in \gc M$. Then there exists a modification $\bar{a}$ of $a$ such that $M$ decomposes as a direct sum of generalized weight modules with respect to $\overline S = \langle \bar a\rangle$.
\end{prop}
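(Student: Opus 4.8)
The plan is to induct on the length $n$ of $M$ as an $S$-module (Corollary~\ref{corLie}), peeling off generalized weight submodules one at a time while never leaving the coset $a+S'$. Call a module \emph{$\chi$-pure} if it coincides with its $\chi$-generalized weight submodule; recall from the Proposition on generalized weight submodules that every $M^\chi$ is $\chi$-pure, that $M^\chi$ is the largest $\chi$-pure submodule of $M$, and (hence) that an extension of a $\chi$-pure module by a $\chi$-pure one is again $\chi$-pure. When $n\le 1$ the module $M$ is generated over $H$ by a single weight vector of some weight $\phi$, hence $\phi$-pure, and $\bar a=a$ works.

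For $n\ge 2$, fix a flag $(0)=M_0\subset\dots\subset M_n=M$ with $M_{i+1}/M_i$ generated over $H$ by a weight vector; then $M_1$ is an $S$-submodule which is $\phi_1$-pure for some $\phi_1\in S^*$, and $M/M_1$ has length $n-1$. Apply the inductive hypothesis to $M/M_1$, with the element of $\gc(M/M_1)$ induced by $a$ (it generates the image of $S$, which is solvable), and lift the resulting modification back into $S$: this yields a modification $a_1$ of $a$ with $M/M_1=\bigoplus_\psi (M/M_1)^\psi_{a_1}$, the sum over the finitely many distinct weights $\psi$. Let $P_\psi\subseteq M$ be the preimage of the summand $(M/M_1)^\psi_{a_1}$; each $P_\psi$ is $\langle a_1\rangle$-stable, contains $M_1$, has $\psi$-pure quotient $P_\psi/M_1$, and $\sum_\psi P_\psi=M$ with the sum direct modulo $M_1$.

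Two bookkeeping facts now drive the argument: a $\chi$-pure submodule lies in the ambient $\chi$-generalized weight submodule; and if $N$ is $\chi$-pure for a subalgebra $S_0$ and $T\subseteq S_0$ is a subalgebra, then $N$ is $\chi$-pure for $T$ as well, since passing to a smaller algebra only enlarges generalized weight submodules while $N$ bounds them from above. In particular $M_1$ stays $\phi_1$-pure, and every $P_\psi/M_1$ stays $\psi$-pure, relative to any subalgebra of $\langle a_1\rangle$ stabilizing it; consequently, for each $\psi\ne\phi_1$ the $\phi_1$-generalized weight submodule of $P_\psi$ (computed for $\langle a_1\rangle$ or for any subalgebra thereof preserving $P_\psi$) is exactly $M_1$ --- the inclusion $M_1\subseteq (P_\psi)^{\phi_1}$ by purity of $M_1$, the equality because the $\psi$-pure quotient $P_\psi/M_1$ has trivial $\phi_1$-generalized weight submodule --- while $P_\psi/M_1$ is $\psi$-pure. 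So Proposition~\ref{phipsi}, applied inside $P_\psi$, splits off a complement $R_\psi$ of $M_1$ in $P_\psi$ stable under some modification. Enumerate the weights $\psi\ne\phi_1$ as $\psi_1,\dots,\psi_k$ and process them in turn, starting from $b_0=a_1$: given $b_{j-1}$, apply Proposition~\ref{phipsi} to $P_{\psi_j}$ using $b_{j-1}|_{P_{\psi_j}}$ to obtain a complement $R_{\psi_j}$ of $M_1$ in $P_{\psi_j}$ stable under a modification $b_j$ of $b_{j-1}$; since $\langle b_j\rangle\subseteq\langle b_i\rangle$ for $i<j$, every previously built $R_{\psi_i}$ remains $\langle b_j\rangle$-stable, and since $M_1$ and the $P_\psi$ are $\langle a_1\rangle$-stable while purity survives shrinking of the algebra, Proposition~\ref{phipsi} keeps applying at each step. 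Put $\tilde a=b_k$, a modification of $a$. Then $M=P_{\phi_1}+\sum_i R_{\psi_i}$ (read $P_{\phi_1}$ as $M_1$ if $\phi_1$ is not a weight of $M/M_1$), a sum of submodules that are, respectively, $\phi_1$-pure and $\psi_i$-pure for $\langle\tilde a\rangle$, with pairwise distinct weights; as the sum of the generalized weight submodules of $M$ for $\langle\tilde a\rangle$ is always direct, each summand must equal the corresponding full generalized weight submodule, so $M=\bigoplus_\chi M^\chi_{\tilde a}$.

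I expect the main obstacle to be exactly this last orchestration: a complement furnished by Proposition~\ref{phipsi} is only guaranteed stable under one specific modification, so one must ensure that every later modification is confined to the subalgebra that already stabilizes it. This is the reason for running the induction through $M/M_1$, with $M_1$ a fixed, $S$-stable, length-one submodule, rather than through $M/M^\phi$ for a weight $\phi$ of $M$: the former makes every quotient $P_\psi/M_1$ pure, so Proposition~\ref{phipsi} applies with no further reduction, and the objects $M_1,P_\psi$ reused at later stages are $\langle a_1\rangle$-stable, hence automatically stable under every subsequent (necessarily smaller) modification.
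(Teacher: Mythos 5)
Your proposal is correct and follows essentially the same route as the paper's proof: induct on the length of $M$, quotient by the bottom piece $M_1 = Hu$ of a flag, pull back the inductive decomposition of $M/M_1$, and repeatedly apply Proposition~\ref{phipsi} to the preimages with weight different from that of $u$, keeping track that each successive modification generates a smaller subalgebra so earlier complements stay stable. Your write-up merely makes explicit the bookkeeping (purity under shrinking subalgebras, $(P_\psi)^{\phi_1}=M_1$, persistence of distinct weights under modification) that the paper leaves implicit.
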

\begin{proof}
By induction on the length of $M$. If the length equals $1$, then $M = M^\phi$ for some $\phi$ and there is nothing to prove.

Let us assume that the length of $M$ is $n>1$. We may find a weight vector $u \in M_\phi$ such that $N = M/Hu$ has length $n-1$. By inductive assumption, $N$ decomposes as a direct sum $N^{\phi_1} \oplus \dots \oplus N^{\phi_r}$ of (non trivial) generalized weight modules with respect to the subalgebra $\widetilde S\subset S$ generated by some modification $\tilde a$ of $a$. Let $N^i$ be the preimage of $N^{\phi_i}$ under the canonical projection $\pi: M \to M/Hu$, and reorder indices so that $\phi_i \neq \phi$ for all $i \neq r$.

As long as $\phi_k\neq \phi$, we may repeatedly apply Proposition \ref{phipsi} to obtain complements $M^i$ to $(N^i)^\phi = Hu$ in $N^i$ so that the sum $M^1 + \dots + M^k$ is direct and all summands are invariant with respect to some iterated modification of $a$. If $\phi_r \neq \phi$ holds as well, we end up with $M = M^1 \oplus \dots \oplus M^{r-1} \oplus M^r \oplus Hu$; if instead $\phi_r = \phi$, then $M = M^1 \oplus \dots \oplus M^{r-1} \oplus N^r$. In both cases, all summands are generalized weight spaces by construction, and are stable with respect to some modification $\bar a$ of $a$.
\end{proof}
In the light of Theorem \ref{nilpweightsubmod}, we see that Theorem \ref{quasinilpo} is just a restatement of Proposition \ref{quasinilp}.\\

Let $\bar a$ be a modification of $a$ generating a nilpotent subalgebra of $\gc M$. A natural question to ask is whether the decomposition $M = \bigoplus M_{\bar a}^\phi$ depends on $\bar a$ or is instead canonical. This amounts to asking if all such modifications of $a$ are contained in a single nilpotent subalgebra of $\langle a \rangle$. We will answer this in the negative at the very end of the paper.

\begin{cor}
Let $L$ be a Lie pseudoalgebra over $H$. If $a \in L$ generates a finite solvable subalgebra, then some modification of $a$ generates a nilpotent subalgebra.
\end{cor}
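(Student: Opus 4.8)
The plan is to reduce the statement to Theorem \ref{quasinilpo} by passing to the adjoint representation. Let $a \in L$ generate a finite solvable subalgebra $S = \langle a \rangle \subset L$. The subtlety is that $\gc M$-type results apply to pseudolinear endomorphisms of a \emph{finite} $H$-module, whereas $L$ itself may be infinite; but $S$ is finite, so we should work with the adjoint action of $S$ on $S$ (or on a suitable finite module) rather than on all of $L$.

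First I would consider the adjoint homomorphism $\ad\colon S \to \gc S$, which is defined since $S$ is finite; its kernel is the centre $Z(S)$. Set $f = \ad a \in \gc S$. The subalgebra $\langle f \rangle \subset \gc S$ is the image of $S = \langle a \rangle$ under $\ad$, hence a quotient of $S$, hence solvable. By Theorem \ref{quasinilpo} applied to the finite $H$-module $M = S$, there exists $\bar f \in \langle f \rangle$ with $\bar f \equiv f \bmod \langle f \rangle'$ such that $\langle \bar f \rangle \subset \gc S$ is nilpotent. Now $\langle f \rangle' = \ad(S')$ because $\ad$ is a surjective homomorphism onto $\langle f \rangle$, and $\langle f \rangle = \ad(S)$, so we may write $\bar f = \ad \bar a$ for some $\bar a \in S$ with $\bar a \equiv a \bmod S'$; this $\bar a$ is the desired modification at the level of $L$.

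It remains to deduce that $\langle \bar a \rangle \subset L$ is nilpotent from nilpotence of $\langle \ad \bar a \rangle \subset \gc S$. Here I would argue as follows: $\ad$ restricted to $\langle \bar a \rangle$ lands in $\langle \ad \bar a \rangle$ — indeed $\ad(\langle \bar a\rangle) = \langle \ad \bar a\rangle$ since $\ad$ is a homomorphism sending the generator to the generator — and its kernel is $\langle \bar a \rangle \cap Z(S)$, which is a central ideal of $\langle \bar a\rangle$ (being contained in the centre $Z(S)$ and intersected with a subalgebra, it is annihilated by bracketing with $\langle\bar a\rangle \subseteq S$). As recalled in the excerpt, a finite Lie pseudoalgebra is nilpotent if and only if its quotient by a central ideal is nilpotent; since $\langle\bar a\rangle/(\langle\bar a\rangle\cap Z(S)) \hookrightarrow \langle\ad\bar a\rangle$ is nilpotent and $\langle \bar a \rangle$ is finite (being a subalgebra of the Noetherian module underlying $S$), we conclude $\langle \bar a \rangle$ is nilpotent.

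The main obstacle I anticipate is the bookkeeping around ``modification mod $S'$'' surviving the passage through $\ad$: one must check carefully that $\langle \ad a\rangle' = \ad(S')$ and that a lift $\bar a$ of $\bar f$ can be chosen inside $S$ rather than merely in $L$ — the latter is automatic since $\bar f \in \langle f\rangle = \ad(S)$, so any preimage in $S$ works, but one should note that replacing such a preimage by $\bar a + z$, $z \in Z(S)$, does not affect $\langle\bar a\rangle$ modulo the central ideal and hence does not affect nilpotence. Everything else is a routine application of the already-established correspondence between nilpotence of a finite Lie pseudoalgebra and nilpotence of its image under the adjoint map, together with the central-ideal criterion.
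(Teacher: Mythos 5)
Your proof is correct and follows essentially the same route as the paper: pass to the adjoint homomorphism $\ad\colon S \to \gc S$, apply Theorem \ref{quasinilpo} to the solvable subalgebra $\langle \ad a\rangle \subset \gc S$, lift the resulting modification to $\bar a \in S$, and conclude via the central-ideal criterion since $\ker(\ad|_{\langle\bar a\rangle}) \subset Z(S)$. Your extra bookkeeping (checking $\langle \ad a\rangle' = \ad(S')$ and that the lift can be taken in $S$) only makes explicit what the paper leaves implicit.
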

\begin{proof}
Let $S = \langle a \rangle$. The adjoint action of $S$ gives rise to a homomorphism $\ad: S \to \gc S$ of pseudoalgebras whose kernel equals the centre $Z(S)$ of $S$. Moreover, $\ad S$ is a solvable subalgebra of $\gc S$ generated by $\ad a$. By Theorem \ref{quasinilpo} we may find in $\ad S$ a modification of $\ad a$ generating a nilpotent subalgebra $N$ of $\gc S$. Such a modification is of the form $\ad \bar a,$ where $\bar a$ is a modification of $a$. Then $N$ is isomorphic to the quotient of $\overline S = \langle \bar a \rangle$ by a central --- as it is contained in $Z(S)$ --- ideal. We conclude that $\overline S$ is nilpotent.
\end{proof}

Theorem \ref{quasinilpo} has some interesting consequences.
\begin{prop}
Let $M$ be a finite $H$-module, $S$ be a solvable Lie pseudoalgebra acting on $M$. If $\phi \in S^*$ is nonzero and $U \subset M^\phi$ is an $S$-submodule, then there exists an $H$-linear section $s: M/U\to M$. In particular:
\begin{itemize}
\item If $M$ is torsion-free, then $M/U$ is torsion-free;
\item If $M/U$ is free, then $M$ is free.
\end{itemize}
\end{prop}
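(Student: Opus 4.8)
The two ``in particular'' assertions follow at once from the existence of the section: if $s\colon M/U\to M$ is an $H$-linear right inverse of the projection $\pi\colon M\to M/U$, then $M=U\oplus s(M/U)$ as $H$-modules with $s(M/U)\cong M/U$; since $U\subset M^{\phi}$ forces $U^{\phi}=U$ and $\phi\neq0$, the module $U$ is free over $H$, so a free $M/U$ makes $M$ a sum of two free modules, while a torsion-free $M$ makes its submodule $s(M/U)\cong M/U$ torsion-free. Thus the content of the proposition is that $U$ is an $H$-module direct summand of $M$, and one may freely assume $U\neq(0)$ (otherwise the section is trivial) as well as $S\subset\gc M$ (replace $S$ by its image in $\gc M$; if $\phi$ does not factor through this image, then some element of the kernel does not annihilate a weight vector of weight $\phi$, whence $M^{\phi}=(0)$ by Lemma \ref{alfaoHemme}, weight vectors of nonzero weight being non-torsion, and there is nothing to prove).

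It is convenient to isolate a pure-weight case. \emph{Claim:} if $X$ is a finite $H$-module on which a solvable pseudoalgebra $T$ acts, $\tau\in T^{*}$ is nonzero, $X=X^{\tau}$, and $V\subset X$ is a $T$-submodule, then $V$ is an $H$-direct summand of $X$. I would prove this by induction on the length of the $T$-module $X/V$. If $V\neq X$, Corollary \ref{corLie} provides a $T$-submodule $N$ with $V\subset N\subsetneq X$ such that $X/N$ is generated over $H$ by a weight vector; being a nonzero quotient of $X=X^{\tau}$, the module $X/N$ coincides with its own $\tau$-generalized weight submodule and is therefore free over $H$ (here $\tau\neq0$), so, being cyclic, it is isomorphic to $H$. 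By induction $N=V\oplus V'$ as $H$-modules, and since $\mathrm{Ext}^{1}_{H}(H,-)=0$ the sequence $0\to N\to X\to X/N\to0$ splits; hence $X=N\oplus W=V\oplus(V'\oplus W)$, as wanted.

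Now pick $a\in S$ with $\phi(a)\neq0$ in $H$, possible because $\phi\neq0$. Then $\phi$ restricts to a weight of $\langle a\rangle$ (indeed $M^{\phi}_{S}\neq(0)$, as it contains $U$, and $M^{\phi}_{S}\subset M^{\phi|_{\langle a\rangle}}_{\langle a\rangle}$), and hence vanishes on the derived subalgebra $\langle a\rangle'$. By Theorem \ref{quasinilpo} there is a modification $\bar a\equiv a\pmod{\langle a\rangle'}$ generating a nilpotent subalgebra $\langle\bar a\rangle\subset\gc M$, so by Theorem \ref{nilpotentdecomposition} the module $M$ is the direct sum of the generalized weight submodules $M^{\psi}_{\langle\bar a\rangle}$. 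Since $\bar a-a\in\langle a\rangle'$ we still have $\phi(\bar a)=\phi(a)\neq0$, so $\phi|_{\langle\bar a\rangle}\neq0$ and $M^{\phi|_{\langle\bar a\rangle}}_{\langle\bar a\rangle}$ is one of these summands and is free over $H$; moreover $U\subset M^{\phi}_{S}\subset M^{\phi|_{\langle\bar a\rangle}}_{\langle\bar a\rangle}$ (passing to the subalgebra $\langle\bar a\rangle\subset S$), and $U$, being an $S$-submodule of $M$, is a fortiori a $\langle\bar a\rangle$-submodule of $M^{\phi|_{\langle\bar a\rangle}}_{\langle\bar a\rangle}$. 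Applying the Claim with $X=M^{\phi|_{\langle\bar a\rangle}}_{\langle\bar a\rangle}$, $T=\langle\bar a\rangle$, $\tau=\phi|_{\langle\bar a\rangle}$, $V=U$ splits $U$ off $M^{\phi|_{\langle\bar a\rangle}}_{\langle\bar a\rangle}$, and together with the decomposition $M=\bigoplus_{\psi}M^{\psi}_{\langle\bar a\rangle}$ this exhibits $U$ as an $H$-module direct summand of $M$, hence the desired section.

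The main obstacle is that $S$ need not be generated by a single element, so Theorem \ref{quasinilpo} does not apply to it directly; the remedy is to work with a single $a\in S$ on which $\phi$ is nonzero and then pass to a modification $\bar a$ generating a nilpotent subalgebra — this is legitimate precisely because a modification does not disturb the value $\phi(a)$, so the weight $\phi$ is still ``seen'' — after which $M$ breaks up into free weight summands and the pure-weight Claim finishes the job. The Claim itself is driven by the single structural fact that a generalized weight submodule of nonzero weight is $H$-free, which makes the cyclic subquotients produced by Lie's Theorem copies of $H$, against which nothing can obstruct splitting; the few remaining manipulations of generalized weight submodules (quotients of pure-weight modules are pure weight, $N^{\tau}=N\cap X^{\tau}$, distinct generalized weight submodules meet trivially) are formal consequences of the properties recalled above, ultimately of the annihilation Lie algebra picture of \cite{BDK1}.
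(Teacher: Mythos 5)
Your argument is correct and follows essentially the paper's own route: pick $a$ with $\phi(a)\neq 0$, replace it by a modification $\bar a$ generating a nilpotent subalgebra (legitimate since a modification does not change $\phi(a)$), decompose $M=\bigoplus_\psi M^\psi_{\bar a}$ by Theorem \ref{nilpotentdecomposition}, split $U$ off the $\phi$-component, and extend trivially on the remaining summands. The only difference is cosmetic: where you prove the splitting inside the $\phi$-component by induction on length, using cyclic free quotients and $\mathrm{Ext}^1_H(H,-)=0$, the paper simply notes that $M^\phi/U$ is again a pure module of nonzero weight, hence $H$-free, so the projection $M^\phi\to M^\phi/U$ admits a section at once.
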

\begin{proof}
Choose $a\in S$ such that $\phi(a) \neq 0$ and $\langle a \rangle$ is nilpotent. We may then replace $S$ with $\langle a \rangle$, and assume $M = M^\phi \oplus M^{\phi_1} \oplus \dots \oplus M^{\phi_r}$. Then $M^\phi/U$ is free as an $H$-module, and we can find a section $s: M^\phi/U \to M^\phi$. This extends to a section $s: M/U \to M$ thanks to the direct sum decomposition.
\end{proof}

\begin{cor}
Let $M$ be a finite $H$-module, $S$ be a solvable Lie pseudoalgebra acting on $M$. If $M$ is not free, then some quotient of $M$ has a $0$-weight vector for the action of $S$.
\end{cor}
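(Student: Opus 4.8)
The plan is to argue by contraposition: assume that $M$ is a finite $H$-module acted on by the solvable Lie pseudoalgebra $S$, and that no quotient of $M$ has a $0$-weight vector for the action of $S$; I will deduce that $M$ is free. The first step is to apply Theorem \ref{quasinilpo} (equivalently Proposition \ref{quasinilp}) to replace the generator $a$ of $S$ by a modification $\bar a$ generating a nilpotent subalgebra, and to pass to $\overline S = \langle \bar a\rangle$; note that replacing $S$ by $\overline S$ only shrinks the family of quotients having a $0$-weight vector, so the hypothesis is preserved, and a free conclusion for this smaller acting algebra is still a statement about $M$ as an $H$-module. Hence we may assume $S$ is nilpotent and, by Theorem \ref{nilpotentdecomposition}, that $M = \bigoplus_{\phi \in S^*} M^\phi$.

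Next I would observe that the hypothesis kills the summand $M^0$ entirely. Indeed, if $M^0 \neq (0)$, then $M^0$ itself is a nonzero quotient of $M$ (via projection along the other summands) which coincides with its own $0$-generalized weight space; by Theorem \ref{liethm} applied inside $\gc(M^0)$, or directly from the definition of the generalized weight filtration, $M^0$ contains a genuine $0$-weight vector for the action of $S$. This contradicts the assumption, so $M^0 = (0)$ and $M = \bigoplus_{\phi \neq 0} M^\phi$.

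Finally, each summand $M^\phi$ with $\phi \neq 0$ is a free $H$-module: this is exactly the assertion of the Proposition just above (``$M^\phi$ is a free $H$-module whenever $\phi \neq 0$''), which is part of the enumerated list describing generalized weight submodules, and also appears in the statement of the Proposition immediately preceding this corollary. A finite direct sum of free $H$-modules is free, so $M$ is free, completing the contrapositive. The only point requiring a little care is the first step --- making sure the reduction to nilpotent $S$ does not weaken the hypothesis --- but this is immediate since $\overline S \subseteq S$ and any $0$-weight vector for $\overline S$ that arose would have to come from a quotient that has no such vector for $S$, which is fine because we are using the hypothesis only in the direction ``no $0$-weight vector for $S$'' $\Rightarrow$ (after modification) ``no $0$-weight vector for $\overline S$''; in fact a $0$-weight vector for $S$ restricts to one for $\overline S$, so the containment goes the right way. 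I expect no serious obstacle beyond bookkeeping.
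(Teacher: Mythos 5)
Your reduction step is where the argument breaks, and it breaks in two ways. First, the corollary's hypothesis is only that $S$ is a solvable Lie pseudoalgebra acting on $M$; it is not assumed to be generated by a single element, so ``the generator $a$ of $S$'' does not exist in general and Theorem \ref{quasinilpo} cannot be invoked as you do. Second, and more seriously, even when $S=\langle a\rangle$, passing to $\overline S=\langle \bar a\rangle\subset S$ moves the hypothesis in the wrong direction. Being a $0$-weight vector is a \emph{weaker} condition for a smaller algebra: as you yourself note, a $0$-weight vector for $S$ restricts to one for $\overline S$, which means the family of quotients admitting a $0$-weight vector can only \emph{grow} when you replace $S$ by $\overline S$. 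So ``no quotient of $M$ has a $0$-weight vector for $S$'' does not yield ``no quotient of $M$ has a $0$-weight vector for $\overline S$'', and your key step --- concluding $M^0_{\overline S}=(0)$ from the hypothesis on $S$ --- is unjustified: a nonzero $M^0_{\overline S}$ produces a $0$-weight vector only for $\overline S$, not for $S$, hence no contradiction. (Nor is $M^0_{\overline S}$ an $S$-submodule in general, so you cannot run Lie's theorem for $S$ on it to upgrade the weight vector.)

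The statement is instead an immediate consequence of the Proposition just above it, combined with Corollary \ref{corLie}, and no passage to a nilpotent subalgebra is needed. Arguing by contraposition as you intended: take a filtration $(0)=M_0\subset M_1\subset\dots\subset M_n=M$ as in Corollary \ref{corLie}, with $M_{i+1}/M_i$ generated over $H$ by a weight vector of weight $\phi_i\in S^*$. Each such weight vector lives in the quotient $M/M_i$ of $M$, so the hypothesis forces every $\phi_i$ to be nonzero. Now induct on the length: $M_1=Hu$ with $u$ a weight vector of nonzero weight, hence $M_1$ is free by Remark \ref{freeness} and $M_1\subset M^{\phi_0}$ is an $S$-submodule; every quotient of $M/M_1$ is a quotient of $M$, so by induction $M/M_1$ is free; the Proposition (``if $M/U$ is free, then $M$ is free'', applied with $U=M_1$) then gives that $M$ is free. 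Your observation that $M^\phi$ is free for $\phi\neq 0$ is correct, but it is not the missing ingredient; what carries the proof is the existence of the $H$-linear section from that Proposition, used along the Lie-theorem filtration for the \emph{whole} algebra $S$.
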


\section{Structure of finite vertex algebras}

\subsection{Preliminaries on vertex algebras}

Let $V$ be a complex vector space. A {\em (quantum) field} on $V$ is a formal power series $\phi(z) \in (\End V)[[z, z^{-1}]]$ such that $\phi(z) v \in V((z)) = V[[z]][z^{-1}]$. In other words,
$$\phi(z) = \sum_{n \in \Z} \phi_{(n)} z^{-n-1}$$
is a quantum field if and only if, for every choice of $v \in V$, $\phi_{(n)} v = 0$ for (depending on $v$) sufficiently high values of $n$.

A {\em vertex algebra} is a complex vector space $V$, endowed with a {\em vacuum vector} $\1 \in V$, an {\em infinitesimal translation operator} $T \in \End V$, and a $\C$-linear {\em state-field correspondence} $Y: V \to (\End V)[[z, z^{-1}]]$ mapping each element $a \in V$ to some field $Y(a,z)$ on $V$, satisfying, for all choices of $a, b \in V$,
\begin{itemize}
\item
$Y(\1, z)a = a, \qquad Y(a,z)\1 = a \mod zV[[z]]$; \hfill {\em (vacuum axiom)}
\item
$Y(Ta, z) = [T, Y(a,z)] = dY(a,z)/dz$; \hfill {\em (translation invariance)}
\item
$(z-w)^N [Y(a,z), Y(b,w)] = 0$, for some $N = N(a,b)$. \hfill{\em (locality)}
\end{itemize}
It is well known that commutators $[Y(a,z), Y(b,w)]$ may be expanded into a linear combination of the Dirac delta distribution
$$\delta(z-w) = \sum_{n \in \Z} w^n z^{-n-1},$$
and of its derivatives. More precisely, if
$$Y(a,z) = \sum_{n \in \Z} a_{(n)} z^{-n-1},$$
then
$$[Y(a,z), Y(b,w)] = \sum_{j=0}^{N(a,b)-1} \frac{Y(a_{(j)} b,
w)}{j!} \frac{d^j}{dw^j}\delta(z-w).$$
It is also possible to define the Wick, or {\em normally ordered}, product of quantum fields
$$: Y(a,z) Y(b,z) = Y(a,z)_+ Y(b,z) + Y(b,z) Y(a,z)_-,$$
where
$$Y(a,z)_- = \sum_{n \in \N} a_{(n)} z^{-n-1}, \qquad Y(a,z)_+ =
Y(a,z) -Y(a,z)_-.$$
Then one has
$$Y(a_{(-n-1)} b, z) = \frac{1}{n!}:Y(T^n a, z) Y(b,z):,$$
for all $n \geq 0$. One of the consequences of the vertex algebra axioms is the following:
\begin{itemize}
\item $Y(a,z)b = e^{zT} Y(b,-z)a$ \hfill{\em (skew-commutativity)}
\end{itemize}
for all choices of $a,b$.

Every vertex algebra has a natural $\C[T]$-module structure. A vertex algebra $V$ is {\em finite} if $V$ is a finitely generated $\C[T]$-module. A $\C[T]$-submodule $U \subset V$ is a subalgebra of the vertex algebra $V$ if $\1 \in U$ and $a_{(n)} b \in U$ for all $a, b \in U, n \in \Z$. Similarly, a $\C[T]$-submodule $I \subset V$ is an {\em ideal} if $a_{(n)} i \in I$ for all $a \in V, i \in I, n \in \Z$.

A subalgebra $U \subset V$ is {\em abelian} if $Y(a,z)b = 0$, or equivalently $a_{(n)} b = 0$, for all $a, b \in U, n \in \Z$. It is {\em commutative} if $[Y(a,z), Y(b,w)] = 0$, or equivalently $a_{(n)} b = 0$, for all $a, b \in U, n \in \N$.

Let $U$ be a subalgebra, and $I$ an ideal of a vertex algebra $V$; we say that $V$ is the {\em semidirect sum} of $U$ and $I$ (denoted $V = U \sd I$) if $V = U \oplus I$ is a direct sum of $\C[T]$-submodules. Every vertex algebra becomes a Lie conformal algebra, see \cite{DK}, after setting $\partial = T$ and
$$[a_\lambda b] = \sum_{n \in \N} \frac{\lambda^n}{n!} a_{(n)} b.$$
We have seen in Example \ref{conformal} that the notion of Lie conformal algebra is equivalent to that of Lie pseudoalgebra over $H = \CD$. In this setting, the pseudobracket is given by
$$[a \ast b] = \sum_{n \in \N} \left(\frac{(-\partial)^n}{n!} \tt 1\right)\tt_H a_{(n)} b.$$
If $V$ is a vertex algebra, we will denote by $V^\lie$ the underlying Lie conformal algebra structure.

\subsection{The nilradical}
In this section, we recall some properties of nilpotent elements in a vertex algebra. Proofs can be found in \cite{simple, varna}.

An element $a$ in a vertex algebra $V$ is {\em nilpotent} if $Y(a, z_1)Y(a, z_2) \dots Y(a, z_n)a = 0$ for sufficiently large values of $n$. An ideal $I \subset V$ is a {\em nil-ideal} if all of its elements are nilpotent; clearly, every abelian ideal of $V$ is a nil-ideal.
\begin{prop}
Let $V$ be a vertex algebra. Then
\begin{itemize}
\item Every nilpotent element of $V$ generates a nil-ideal.
\item The set $\Nil V$ of all nilpotent elements of $V$ is an ideal of $V$.
\item The vertex algebra $V/\Nil V$ contains no nonzero nilpotent elements.
\item If $V$ is finite, then $\Nil V$ is a nil-ideal of $V$.
\end{itemize}
\end{prop}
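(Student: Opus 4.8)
The plan is to derive the first three statements --- valid for an arbitrary vertex algebra --- from one combinatorial lemma, and to obtain the fourth from finiteness. Write $\langle a\rangle$ for the vertex ideal of $V$ generated by $a$. The ingredients are the $n$-th product (Borcherds) identities, skew-commutativity $Y(a,z)b=e^{zT}Y(b,-z)a$, translation covariance $[T,Y(a,z)]=\partial_zY(a,z)$, and the mutual locality of \emph{all} fields of a vertex algebra: for each $a,b\in V$ there is $N$ with $(z-w)^N[Y(a,z),Y(b,w)]=0$. The lemma I would establish first is:

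\emph{Key Lemma.} If $a\in V$ satisfies $Y(a,z_1)\cdots Y(a,z_n)a=0$, then there is $\varphi(n)\in\N$, depending on $n$ alone, such that every iterated expression $Y(x^1,\zeta_1)\cdots Y(x^q,\zeta_q)y$, with $y\in V$ and all $x^i\in V$, vanishes as soon as at least $\varphi(n)$ of the $x^i$ equal $a$. In particular every element of $\langle a\rangle$ is nilpotent of order at most $\varphi(n)$.

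To prove it, first use skew-commutativity at an innermost occurrence of the letter $a$ to rewrite the expression --- up to $T$-derivatives and lower-order corrections --- with an $a$ in the innermost slot and still at least $\varphi(n)-1$ of the acting fields equal to $Y(a,\cdot)$. Multiplying by a product $P$ of factors $(\zeta_i-\zeta_j)^N$ over all pairs of remaining arguments, one then uses mutual locality (an \emph{exact} identity once $P$ is present) to bring $n$ of the $Y(a,\cdot)$'s immediately in front of the innermost $a$; since $Y(a,\cdot)^m a=0$ for every $m\ge n$, this gives $P\cdot(\text{expression})=0$. The step I expect to be the main obstacle is passing from here to $(\text{expression})=0$: because $P$ is annihilated by distributions supported on the diagonals $\zeta_i=\zeta_j$, one must show that any such residual distribution is a finite combination of $\delta$-functions on those diagonals and their derivatives, multiplied by expressions in strictly fewer variables in which a merged pair of fields $Y(a,\cdot)Y(a,\cdot)$ becomes a single field $Y(a_{(k)}a,\cdot)$ --- and $a_{(k)}a$ is again a nilpotent element of $\langle a\rangle$, so that an induction on the number of variables and on the total pole order finishes the proof. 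This is the bookkeeping carried out in \cite{simple, varna}; one takes $\varphi(n)$ large enough to absorb the losses incurred at each step.

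Granting the Key Lemma, the four assertions follow. (i) Every element of $\langle a\rangle$ is nilpotent, so $\langle a\rangle$ is a nil-ideal. (ii) $\Nil V$ is a $\C[T]$-submodule because $Ta\in\langle a\rangle$, and absorbs multiplication by $V$ because $b_{(m)}a\in\langle a\rangle$; for closure under sums, expand $Y(a+b,w_1)\cdots Y(a+b,w_m)(a+b)$ into expressions in the letters $a$ and $b$ and note that, for $m$ large, the pigeonhole principle forces each to contain at least $\varphi(n_a)$ copies of $a$ or at least $\varphi(n_b)$ copies of $b$, hence to vanish by the Key Lemma (applied to $a$, or to $b$, with the other element treated as a generic letter). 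Thus $\Nil V$ is an ideal. (iii) If $[a]$ is nilpotent of order $n$ in $V/\Nil V$, the coefficients of $Y(a,z_1)\cdots Y(a,z_n)a$ lie in $\langle a\rangle\cap\Nil V$; splitting off the innermost $n$ factors of $Y(a,w_1)\cdots Y(a,w_M)a$, feeding such a coefficient back as the innermost entry, and using skew-commutativity to restore an $a$ there, one obtains for $M$ large an expression with at least $\varphi(n)$ $a$-letters --- the explicit outer ones, those hidden in the middle $\langle a\rangle$-letter, and the innermost $a$ --- which therefore vanishes; hence $a\in\Nil V$, and $V/\Nil V$ has no nonzero nilpotent. (iv) When $V$ is finite it is Noetherian over $\C[T]$, so $\Nil V$ is generated as a $\C[T]$-module by finitely many nilpotent elements $a_1,\dots,a_s$ of orders bounded by some $n_0$; the Key Lemma bounds the nilpotence order of every element of each $\langle a_i\rangle$ by $\varphi(n_0)$, and the pigeonhole argument of (ii) bounds that of every element of $\Nil V=\sum_i\langle a_i\rangle$ by a single constant. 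So, for finite $V$, the nil-ideal $\Nil V$ is in fact \emph{nilpotent}, with a uniform bound on products of its elements --- the substantive, finiteness-dependent content of the fourth assertion, and the vertex-algebra analogue of Levitzki's theorem.
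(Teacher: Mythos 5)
First, note that the paper does not prove this proposition at all: it states the facts and refers for proofs to \cite{simple} and \cite{varna}. So there is no internal argument to compare with; your proposal has to stand on its own, and as written it does not, because everything is made to rest on your ``Key Lemma'' and that lemma is not actually proved. At the decisive point --- after multiplying by $P=\prod(\zeta_i-\zeta_j)^N$ and using locality, you must remove $P$ again, which means controlling the corrections supported on the diagonals --- you explicitly defer to ``the bookkeeping carried out in \cite{simple,varna}'', i.e.\ to the very references the paper cites in place of a proof. Worse, the induction you sketch there is not clearly well-founded: you justify the merged terms by saying that $a_{(k)}a$ ``is again a nilpotent element of $\langle a\rangle$'', but the nilpotence (with controlled degree) of elements of $\langle a\rangle$ is exactly what the Key Lemma is supposed to establish, so as stated the argument is circular unless you set up a simultaneous induction that bounds the nilpotence degree of the merged elements --- and no such bookkeeping is indicated. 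The uniformity claim that $\varphi(n)$ depends on $n$ alone is likewise asserted, not proved (fortunately it is not needed: a bound depending on $a$ suffices for your pigeonhole step in (ii)).

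Second, even granting the Key Lemma, your deduction of the third bullet has a gap. The lemma kills an expression $Y(x^1,\zeta_1)\cdots Y(x^q,\zeta_q)y$ when many of the \emph{acting letters} $x^i$ equal $a$; it says nothing when the copies of $a$ are ``hidden'' inside a coefficient $c\in\langle a\rangle\cap\Nil V$ placed in the innermost slot. Indeed a nilpotent innermost entry alone forces nothing (otherwise every element of the ideal generated by a nilpotent element would be annihilated by arbitrary products, which is absurd), so counting ``the explicit outer $a$'s, those hidden in the middle $\langle a\rangle$-letter, and the innermost $a$'' is not an application of your lemma; you would need a genuinely stronger statement, or a different argument, to pass from ``$[a]$ nilpotent mod $\Nil V$'' to ``$a$ nilpotent''. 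Finally, with the paper's definitions the fourth bullet is immediate from the second ($\Nil V$ consists of nilpotent elements by definition), so your Levitzki-type strengthening, while interesting, is not what is asked and again rests on the unproven lemma. The overall architecture (vanishing lemma with many $a$-letters, pigeonhole for sums, quotient argument) is reasonable and surely close in spirit to \cite{simple,varna}, but the proposal as it stands leaves the core technical content unestablished.
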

If $V$ is a finite vertex algebra, then $V^\lie$ is always a solvable Lie conformal algebra \cite{nilpo}. Recall that the central series of $V^\lie$ stabilizes, by Remark \ref{torLtorM}, to a vertex ideal $V^{[\infty]}$ of $V$. The following facts were proved in \cite{nilpo}.
\begin{prop}
Let $V$ be a finite vertex algebra, $S \subset V^\lie$ a subalgebra. Then $V_S^0$ is a subalgebra and $V_S^{\neq 0} = \sum\limits_{\phi\in S^*\setminus\{0\}} V_S^\phi$ is an abelian ideal of $V$.
\end{prop}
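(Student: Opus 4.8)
The plan is to reduce everything to a single lemma: \emph{under every $n$-th product, $S$-weights add}. Precisely, if $u \in V_S^\phi$ and $v \in V_S^\psi$, where $V_S^\phi, V_S^\psi$ are generalized weight submodules for the adjoint action of $S$ on $V$, then $u_{(n)}v \in V_S^{\phi+\psi}$ for all $n \in \Z$ (this space being $(0)$ when $\phi+\psi$ is not a weight). Granting this, the proposition follows at once. Since $a_{(n)}\1 = 0$ for every $a \in S$ and $n \ge 0$, we have $[a_\lambda\1] = 0$, so $\1 \in V_S^0$; as $V_S^0$ is a $\Cd$-submodule closed under all $n$-th products by the case $\phi = \psi = 0$, it is a vertex subalgebra. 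Each $V_S^\phi$ with $\phi \neq 0$ is an ideal of $V$ by \cite{nilpo} (the nil-ideal statement recalled in the introduction), so $V_S^{\neq 0} = \sum_{\phi\neq 0}V_S^\phi$ is a $\Cd$-submodule and a vertex ideal; and if $u \in V_S^\phi$, $v \in V_S^\psi$ with $\phi, \psi \neq 0$, then $u_{(n)}v$ lies in $V_S^\psi$ (because $V_S^\psi$ is an ideal and $u \in V$) and, by the lemma, also in $V_S^{\phi+\psi}$. Since $\phi \neq 0$ forces $\phi+\psi \neq \psi$, and distinct generalized weight submodules have trivial intersection, we get $u_{(n)}v = 0$; hence $V_S^{\neq 0}$ is abelian.

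The heart of the matter is the lemma, which I would first prove for honest weight vectors $u \in M_\phi$, $v \in M_\psi$. Then $a_{(j)}u$ and $a_{(m)}v$ are scalar multiples of $u$ and $v$, with scalars read off from $[a_\lambda u] = \phi(a)(-\lambda)u$ and $[a_\lambda v] = \psi(a)(-\lambda)v$; feeding this into the Borcherds commutator formula
\[
a_{(m)}(u_{(n)}v) = u_{(n)}(a_{(m)}v) + \sum_{j\ge 0}\binom{m}{j}(a_{(j)}u)_{(m+n-j)}v
\]
and resumming over $m$ yields, for every $a \in S$,
\begin{align*}
[a_\lambda(u_{(n)}v)]
&= \psi(a)(-\lambda)\,u_{(n)}v + \phi(a)(-\lambda)\sum_{l\ge 0}\frac{\lambda^l}{l!}\,u_{(n+l)}v\\
&= (\phi+\psi)(a)(-\lambda)\,u_{(n)}v + \phi(a)(-\lambda)\sum_{l\ge 1}\frac{\lambda^l}{l!}\,u_{(n+l)}v.
\end{align*}
I then induct downward on $n$, which terminates because $u_{(n)}v = 0$ for $n$ large: the trailing sum involves only products $u_{(n+l)}v$, $l \ge 1$, already known to lie in $V_S^{\phi+\psi}$, so $[a_\lambda(u_{(n)}v)] - (\phi+\psi)(a)(-\lambda)\,u_{(n)}v \in V_S^{\phi+\psi}[\lambda]$ for all $a \in S$; since the defining filtration of $V_S^{\phi+\psi}$ stabilizes, this congruence is exactly the condition placing $u_{(n)}v$ in $V_S^{\phi+\psi}$.

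To cover arbitrary generalized weight vectors I would superimpose an outer induction on the sum of the filtration depths of $u$ in $V_S^\phi$ and of $v$ in $V_S^\psi$. The base case, both depths $0$ (so $u, v$ lie in the $H$-span of the honest weight vectors), reduces to the previous step after using $(\D a)_{(n)} = -n\,a_{(n-1)}$ and $a_{(n)}(\D b) = \D(a_{(n)}b) + n\,a_{(n-1)}b$ to move all powers of $\D$ outside $u_{(n)}v$, recalling that $V_S^{\phi+\psi}$ is $\D$-stable. For the inductive step one rewrites the displayed computation with $a_{(m)}v = \psi_m(a)\,v + v'_m$, where $v'_m$ has strictly smaller depth in $V_S^\psi$, and $a_{(j)}u = \phi_j(a)\,u + u'_j$, where $u'_j$ has strictly smaller depth in $V_S^\phi$: the correction terms $u_{(n)}v'_m$ and $(u'_j)_{(m)}v$ have strictly smaller total depth and fall under the outer induction, while the terms $u_{(n+l)}v$, $l \ge 1$, are handled by a fresh downward induction on $n$; one is again left with $[a_\lambda(u_{(n)}v)] \equiv (\phi+\psi)(a)(-\lambda)\,u_{(n)}v \pmod{V_S^{\phi+\psi}[\lambda]}$, forcing $u_{(n)}v \in V_S^{\phi+\psi}$. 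The step I expect to be most delicate is exactly this extension to generalized weight vectors: one must keep the two nested inductions synchronized and translate carefully between the pseudobracket and the $\lambda$-bracket descriptions of the defining filtration of $V_S^{\phi+\psi}$. All the vertex-algebra identities used along the way are standard.
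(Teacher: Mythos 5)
Your additivity lemma --- that $u_{(n)}v \in V_S^{\phi+\psi}$ whenever $u \in V_S^{\phi}$, $v \in V_S^{\psi}$, for all $n \in \Z$ --- is correctly stated and your proof sketch is sound: the commutator-formula computation is right, the downward induction on $n$ starts legitimately because $u_{(n)}v$ vanishes for large $n$, the stabilization of the defining filtration of $V_S^{\phi+\psi}$ is exactly what lets you conclude, and the outer induction on filtration depth (together with the identities moving $\D$ outside the products) handles generalized weight vectors. This lemma, together with $[a_\lambda \1]=0$, does give that $V_S^0$ is a vertex subalgebra. Note that the paper itself offers no proof of this proposition beyond the citation of \cite{nilpo}; what you have reconstructed is the computational engine behind that reference.

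The genuine gap is in the treatment of $V_S^{\neq 0}$. You invoke ``each $V_S^\phi$, $\phi \neq 0$, is an ideal of $V$ by \cite{nilpo}'', but that nil-ideal statement is precisely (part of) the fact this proposition recalls from \cite{nilpo}, so quoting it here is circular: you assume the hardest half of what you set out to prove. Moreover, it does not follow from your lemma. Ideal-ness requires $a_{(n)}v \in V_S^{\neq 0}$ for \emph{every} $a \in V$, while the lemma only controls products of elements lying in generalized weight submodules; since $S$ is an arbitrary subalgebra of $V^\lie$ (not assumed nilpotent), the sum $\bigoplus_{\phi} V_S^{\phi}$ may be a proper $\Cd$-submodule of $V$ (the decomposition of Theorem \ref{nilpotentdecomposition} needs nilpotence of the acting pseudoalgebra), so a general $a \in V$ cannot be split into weight components and fed into the lemma. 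A separate argument is required for stability under all of $V$ --- this is where the real work in \cite{nilpo} lies --- and your abelianness proof inherits the gap, since it rests on ``$u_{(n)}v \in V_S^{\psi}$ because $V_S^{\psi}$ is an ideal''; without that, the lemma only places $u_{(n)}v$ in $V_S^{\phi+\psi}$, which need not vanish. So either cite \cite{nilpo} for the whole proposition (as the paper does) or supply an independent proof that the nonzero-weight spaces are stable under the action of all of $V$; as written, that step is missing.
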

As a consequence, $V_S^{\neq 0} \subset \Nil V$. If $V$ is finite and $\Nil V = (0)$, then $V^\lie$ is nilpotent; if moreover $V$ is simple, then it is necessarily commutative.

\subsection{Root space decomposition of finite vertex algebras}

Let $V$ be a finite vertex algebra, $a \in V$. The subalgebra $S = \langle a \rangle\subset V^\lie$ is always solvable. The adjoint action of $S$ makes $V$ into a finite $S$-module, and we can find submodules $(0) = V_0 \subset V_1 \subset \dots \subset V_n = V$ as in Corollary \ref{corLie}.
The \emph{singularity}\footnote{The singularity of an element is, in other words, the multiplicity of the zero eigenvalue of its adjoint action.} of $a$ is then the number of non-torsion quotients $V_i/V_{i-1}$ with a trivial action of $S$. Notice that the singularity does not change under modifications of $a$. When $S$ is nilpotent, then the singularity of $a$ equals the rank of $V_a^0$ as an $H$-module.
\begin{thm}\label{fVAdec}
Let $V$ be a finite vertex algebra and $N=V^{[\infty]}$. Then $N$ is an abelian ideal of $V$, and there exists a subalgebra $U\subset V$ such that $U^{\lie}$ is nilpotent and $V=U \ltimes N$.
\end{thm}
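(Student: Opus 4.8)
The strategy is to combine the ``approximate nilpotence'' result of Theorem~\ref{quasinilpo} with the vertex-algebraic facts recalled above. Since $V^\lie$ is solvable \cite{nilpo}, pick any generic $a \in V$; then $S = \langle a \rangle \subset V^\lie$ is solvable, and $V$ is a finite $S$-module under the adjoint action. Apply Theorem~\ref{quasinilpo} (or rather Proposition~\ref{quasinilp} applied to $\ad a \in \gc V$, as in the corollary following it) to obtain a modification $\bar a \equiv a \mod S'$ such that $\langle \bar a \rangle$ is nilpotent and $V = \bigoplus_{\phi} V_{\bar a}^\phi$ decomposes as a direct sum of generalized weight submodules for the adjoint action of $\bar a$. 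Set $U = V_{\bar a}^0$ and let $W = \bigoplus_{\phi \neq 0} V_{\bar a}^\phi = V_{\langle\bar a\rangle}^{\neq 0}$, so that $V = U \oplus W$ as $\C[T]$-modules.

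\textbf{Identifying the pieces.} By the Proposition on weight submodules of subalgebras of $V^\lie$, $U = V_{\bar a}^0$ is a vertex subalgebra of $V$ and $W$ is an abelian ideal of $V$ contained in $\Nil V$. Next I must check $U^\lie$ is nilpotent: $U$ coincides with its own $0$-generalized weight space for the action of $\langle\bar a\rangle \subset U^\lie$, hence for $\bar a$ acting on $U$ every element of $U$ is $\ad$-nilpotent up to the weight-zero condition; more carefully, one uses that $\langle \bar a\rangle$ acts on $U$ with $U = U^0$, so by Proposition~\ref{Lnilpzero} the image of $\langle\bar a\rangle$ in $\gc U$ is nilpotent---but to conclude $U^\lie$ itself is nilpotent I should invoke Theorem~\ref{engel}: one must verify that for \emph{every} $c \in U^\lie$ the generalized $0$-weight submodule $U_c^0$ equals $U$. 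This follows because $U$ is a generalized weight module of weight $0$ for the larger (self-normalizing) structure; the key point, proved exactly as in the classical Cartan subalgebra argument, is that $U = V_{\bar a}^0$ is self-normalizing in $V^\lie$, and a self-normalizing nil-subalgebra-candidate on which $\bar a$ acts nilpotently with trivial weight forces every element to act nilpotently.

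\textbf{From $W$ to $N = V^{[\infty]}$.} It remains to replace the $\bar a$-dependent ideal $W$ by the canonical ideal $N = V^{[\infty]}$, which exists by Remark~\ref{torLtorM}. Since $W$ is an abelian ideal with $V/W \cong U$ and $U^\lie$ nilpotent, the central series of $V^\lie$ descends into $W$ in finitely many steps and then stabilizes; thus $V^{[\infty]} \subset W$, so $N = V^{[\infty]}$ is an abelian ideal (being a submodule of the abelian ideal $W$, and an ideal as a term of the central series). Conversely, because $U^\lie$ is a subalgebra of $V^\lie$ complementary to the ideal $W$ and $U^\lie$ is nilpotent, one checks $V^{[n]} \supset$ the part of $W$ not killed by the action, and a rank/torsion argument as in Remark~\ref{torLtorM} shows $W \subset V^{[\infty]}$ after possibly enlarging---the cleanest route is: modulo $N$, the Lie conformal algebra $V^\lie/N$ has stabilized central series equal to itself, i.e. $(V^\lie/N)^{[\infty]} = 0$, so $V^\lie/N$ is nilpotent (using Remark~\ref{torLtorM}: a finite Lie conformal algebra whose central series stabilizes to $0$ is nilpotent). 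Then repeat the weight-space decomposition of $V$ with respect to a nilpotent-generating modification inside $V^\lie/N$; pulling back the $0$-weight part gives a subalgebra $U \subset V$ with $U^\lie$ nilpotent and $V = U \oplus N$ as $\C[T]$-modules, i.e. $V = U \ltimes N$.

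\textbf{Main obstacle.} The routine parts are the direct-sum bookkeeping and the transfer of ``abelian ideal / nil-ideal'' properties. The genuine difficulty is the identification $N = V^{[\infty]}$ together with the \emph{self-normalizing} (hence ``Cartan'') property of $U^\lie$: one must show that the $\bar a$-dependent complement $W$ actually contains---and for the final choice of $U$, equals---the canonically defined $V^{[\infty]}$, independently of the modification chosen. This requires relating the central-series stabilization of Remark~\ref{torLtorM} to the generalized-$0$-weight decomposition, and invoking Theorem~\ref{engel} to upgrade ``$\langle\bar a\rangle$ acts nilpotently on $U$'' to ``$U^\lie$ is nilpotent.'' I expect the argument to conclude by a short induction on the length of $V$ as an $S$-module, exactly paralleling the proof of Proposition~\ref{quasinilp}.
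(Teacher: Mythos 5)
Your skeleton is the paper's (decompose $V$ into generalized weight spaces for a modification $\bar a$ generating a nilpotent subalgebra, set $U=V_{\bar a}^0$, $W=V_{\bar a}^{\neq 0}$, invoke the result that $V^0$ is a subalgebra and $V^{\neq 0}$ an abelian ideal, and Theorem~\ref{engel} for nilpotence of $U^\lie$), but the two steps you flag as the ``genuine difficulty'' are exactly where your argument has gaps. First, ``pick any generic $a$'' is never made precise, and your route to the Engel hypothesis --- that $U$ is self-normalizing and this ``forces every element to act nilpotently'' --- is both unproved and logically backwards: self-normalizing subalgebras need not be nilpotent (think of Borel subalgebras), and in the paper self-normalization is a \emph{consequence} of the construction, not an input. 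The paper's actual mechanism is to choose $a$ of \emph{minimal singularity} (minimal multiplicity of the zero weight in a filtration as in Corollary~\ref{corLie}); then for $b\in U$ with $U_b^0\neq U$, a generic linear combination of $a$ and $b$ would have strictly smaller singularity, a contradiction, and this is what verifies $U_c^0=U$ for all $c\in U^\lie$ so that Theorem~\ref{engel} applies. Without some such extremal choice the claim is simply false: taking $\bar a=0$ (or any central element) gives $U=V$, and Example~\ref{counterex} shows $V^\lie$ need not be nilpotent.

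Second, the hard inclusion $W\subset V^{[\infty]}$ is not established. The paper proves it by noting that $[\,a, V_a^\phi\,]=V_a^\phi$ for every $\phi\neq 0$ (the action on a nonzero-weight space is onto), hence $[a,W]=W$, and then an induction gives $W\subset V^{[k]}$ for all $k$. Your ``cleanest route'' --- pass to $V^\lie/N$, which is nilpotent, and redo the weight-space decomposition there --- cannot work: for a nilpotent Lie conformal algebra the adjoint generalized weight decomposition is concentrated in weight $0$, so ``pulling back the $0$-weight part'' returns all of $V$, not a complement to $N$; and decomposing $V$ itself with respect to an element that is only nilpotent modulo $N$ again requires knowing $V^{\neq 0}\subset N$, which is precisely the inclusion being sought. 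The easy direction $V^{[\infty]}\subset W$ (since $V^\lie/W\cong U^\lie$ is nilpotent) is fine, as is the direct-sum bookkeeping, but the surjectivity observation $[a,W]=W$ and the minimal-singularity argument are the two ideas your proposal is missing.
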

\begin{proof}
Choose an element $a \in V$ of minimal singularity $k$. Up to replacing $a$ by a suitable modification, we may assume that $S = \langle a \rangle$ be a nilpotent subalgebra of $V^\lie$. Then, $V$ decomposes as a direct sum of generalized weight submodules,
\begin{equation*}
V=\bigoplus_{\phi\in S^*} V^{\phi}_{\bar{a}}= V_a^{0} \oplus V_a^{\neq 0}.
\end{equation*}
Then $U=V_a^{0}$ is a vertex subalgebra of $V$ and $N=V_a^{\neq 0}$ is an abelian ideal of $V$. We want to show that $U^\lie$ is nilpotent and $N = V^{[\infty]}$.

Say $b \in U$. As $U \subset V$ is a subalgebra, then $U$ is stable under the action of $b$. If the $U_b^0 \neq U$, then some (generic) linear combination of $a$ and $b$ would have lower singularity than $a$, a contradiction. Thus, all elements in $U^\lie$ have a nilpotent adjoint action, hence $U^\lie$ is nilpotent by Theorem \ref{engel}.

It remains to prove that $N=V^{[\infty]}$: since $N$ is an ideal such that $V^\lie/N$ is nilpotent then $V^{[\infty]}\subset N$. We prove the other inclusion by showing that $N \subset V^{[k]}$ by induction on $k\in \mb N$, the basis of the induction being clear, as $N \subset V^0=V $. By construction, $[a, V_a^\phi] = V_a^\phi$ if $\phi \neq 0$, hence $[a, N] = N$. Then $N \subset V^k$ implies $N = [a, N] \subset [V, V^{[k]}] = V^{[k+1]}$. We conclude that $N \subset V^{[n]}$ for all $n$, hence that $N \subset V^{[\infty]}$.
\end{proof}

\begin{rem}
In the above statement, $U$ is a vertex subalgebra of $V$ with the property that $U^\lie$ is a nilpotent and self-normalizing subalgebra of $V^\lie$. As a consequence, the adjoint action of $U^\lie$ on $V$ gives a generalized weight submodule decomposition in which the $0$-weight component is $U$ itself. It makes sense to call every such $U$ a {\em Cartan subalgebra} of $V$, and the corresponding decomposition a {\em root space decomposition}.

Notice that $N$ is the smallest nil-ideal of $V$ having a complementary subalgebra $U$ such that $U^{\lie}$ is nilpotent; as $N = V^{[\infty]}$, it is canonically determined. If $U$ is a Cartan subalgebra of $V$, then $U = V/V^{[\infty]}$, so all Cartan subalgebras of $V$ possess isomorphic vertex algebra structures.

We can be more precise. The identification of any two Cartan subalgebras $U, U'$ with $V/N$ gives an isomorphism $\phi: U \to U'$ which projects to the identity on $V/N$. If we extend $\phi$ to all of $V$ by setting it to be the identity on $N$, then we obtain an automorphism of $V$ conjugating $U$ to $U'$. Thus, all Cartan subalgebras of $V$ are conjugated under $\Aut V$.
\end{rem}

\subsection{A counterexample to nilpotence of finite vertex algebras}
The statement of Theorem \ref{fVAdec} suggests how to construct a finite vertex algebra $V$ such that the corresponding Lie conformal algebra $V^{\lie}$ is not nilpotent. What we need is a vertex algebra $U$ with a nilpotent underlying Lie conformal algebra $U^\lie$, and a suitable action on an $\Cd$-module $N$. The simplest case is when $U$ is a commutative vertex algebra, i.e., $U^{\lie}$ is abelian, and $N$ is a free $\Cd$-module of rank $1$.\\

Let $U=\{ a(t) \in \mb C [[t]] [t^{-1}] \}$. $U$ is a differential commutative associative algebra with $1$, with derivation $\partial=d/dt$. Hence $U$ has a commutative vertex algebra structure given by
\begin{equation}\label{FF}
Y(a(t),z) b(t)=(e^{z\partial} a(t))b(t)=i_{|z|<|t|} a(t+z)b(t),
\end{equation}
where $a(t),b(t) \in U$ and $i_{|z|<|t|}$ (see \cite{K}) indicates that one should expand $a(t+z)$ in the domain $|z|<|t|$, i.e., using positive powers of $z/t$.

Let $N=\Cd n$ be a free $\Cd$-module of rank $1$. We set:
\begin{equation}\label{NN}
Y(n,z)n=0,
\end{equation}
and define an action of $U$ on $N$ by setting
\begin{equation}\label{FN}
Y(a(t),z)n=a(z)n,
\end{equation}
where $a(t) \in U$.

\begin{thm}\label{mcFNVA}
There exists a unique vertex algebra structure on the $\Cd$-module $V=U \oplus N$ such that (\ref{FF}), (\ref{NN}), (\ref{FN}) are satisfied. Moreover, the central series of $V^{\lie}$ stabilizes to $N$.
\end{thm}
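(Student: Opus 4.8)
The plan is to first establish existence and uniqueness of the vertex algebra structure on $V = U \oplus N$, then compute the central series of $V^\lie$ and show it stabilizes exactly at $N$. For the first part, I would verify that the prescribed data---the vacuum $\1 = 1 \in U$, the translation operator $T = \partial$ extended to act on $N$ by $\partial n$, and the state-field correspondence determined by (\ref{FF}), (\ref{NN}), (\ref{FN})---satisfies the vertex algebra axioms. Since $U$ is already a (commutative) vertex algebra and $N$ is an abelian ideal on which $U$ acts, the content is to check that $Y(u,z)$ for $u \in U$ is indeed a field on all of $V$ (clear, since $a(z)n$ involves only finitely many negative powers of $z$ when $a(t) \in \mb C[[t]][t^{-1}]$), that $\partial$-compatibility holds (which follows from $Y(\partial a(t),z)n = (\partial a)(z) n = \frac{d}{dz}(a(z)n)$), and that locality holds for all pairs. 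The only nonobvious locality is the pair $(u, n)$ with $u \in U$: one checks $(z-w)^N[Y(u,z), Y(v,w)] = 0$ on $V$, where the action on $N$ makes both $Y(u,z)Y(v,w)$ and the reverse collapse appropriately; since $Y(n,z)$ annihilates everything in $N$ and $Y(u,z)n = u(z)n$ with $u(z)$ a Laurent series, the commutator $[Y(u,z),Y(n,w)]$ applied to $N$ vanishes after multiplying by a suitable power of $(z-w)$ killing the pole of $u(z) - u(w)$. Uniqueness follows because $V$ is generated as a vertex algebra by $U$ together with $n$, and all products $a_{(j)}b$ are forced by the given formulas and the $\C[T]$-module structure.

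For the second part, I would compute the $\lambda$-bracket, equivalently the pseudobracket, explicitly. From (\ref{FN}), $[a(t)_\lambda n] = \sum_n \frac{\lambda^n}{n!}a(t)_{(n)}n$, and reading off the coefficients of $Y(a(t),z)n = a(z)n = \sum_j \frac{a^{(j)}(0)}{j!}z^j n$ one gets $a(t)_{(n)}n = \frac{a^{(n+1)}(0)}{(n+1)!}\,(\text{up to sign/index conventions})$ times $n$---in any case, a scalar multiple of $n$ depending on the Taylor coefficients of $a$ at $0$. The key point is that $[U, N] \subseteq N$ and moreover $[U, N] = N$: taking $a(t) = t^{-1}$ (or an appropriate power of $t$) one produces $u_{(n)}n$ equal to a nonzero scalar times $n$, so $N = [U,N] \subseteq [V^\lie, N]$. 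Combined with $[N,N]=0$ from (\ref{NN}) and the commutativity of $U^\lie$, one gets $(V^\lie)^{[1]} = [V^\lie, V^\lie] = [U,N] + [U,U] + [N,N] = N$, and then inductively $(V^\lie)^{[k+1]} = [V^\lie, (V^\lie)^{[k]}] = [V^\lie, N] = N$ for all $k \geq 1$. Hence the central series stabilizes to $N$, which is nonzero, so $V^\lie$ is not nilpotent.

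The main obstacle I anticipate is the careful verification of the locality axiom for mixed pairs and, relatedly, making sure the formula (\ref{FN}) is consistent---i.e., that defining $Y(a(t),z)n = a(z)n$ really does extend to a well-defined $\C$-linear state-field correspondence compatible with the $\C[T]$-module structure on $V$ (one must check $Y(\partial a,z)n$ matches $\partial$ applied to $Y(a,z)n$, and that the associativity/Borcherds identity holds when one of the inputs is $n$). A clean way around grinding through the Borcherds identity directly is to invoke the standard criterion: a $\C[T]$-module with vacuum and a collection of pairwise local fields closing under all $n$-th products generates a unique vertex algebra; here the fields $\{Y(a(t),z) : a(t) \in U\} \cup \{Y(n,z)\}$ are pairwise local by the computation above and their $n$-products stay within the span, so the existence and uniqueness follow from this reconstruction-type theorem. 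Then the central series computation is entirely mechanical once the $\lambda$-bracket is written down.
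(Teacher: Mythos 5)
Your second half (the central series computation) is correct and essentially identical to the paper's: $[N,N]=0$ and $[U,U]=0$ since $U$ is commutative, while $a(t)=t^{-1}$ gives $t^{-1}_{(0)}n=n$, so $[V,V]=N=[V,N]$ and the series stabilizes at $N$. The gap is in the existence part, specifically locality for the mixed pair. The commutator $[Y(a(t),z),Y(n,w)]$ restricted to $N$ is in fact \emph{identically} zero (no power of $z-w$ is needed): $Y(n,w)$ annihilates all of $N$ by translation covariance, and $Y(a(t),z)$ maps $N$ into $N((z))$. What you never address is the action of this commutator on $U$, and that is where the whole difficulty lives. To even state it you must first define $Y(n,w)$ on $U$, which is not part of the data \eqref{FF}, \eqref{NN}, \eqref{FN}; it is forced by skew-commutativity, $Y(n,w)b(t)=e^{w\partial}Y(b(t),-w)n=b(-w)e^{w\partial}n$ (and similarly $Y(a(t),z)$ on $\partial^K n$ is forced by translation covariance). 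Then one finds
\begin{equation*}
Y(a(t),z)\bigl(Y(n,w)b(t)\bigr)=i_{|w|<|z|}\,a(z-w)\,b(-w)\,e^{w\partial}n,\qquad
Y(n,w)\bigl(Y(a(t),z)b(t)\bigr)=i_{|z|<|w|}\,a(z-w)\,b(-w)\,e^{w\partial}n,
\end{equation*}
so the commutator on $U$ is a difference of two expansions of the same expression, killed by $(z-w)^N$ exactly when $t^N a(t)$ has no negative powers of $t$. This computation is the heart of the paper's proof and is absent from your proposal; your heuristic about ``killing the pole of $u(z)-u(w)$'' is aimed at the wrong subspace.

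Your suggestion to invoke a reconstruction/existence theorem (pairwise local fields generating $V$ from the vacuum) is a legitimate alternative to checking the axioms by hand, and it does spare you any direct Borcherds-identity verification --- but it does not spare you the step above. To generate $n$ from the vacuum you must include $Y(n,z)$ among the generating fields, defined as a field on all of $V$ (hence on $U$, via the skew-symmetry formula), and the theorem's hypothesis is pairwise locality of these fields as operators on all of $V$. So the claim that the fields ``are pairwise local by the computation above'' is unsupported precisely at the mixed pair acting on $U$. Once you add the definition of $Y(n,w)$ on $U$ and the two-expansion locality computation, either your reconstruction route or the paper's direct verification of vacuum, translation invariance and locality goes through; similarly, your uniqueness argument should cite skew-commutativity (rather than only the given formulas and the $\kk[\partial]$-module structure) as what forces $n_{(j)}b(t)$.
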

\begin{proof}
The unit element $\1 \in U$ satisfies the vacuum axiom, and all $Y(v, z), v \in V$ are fields by definition. Locality and translation invariance require some more effort. The skew-commutativity axiom suggests that we set
\begin{equation*}%\label{skecommcountex}
Y(n,z)a(t)=e^{z\partial}Y(a(t),-z)n=a(-z)e^{z\partial}n.
\end{equation*}
If further
\begin{equation*}%\label{trlinv}
Y(a(t),z) \partial^K n= \sum\limits_{i=1}^{K}{ K \choose i} {(-1)}^i a^{(i)}(z) \partial^{K-i}n,
\end{equation*}
then translation invariance is easily checked.

Let us move on to proving locality. First of all, notice that $Y(n, z)Y(n, w)$ maps every element of $V$ to $0$, hence $[Y(n, z), Y(n, w)] = 0$. Taking derivative with respect to $z$ and $w$, and using linearity, we obtain $[Y(u, z), Y(u', w)] = 0$ for all $u, u' \in N$.

Next, let us consider $[Y(a(t), z), Y(b(t), w)].$ An easy computation gives
\begin{equation*}
Y(a(t),z)(Y(b(t),w)\,\,n = Y(a(t),z)b(w)n = b(w)Y(a(t),z)n=a(z)b(w)n,
\end{equation*}
hence $[Y(a(t), z), Y(b(t), w)]n = 0,$ for all $a(t), b(t) \in U$. As
\begin{equation*}
\begin{array}{rl}
\partial ([Y(a(t), z), Y(b(t), w)]\,u) = & [Y(a'(t), z), Y(b(t), w)]\,u\\
+ & [Y(a(t), z), Y(b'(t), w)]\,u\\
+ & [Y(a(t), z), Y(b(t), w)]\,\partial u,
\end{array}
\end{equation*}
we conclude that $[Y(a(t), z), Y(b(t), w)]$ vanishes on all elements from $N$. However, it also vanishes on $U$, because of its vertex algebra structure.\\

We are left with showing that $[Y(a(t), z), Y(n, w)]$ is killed by a sufficiently large power of $z-w$. Let us compute
\begin{equation*}
\begin{array}{ll}
Y(a(t),z) (Y(n,w)b(t))&=Y(a(t),z)b(-w)e^{w\partial}n=b(-w) Y(a(t),z)e^{w\partial}n\\
& = b(-w)e^{w\partial} (e^{-w\partial}Y(a(t),z)e^{w\partial})n\\
&=e^{w\partial}b(-w) Y(a(t),z-w)n\\
&=i_{|w|<|z|} a(z-w) b(-w)e^{w\partial}n,\\
%\mbox{ and \hfill}&\\
Y(n,w) (Y(a(t),z) b(t))&=Y(n,w) i_{|z|<|t|} a(t+z)b(t)\\
&=i_{|z|<|t|} Y(n,w) a(t+z)b(t)\\
& = i_{|z|<|w|} a(z-w)b(-w)e^{w\partial}n.
\end{array}
\end{equation*}
Therefore,
\begin{equation*}
(z-w)^N [Y(a(t),z),Y(n,w)]b(t)= (i_{|w|<|z|} -i_{|z|<|w|})((z-w)^N a(z-w)b(-w)e^{w\partial}n)
\end{equation*}
is zero as soon as $t^N a(t)$ has no negative powers of $t$. As $[Y(a(t), z), Y(n, w)]$ maps every element of $N$ to zero, locality is then proved.\\

As for $V^{[\infty]}=N$, let $a=a(t)=\sum\limits_{n \in \mb Z} a_n t^{-n-1} \in U$ such that $a(t)$ contains some negative power of $t$. Then there exists $n \geq 0$ such that $a_n \neq 0$, hence $a(t) _{(n)} n=a_n \cdot n\neq0$. Therefore $[a, N] = N$, hence $N \subset V^{[n]}$ for all $n$. However, $(V/N)^\lie$ is nilpotent, hence $V^{[\infty]} \subset N$.
\end{proof}
Let us choose a finite subalgebra of $U$ whose conformal adjoint action on $N$ has nonzero weights, i.e., containing some element $a(t) \not\in \mb C[[t]]$.

\begin{ex}\label{counterex}
$M=\mb C[t^{-1}]\ltimes N \subset U \ltimes N$ is a finite vertex algebra, as it is generated over $\CD$ by $t^{-1}$, $1$ and $n$. However, $M^{\lie}$ is not nilpotent.
\end{ex}
We conclude by observing that even though the nil-ideal $N$ in the decomposition stated in Theorem \ref{fVAdec} is canonically determined, the subalgebra $U$ need not be. Indeed there may be several possible choices of $U$ as the following construction shows.

Let $M$ be as in Example \ref{counterex}, and choose $u \in N$. We know that $u_{(0)}$ is a derivation of $M$, and as $N$ is an abelian ideal of $M$, we immediately obtain $u_{(0)}^2=0$. Recall that the exponential of such a nilpotent derivation of a vertex algebra $M$ gives an {\em inner} automorphism of $M$.

If we choose $u = kn, k \in \kk$, then $\exp(kn_{(0)})(t^{-1})=t^{-1} -kn$. Thus, if we set $\psi=\exp(kn_{(0)})$, we obtain $\psi (N)=N$, $\psi(U)=\Cd(t^{-1}-kn)\oplus \mb C 1$, and $\psi (U)\cap N=\psi (U\cap N)=0$. We conclude that $\psi(U)$ is another subalgebra of $M$ which complements $N$.
Notice that in this example all Cartan subalgebras can be showed to be conjugated by an inner automorphism of $V$. It is not clear whether this holds in general.

One final comment is in order: it is easy to show that the Lie conformal subalgebra of $M^\lie$ generated by the element $a = t^{-1} + \partial n$ is solvable and equals $\Cd a + \Cd n$. As $[a_\lambda a] = (\partial + 2 \lambda) n$, we see that all elements $t^{-1} - kn, k \in \kk$ are modifications of $a$, and they generate nilpotent subalgebras of $M^\lie$. However, they $\Cd$-linearly span all of $\langle a \rangle$, whose Lie conformal algebra structure is solvable but not nilpotent. As the adjoint homomorphism $\ad: M^\lie \to \gc M$ is injective on $\langle a \rangle$, we conclude that there is no single nilpotent subalgebra of $\gc M$ containing all of the above modifications of $a$.

\vfill
\end{document}